\documentclass[12pt,a4paper]{amsart}
%
\usepackage[UKenglish]{babel}
\usepackage[utf8]{inputenx}
\usepackage[mathscr]{eucal}
\usepackage{indentfirst}
\usepackage{newlfont}
\usepackage{verbatim}
\usepackage{lmodern,textcomp}
\usepackage{fp}
\usepackage{booktabs}
\usepackage{ragged2e}
\usepackage{longtable}
\usepackage{epic}
\usepackage[margin=2.9cm]{geometry}
\usepackage{epstopdf} 
\usepackage{multicol}
\usepackage{graphicx}
\usepackage{pict2e}
\usepackage{caption}
\usepackage{fancyhdr}
\usepackage{wrapfig}
\usepackage{subfigure}
\usepackage{xcolor}
\usepackage{colortbl}
\usepackage[most]{tcolorbox}
\usepackage{epstopdf}
\usepackage{empheq} 
\usepackage{amsfonts}
\usepackage{amsthm}
\numberwithin{equation}{section}
\usepackage{amsmath}
\usepackage{amscd}
\usepackage{amssymb}
\usepackage{mathtools}
\usepackage{leftidx}
\usepackage{mathabx}
\usepackage{accents}
\usepackage[shortlabels]{enumitem}
\usepackage[makeroom]{cancel}
\usepackage{multirow,bigdelim}
\usepackage{array}
\usepackage{hhline}
\usepackage{mathptmx}
\usepackage{calc}
\usepackage{hyperref} 
\usepackage{tikz-cd}
\usepackage{tikz}
\usetikzlibrary{positioning,tikzmark,calc,,arrows,shapes,decorations.pathreplacing}
\tikzset{every picture/.style={remember picture}}
\usetikzlibrary{shadows,arrows}
\pgfdeclarelayer{background}
\pgfdeclarelayer{foreground}
\pgfsetlayers{background,main,foreground}
\tikzstyle{materia}=[draw, fill=blue!20, text width=6.0em, text centered,  minimum height=1.5em,drop shadow]
\tikzstyle{practica} = [materia, text width=8em, minimum width=10em,  minimum height=3em, rounded corners, drop shadow]
\tikzstyle{appe} = [materia, text width=8em, minimum width=10em,  minimum height=3em, rounded corners, drop shadow]
\tikzstyle{texto} = [above, text width=6em, text centered]
\tikzstyle{linepart} = [draw, thick, color=black!50, -latex', dashed]
\tikzstyle{line} = [draw, thick, color=black!50, -latex']
\tikzstyle{ur}=[draw, text centered, minimum height=0.01em]
\definecolor{orcidlogocol}{HTML}{A6CE39}
\makeatletter
\def\@tocline#1#2#3#4#5#6#7{\relax
  \ifnum #1>\c@tocdepth 
  \else
    \par \addpenalty\@secpenalty\addvspace{#2}%
    \begingroup \hyphenpenalty\@M
    \@ifempty{#4}{%
      \@tempdima\csname r@tocindent\number#1\endcsname\relax
    }{%
      \@tempdima#4\relax
    }%
    \parindent\z@ \leftskip#3\relax \advance\leftskip\@tempdima\relax
    \rightskip\@pnumwidth plus4em \parfillskip-\@pnumwidth
    #5\leavevmode\hskip-\@tempdima
      \ifcase #1
       \or\or \hskip 1em \or \hskip 2em \else \hskip 3em \fi%
      #6\nobreak\relax
    \dotfill\hbox to\@pnumwidth{\@tocpagenum{#7}}\par
    \nobreak
    \endgroup
  \fi}
\makeatother
%



\newcommand{\mycomment}[1]{%
}

\newcommand*{\norm}[1]{\left\lVert#1\right\rVert}
\newcommand*{\vertiii}[1]{{\left\vert\kern-0.25ex\left\vert\kern-0.25ex\left\vert #1 \right\vert\kern-0.25ex\right\vert\kern-0.25ex\right\vert}}

\newcommand*{\mres}{\mathbin{\vrule height 1.6ex depth 0pt width 0.13ex\vrule height 0.13ex depth 0pt width 1.3ex}}
\makeatletter
\newcommand\mathcircled[1]{%
  \mathpalette\@mathcircled{#1}%
}
\newcommand\@mathcircled[2]{%
  \tikz[baseline=(math.base)] \node[draw,circle,inner sep=1pt] (math) {$\m@th#1#2$};%
}
\makeatother
 
\allowdisplaybreaks 
%
\setcounter{secnumdepth}{3}
\setcounter{tocdepth}{3}
\DeclareMathAlphabet{\mathcal}{OMS}{cmsy}{m}{n}
\DeclareMathOperator{\spt}{spt}

\DeclareMathOperator{\spn}{span}
\DeclareMathOperator{\sgn}{sgn}
\theoremstyle{plain}
\newtheorem{theor}{Theorem}[section] 
\newtheorem{lem}[theor]{Lemma} 
\newtheorem{cor}[theor]{Corollary}
\newtheorem{prop}[theor]{Proposition}

\newtheorem*{prop_nonumber}{Proposition}
\newtheorem*{cor*}{Corollary}
\newtheorem*{prop*}{Proposition}
\theoremstyle{definition}
\newtheorem{defin}[theor]{Definition}
\newtheorem*{defin_nonumber}{Definition}
\newtheorem*{defin*}{Definition}
\newtheorem{rem}[theor]{Remark}
\newtheorem{ex}[theor]{Example}
\newtheorem{obs}[theor]{Observation}
\newtheorem{no}[theor]{Notation}


\begin{document}
\title{Sub-Riemannian Currents and Slicing of Currents in the Heisenberg group $\mathbb{H}^n$}
\author[G. Canarecci]{Giovanni Canarecci}
\address{University of Helsinki \\ Department of Mathematics and Statistics \\
Helsinki, Finland} 
\email{giovanni.canarecci@helsinki.fi}
\keywords{Heisenberg, Rumin cohomology, Sub-Riemannian geometry, currents, slicing of currents, $\mathbb{H}$-regularity} 
\begin{abstract} 
This paper aims to define and study currents and slices of currents in the Heisenberg group $\mathbb{H}^n$.  
Currents, depending on their integration properties and on those of their boundaries, can be classified into subspaces and, assuming their support to be compact, we can work with currents of finite mass, define the notion of slices of Heisenberg currents and show some important properties for them.   
While some such properties are similarly true in Riemannian settings, others carry deep consequences because they do not include the slices of the middle dimension $n$, which opens new challenges and scenarios for the possibility of developing a compactness theorem.  
Furthermore, this suggests that the study of currents on the first Heisenberg group $\mathbb{H}^1$ diverges from the other cases, because that is the only situation in which the dimension of the slice of a hypersurface, $2n-1$, coincides with the middle dimension $n$, which triggers a change in the associated differential operator in the Rumin complex.
\end{abstract}
\maketitle
\tableofcontents


\section*{Introduction} 
The aim of this paper is to define and study currents and slices of currents in the Heisenberg group $\mathbb{H}^n$ to provide tools for developing a compactness theorem for such currents.\\
There exist many references for an introduction to the Heisenberg group; here we follow mainly sections 2.1 and 2.2 in \cite{FSSC} and sections 2.1.3 and 2.2 in \cite{CDPT}.   
The Heisenberg group $\mathbb{H}^n$, $n \geq 1$, is the $(2n+1)$-dimensional manifold $ \mathbb{R}^{2n+1}$ with a non-Abelian group product and the Carnot--Carath{\'e}odory distance (or the equivalent  Kor{\'a}nyi distance).   
Additionally, the Heisenberg group is a Carnot group of step $2$ with Lie algebra $\mathfrak{h} = \mathfrak{h}_1 \oplus \mathfrak{h}_2$. 
The \textit{horizontal} layer $\mathfrak{h}_1$ has a standard orthonormal basis of left invariant vector fields,  
$ X_j=\partial_{x_j} -\frac{1}{2} y_j \partial_t$ and $Y_j=\partial_{y_j} +\frac{1}{2} x_j \partial_t$ for $ j=1,\dots,n$,     
which hold the core property that $[X_j, Y_j] = \partial_t=:T $ for each $j$. 
$T$ alone spans the second layer $\mathfrak{h}_2$ and is called the \textit{vertical} direction.  
The Heisenberg group has a natural cohomology called Rumin cohomology (see Rumin \cite{RUMIN}), whose behaviour is significantly different from the standard de Rham one (see also \cite{GClicentiate}). In the Rumin cohomology, the complex is given not by one but by three operators, depending on the dimension:
\begin{defin_nonumber}[\ref{complexHn}]    
Given the definitions in Section \ref{sec:preliminaries}, the Rumin complex is given by
$$
0 \to \mathbb{R} \to C^\infty  \stackrel{d_Q}{\to} \frac{\Omega^1}{I^1}  \stackrel{d_Q}{\to}  \dots \stackrel{d_Q}{\to} \frac{\Omega^n}{I^n} \stackrel{D}{\to} J^{n+1}    \stackrel{d_Q}{\to} \dots   \stackrel{d_Q}{\to} J^{2n+1} \to 0,
$$
where $d$ is the standard differential operator and, for $k < n$,
$$
d_Q( [\alpha]_{I^k} ) :=  [d \alpha]_{I^{k+1}},
$$
while, for $k \geq n +1$,
$$
d_Q := d_{\vert_{J^k}}.
$$
The second order differential operator $D$ is defined as
$$
D( [\alpha]_{I^n} ) :=  d \left ( \alpha +  L^{-1} \left (- (d \alpha)_{\vert_{ {\prescript{}{}\bigwedge}^{n+1} \mathfrak{h}_1 }} \right ) \wedge \theta \right )
=  d \left ( \alpha +  \mathcal{L} (\alpha)  \wedge \theta \right ).
$$
\end{defin_nonumber}
In Section \ref{currents} we define the notion of current in the Heisenberg group and show how one can think them, only to fix the idea, as special Riemannian currents.   
Then we describe how a current $T$ can be written as integral with the notion of \emph{representability by integration}, denoted $T=\overrightarrow{T} \wedge \mu_T$, we define its mass $M(T)$ and show that finite mass implies representability while the two notions are equivalent if the current has compact support.  
Since the theory of currents has been first developed in the Riemannian setting,  understandably we refer to it as much as necessary to present concepts in a linear way. Specifically, we point out when some results can be compared to the Riemannian equivalent, citing the books of Federer (see Section 4.1 in \cite{FED}), Simon (\cite{SIMON}) and Morgan (see Chapter 4 in \cite{MORGAN1}). Another important reference is the 2007 work by Franchi, Serapioni and Serra Cassano (\cite{FSSC}).   \\
Currents, depending on their integration properties and on those of their boundaries, can be classified into subspaces. Particularly, in case we assume their support to be compact, we can work with currents of finite mass (see scheme below and figure \ref{fig:diagram_currents}); otherwise we need to consider currents with only locally finite mass (see figure \ref{fig:diagram_currents_lfm}).
\begin{figure*}[!ht]
\centering
\begin{empheq}[box=\fbox]{alignat*=8}
& \underset{\substack{\mathbb{H}\text{-reg. integral currents}}}{   I_{\mathbb{H},k}(U)  } & \subseteq & \underset{\substack{\mathbb{H}\text{-regular currents}}}{    \mathcal{R}_{\mathbb{H},k} (U) } & & \\
& \quad \quad \quad  \rotatebox[origin=c]{270}{$\subseteq$} & &  \quad \quad \quad   \rotatebox[origin=c]{270}{$\subseteq$}  & & \\
 & \underset{\substack{\mathbb{H}\text{-rect. integral currents}}}{I_{\mathbb{H}\text{-rect},k}(U)} & \subseteq & \underset{\substack{\mathbb{H}\text{-rectifiable currents}}}{\mathcal{R}_{\mathbb{H}\text{-rect},k}(U)} &  &   \\
&  \quad \quad \quad   \rotatebox[origin=c]{270}{$\subseteq$}  & & \quad \quad \quad    \rotatebox[origin=c]{270}{$\subseteq$}      & &     \\
&  \underset{\substack{\mathbb{H}\text{-normal currents}}}{   N_{\mathbb{H},k}(U) }     & \subseteq      &       \underset{\substack{\emph{currents with finite mass}}}{ R_{\mathbb{H},k}(U) }         &   \subseteq  &  \underset{\substack{\emph{currents with compact support}}}{ \mathcal{E}_{\mathbb{H},k}(U) }     \subseteq    \underset{\substack{\text{Rumin Currents}}}{ \mathcal{D}_{\mathbb{H},k}(U) }  
\end{empheq}
\end{figure*}

In Section \ref{section:slicing}, we define the notion of slices of Heisenberg currents and show some important properties for them. Slices are defined as follows:
\begin{defin_nonumber}[\ref{defin:slices}]
Consider an open set $U \subseteq \mathbb{H}^n$, $f \in Lip(U,\mathbb{R})$, $t \in \mathbb{R}$ and $T\in \mathcal{D}_{\mathbb{H},k}(U)$. We define \emph{slices of $T$} the following two currents:
\begin{align*}
\langle T,f,t+ \rangle :=\left ( \partial T \right ) \mres \{ f>t \} -  \partial \left ( T \mres \{ f>t \} \right ),\\
\langle  T,f,t - \rangle :=    \partial \left ( T \mres \{ f < t \} \right ) -  \left (  \partial T  \right ) \mres \{ f <t \}.
\end{align*}
\end{defin_nonumber}
In propositions \ref{first4properties} and \ref{next3properties}, we show seven properties for slices of Heisenberg currents. Specifically, Proposition \ref{first4properties} holds properties similarly true in Riemannian settings (compare with 4.2.1 in \cite{FED}) and we do not see an explicit use of the sub-Riemannian geometry in the proofs:
\begin{prop_nonumber}[\ref{first4properties}]
Consider an open set $U \subseteq \mathbb{H}^n$,  $T\in N_{\mathbb{H},k}(U)$, $f \in Lip(U,\mathbb{R})$, and $t \in \mathbb{R}$. Then we have the following properties:
\begin{enumerate}[(1)]
  \setcounter{enumi}{-1}
\item
$\left (  \mu_T + \mu_{\partial T}   \right ) \left (  \{ f=t \}  \right ) =0 \quad \quad \text{for all } t \text{ but at most countably many}.$
\item
$\langle T,f,t+ \rangle =\langle T,f,t- \rangle  \quad \quad \text{for all } t \text{ but at most countably many}.$
\item
$\spt \langle  T,f,t+ \rangle \subseteq f^{-1} \{t\}   \cap \spt T.$
\item
$ \partial  \langle  T,f,t+ \rangle   = -  \langle \partial  T,f,t+ \rangle    .$
\end{enumerate}
\end{prop_nonumber}
On the other hand, the proof of Proposition \ref{next3properties}, containing the remaining properties, is way more complex than in the Riemannian case and requires to explicitly work with the Rumin cohomology (see Lemma \ref{L2} in particular).
\begin{prop_nonumber}[\ref{next3properties}]
Consider an open set $U \subseteq \mathbb{H}^n$, $T\in N_{\mathbb{H},k+1}(U)$, $f \in Lip(U,\mathbb{R})$, $t \in \mathbb{R}$ and $k\neq n$. Then the following properties hold:
\begin{enumerate}[(1)]
  \setcounter{enumi}{3}
\item
$M \left ( \langle T,f,t+ \rangle \right ) \leq    \ Lip(f) \liminf\limits_{h \rightarrow 0+} \frac{1}{h} \mu_T \left (   U \cap \{ t < f < t+h \}  \right )$.
\item
$\int_a^b M \left ( \langle T,f,t+ \rangle \right ) dt \leq    \ Lip(f) \mu_T \left (   U \cap \{a < f < b \}  \right ), \quad a,b \in \mathbb{R}$.
\item
$ \langle  T,f,t+ \rangle \in N_{\mathbb{H},k}(U)  \quad  \text{for a.e. } t$.
\end{enumerate}
\end{prop_nonumber}
Proposition \ref{next3properties} carries deep consequences for the possibility of developing a compactness theorem for currents in the Heisenberg group because it does not include the slices of the middle dimension $k=n$, which opens new challenges and scenarios.  \\
Furthermore, this suggests that the study of currents on the first Heisenberg group $\mathbb{H}^1$ diverges from the other cases, because that is the only situation in which the dimension of the slice of a hypersurface, $2n-1$, coincides with the middle dimension $n$, which triggers a change in the associated differential operator in the Rumin complex.   %
Our future studies will focus, on one side, on the manipulation of the second order differential operator $D$ in the case of the first Heisenberg group $\mathbb{H}^1$ and, on the other side, on slices of currents with dimension different from $n$ for general $n\neq 1$. The case $k=n$ is also subject to ongoing research.\\\\
\noindent
{\bf Acknowledgments.}  
I would like to thank my adviser, university lecturer Ilkka Holopainen, for the work done together and the time dedicated to me.   
 I also want to thank professors Bruno Franchi, Raul Serapioni, Pierre Pansu and academy research fellow Tuomas Orponen for the stimulating discussions.


\section{Preliminaries}\label{sec:preliminaries}

In this section we introduce the Heisenberg group $\mathbb{H}^n$, its structure as a Carnot group
and the standard bases of vector fields and differential forms.   
There exist many good references for such an introduction and we follow mainly sections 2.1 and 2.2 in \cite{FSSC} and sections 2.1.3 and 2.2 in \cite{CDPT}.  
We also describe briefly the Rumin cohomology and complex; more detail descriptions can be found, for example, in \cite{RUMIN}, \cite{TRIP} and \cite{GClicentiate}.

\subsection{The Heisenberg Group $\mathbb{H}^n$}\label{defH}

\begin{defin}\label{Heisenberg_Group}             
The $n$-dimensional \emph{Heisenberg Group} $\mathbb{H}^n$ is defined as 
$
\mathbb{H}^n:= (\mathbb{R}^{2n+1}, * ),
$ 
where $*$ is the product
$$
(x,y,t)*(x',y',t') := \left  (x+x',y+y', t+t'- \frac{1}{2} \langle J
 \begin{pmatrix} 
x \\
y
\end{pmatrix} 
, 
 \begin{pmatrix} 
x' \\
y' 
\end{pmatrix} \rangle_{\mathbb{R}^{2n}} \right  ),
$$
with $x,y,x',y' \in \mathbb{R}^n$, $t,t' \in \mathbb{R}$ and $J=  \begin{pmatrix} 
 0 &  I_n \\
-I_n & 0
\end{pmatrix} $. 
It is common to write $x=(x_1,\dots,x_n) \in \mathbb{R}^n$. Furthermore, with a simple computation of the matrix product, we immediately have that
$$
(x,y,t)*(x',y',t') := \left  (x+x',y+y', t+t' + \frac{1}{2} \sum_{j=1}^n \left ( x_j y_j'  -  y_j x_j'  \right ) \right  ).
$$
\end{defin}

\noindent
One can verify that the Heisenberg group $\mathbb{H}^n$ is a Lie group, meaning that the internal operations of product and inverse are both differentiable.   
In the Heisenberg group $\mathbb{H}^n$ there are two important groups of automorphisms; the first one is the left translation 
\begin{align*}
\tau_q : \mathbb{H}^n  \to \mathbb{H}^n, \ p \mapsto q*p,
\end{align*}
and the second one is the ($1$-parameter) group of the \emph{anisotropic dilations $\delta_r$}, with $r>0$:
\begin{align*}
\delta_r : \mathbb{H}^n   \to \mathbb{H}^n, \ (x,y,t)  \mapsto (rx,ry,r^2 t).
\end{align*}

\noindent
On the Heisenberg group  $\mathbb{H}^n$ we can define different equivalent distances: the Kor\'anyi and the Carnot--Carath\'eodory distance.

\begin{defin}
\label{norm}
We define the \emph{Kor\'anyi} distance on $\mathbb{H}^n$ by setting, for $p,q \in \mathbb{H}^n$,
$$
d_{\mathbb{H}} (p,q) :=  \norm{ q^{-1}*p }_{\mathbb{H}},  
$$
where $ \norm{ \cdot }_{\mathbb{H}}$ is the \emph{Kor\'anyi}  norm
$  
\norm{(x,y,t)}_{\mathbb{H}}:=\left (  |(x,y)|^4+16t^2  \right )^{\frac{1}{4}},
$  
with $(x,y,t) \in \mathbb{R}^{2n} \times  \mathbb{R} $ and $| \cdot |$ being the Euclidean norm.
\end{defin}

\noindent
The Kor\'anyi distance is left invariant, meaning  
$ 
d_{\mathbb{H}} (p*q,p*q')=d_{\mathbb{H}} (q,q')$  for  $p,q,q' \in \mathbb{H}^n,
$
and homogeneous of degree $1$ with respect to $\delta_r$, meaning 
$
d_\mathbb{H} \left ( \delta_r (p), \delta_r (q)  \right ) = r d_{\mathbb{H}} (p,q) $,  for  $ p,q \in \mathbb{H}^n$ and $ r>0.
$

\noindent
Furthermore, the \emph{Kor\'anyi} distance is equivalent to the \emph{Carnot--Carath\'eodory} distance $d_{cc}$, which is measured along curves whose tangent vector fields are horizontal.


\subsection{Left Invariance and Horizontal Structure on $\mathbb{H}^n$}\label{lefthor}

The standard basis of vector fields in the Heisenberg group $\mathbb{H}^n$ gives it the structure of Carnot group. By duality, we also introduce its standard basis of differential forms.

\begin{defin}
\label{XYT}
The standard basis of left invariant vector fields in $\mathbb{H}^n$
consists of the following: 
 $$
\begin{cases}
X_j &:= \partial_{x_j} - \frac{1}{2} y_j\partial_{t} \quad \emph{\emph{ for }}  j=1,\dots,n , \\
Y_j &:= \partial_{y_j} + \frac{1}{2} x_j\partial_{t} \quad \emph{\emph{ for }}  j=1,\dots,n,  \\
T &:= \partial_{t}.
\end{cases}
 $$
\end{defin}

\noindent   
One can observe that $\{ X_1,\dots,X_n,Y_1,\dots,Y_n,T \}$ becomes $\{ \partial_{x_1},\dots, \partial_{x_n}, \partial_{y_1},\dots,\partial_{y_n}, \partial_{t} \}$ at the neutral element. Another easy observation is that the only non-trivial commutators of the vector fields $X_j,Y_j$ and $T$ are   
$
[X_j,Y_j]=T  $, 
for $j=1,\dots,n.
$   
This immediately tells that all higher-order commutators are zero and that the Heisenberg group is a Carnot group of step $2$. Indeed we can write its Lie algebra $\mathfrak{h}$ as 
$
\mathfrak{h} =\mathfrak{h}_1 \oplus \mathfrak{h}_2,
$ 
with
$$
\mathfrak{h}_1 = \spn \{ X_1,  \ldots, X_n, Y_1, \ldots, Y_n \} \quad \text{and} \quad \mathfrak{h}_2 =\spn \{ T \}.
$$
Conventionally one calls $\mathfrak{h}_1$ the space of \emph{horizontal} and $\mathfrak{h}_2$ the space of \emph{vertical vector fields}.   
The vector fields $\{ X_1,\dots,X_n,Y_1,\dots,Y_n\}$ are homogeneous of order $1$ with respect to the dilation $\delta_r, \  r \in \mathbb{R}^+$, i.e.,
$$
X_j (f\circ \delta_r)=r X_j(f)\circ \delta_r \quad  \text{ and }   \quad   Y_j (f\circ \delta_r)=r Y_j(f)\circ \delta_r ,
$$
where $f \in C^1 (U, \mathbb{R} )$, $U\subseteq \mathbb{H}^n$ open and $j=1,\dots,n$. On the other hand, the vector field $T$ is homogeneous of order $2$, i.e.,
$$
T(f\circ \delta_r)=r^2T(f)\circ \delta_r.
$$
It is not a surprise, then, that the homogeneous dimension of $\mathbb{H}^n$ is $Q=2n+2.$

\noindent
The vector fields $X_1,\dots,X_n,Y_1,\dots,Y_n,T$ form an orthonormal basis of $\mathfrak{h}$ with a scalar product $\langle \cdot , \cdot \rangle $. In the same way, $X_1,\dots,X_n,Y_1,\dots,Y_n$ form an orthonormal basis of $\mathfrak{h}_1$ with a scalar product $\langle \cdot , \cdot \rangle_H $ defined purely on $\mathfrak{h}_1$.

\begin{no}\label{notW}
Sometimes it will be useful to consider all the elements of the basis of $\mathfrak{h}$ with one symbol; to do so, we write
$$
\begin{cases}
W_j &:= X_j \quad \text{ for } j=1,\dots,n,\\
W_{n+j} &:= Y_j  \quad \text{ for } j=1,\dots,n,\\
W_{2n+1}&:=T.
\end{cases}
$$
In the same way, the point $(x_1,\dots,x_n,y_1,\dots,y_n,t)$ will be denoted as $(w_1,\dots,w_{2n+1})$.
\end{no}

\begin{defin}
\label{dual_basis}
Consider the dual space of $\mathfrak{h}$, $ {\prescript{}{}\bigwedge}^1 \mathfrak{h}$, which inherits an inner product from $\mathfrak{h}$. By duality, one can find a dual orthonormal basis of covector fields $\{\omega_1,\dots,\omega_{2n+1}\}$ in $ {\prescript{}{}\bigwedge}^1 \mathfrak{h}$ such that 
$$
\langle \omega_j \vert W_k \rangle =
\delta_{jk}, \quad \text{for } j,k=1,\dots,2n+1,
$$
where $W_k$ is an element of the basis of $\mathfrak{h}$. 
Such covector fields are differential forms in the Heisenberg group.
\end{defin}

\noindent
The orthonormal basis of $ {\prescript{}{}\bigwedge}^1 \mathfrak{h}$ is given by  
$
\{dx_1,\dots,dx_n,dy_1,\dots,dy_n,\theta \},
$  
where $\theta$ is called \emph{contact form} and is defined as
$$
\theta :=dt - \frac{1}{2}  \sum_{j=1}^{n} (x_j d y_j-y_j d x_j).
$$

\begin{ex}\label{df} 
As a useful example, we show here that the just-defined bases of vectors and covectors behave as one would expect when differentiating. Specifically, consider $f: U\subseteq \mathbb{H}^n \to \mathbb{R}$, $U$ open, $f \in C^1 (U, \mathbb{R} )$, then one has:
\begin{align*}
 df
=&  \sum_{j=1}^{n}  \left ( X_j f d x_j + Y_j f dy_j \right ) + Tf \theta.
\end{align*}
\end{ex}

\begin{defin} \label{kdim}   
We define the sets of $k$-dimensional vector fields and differential forms, respectively, as:
\begin{align*}
\Omega_k \equiv {\prescript{}{}\bigwedge}_k \mathfrak{h} &:= \spn \{ W_{i_1} \wedge \dots \wedge W_{i_k} \}_{1\leq i_1 \leq \dots \leq i_k \leq 2n+1 },
\end{align*}
and
\begin{align*}
\Omega^k \equiv {\prescript{}{}\bigwedge}^k \mathfrak{h} &:= \spn \{ dw_{i_1} \wedge \dots \wedge dw_{i_k} \}_{1\leq i_1 \leq \dots \leq i_k \leq 2n+1 }.
\end{align*}
The same definitions can be given for $ \mathfrak{h}_1$ and produce the spaces $ {\prescript{}{}\bigwedge}_k \mathfrak{h}_1 $ and $ {\prescript{}{}\bigwedge}^k \mathfrak{h}_1 $.
\end{defin}

\begin{defin}[see 2.3 in \cite{FSSC}]\label{def:article5-6_star}
Consider a form $\omega \in  {\prescript{}{}\bigwedge}^k \mathfrak{h}$, with $k=1,\dots,2n+1$. We define $\omega^* \in  {\prescript{}{}\bigwedge}_k \mathfrak{h}$ so that
$$
\langle \omega^* , V   \rangle   =     \langle \omega \vert V   \rangle \quad \text{for all } V \in  {\prescript{}{}\bigwedge}_k \mathfrak{h}.
$$
\end{defin}

%

\noindent
Next we give the definition of Pansu differentiability for maps between Carnot groups $\mathbb{G}$ and $\mathbb{G}'$. After that, we state it in the special case of $\mathbb{G}=\mathbb{H}^n$ and $\mathbb{G}'=\mathbb{R}$.\\
We call a function $h : (\mathbb{G},*,\delta) \to (\mathbb{G}',*',\delta')$ \emph{homogeneous} if $h(\delta_r(p))= \delta'_r \left ( h(p) \right )$ for all $r>0$.

\begin{defin}[see \cite{PANSU} and 2.10 in \cite{FSSC}]\label{dGGG}
Consider two Carnot groups $(\mathbb{G},*,\delta)$ and $(\mathbb{G}',*',\delta')$. A function $f: U \to \mathbb{G}'$, $U \subseteq \mathbb{G}$ open, is \emph{P-differentiable} at $p_0 \in U$ if there is a (unique) homogeneous Lie group 
 homomorphism $d_H f_{p_0} : \mathbb{G} \to \mathbb{G}'$ such that
$$
d_H f_{p_0} (p) := \lim\limits_{r \to 0} \delta'_{\frac{1}{r}} \left ( f(p_0)^{-1} *' f(p_0* \delta_r (p) ) \right ),
$$
uniformly for $p$ in compact subsets of $U$.
\end{defin}

\begin{defin}\label{dHHH} 
Consider a function $f: U \to \mathbb{R}$, $U \subseteq \mathbb{H}^n$ open. $f$ is \emph{P-differentiable} at $p_0 \in U$ if there is a (unique) homogeneous Lie group 
 homomorphism $d_H f_{p_0} : \mathbb{H}^n \to \mathbb{R}$ such that
$$
d_H f_{p_0} (p) := \lim\limits_{r \to 0} \frac{  f \left (p_0* \delta_r (p) \right ) - f(p_0) }{r},
$$
uniformly for $p$ in compact subsets of $U$.
\end{defin}


\begin{defin}[see 2.11 in \cite{FSSC}]\label{veryfirstnabla} 
Consider a function $f$ P-differentiable  at $p \in U$, $f:U \to \mathbb{R}$, $U\subseteq \mathbb{H}^n$ open. 
The \emph{Heisenberg gradient} or \emph{horizontal gradient} of $f$ at $p$ is defined as
$$
\nabla_\mathbb{H} f(p) := \left ( d_H f_p \right )^* \in \mathfrak{h}_1,
$$
or, equivalently,
$$
\nabla_\mathbb{H} f(p) = \sum_{j=1}^{n} \left [  (X_j f)(p) X_j  + (Y_j f)(p) Y_j  \right ].
$$
\end{defin}

\begin{no}[see 2.12 in \cite{FSSC}]\label{CH1}
Sets of differentiable functions can be defined with respect to the P-differentiability. Consider $ U \subseteq \mathbb{G}$ and $V \subseteq \mathbb{G}'$ open, then  
$C_{\mathbb{H}}^1 (U, V)$ is the vector space of continuous functions $f:U \to V $  such that the P-differential $d_H f$ is continuous.
\end{no}

\noindent
To conclude this part, we define the Hodge operator which, given a vector field, returns a second one of dual dimension  
and orthogonal to the first.

\begin{defin}[see 2.3 in \cite{FSSC} or 1.7.8 in \cite{FED}]\label{hodge}
Consider $1 \leq k \leq 2n$. The \emph{Hodge operator} is the linear isomorphism
\begin{align*}
*: {\prescript{}{}\bigwedge}_k \mathfrak{h} &\rightarrow {\prescript{}{}\bigwedge}_{2n+1-k} \mathfrak{h} ,\\
\sum_I v_I V_I &\mapsto  \sum_I v_I (*V_I),
\end{align*}
where 
$
*V_I:=(-1)^{\sigma(I) }V_{I^*},
$ 
and, for $1 \leq  i_1 \leq \cdots \leq i_k \leq 2n+1$,
\begin{itemize}
\item $I=\{ i_1,\cdots,i_k \}$,
\item $V_I= V_{i_1} \wedge \cdots \wedge V_{i_k} $,
\item $I^*=\{ i_1^*,\dots,i_{2n+1-k}^* \}=\{1, \cdots, 2n+1\} \smallsetminus I \quad $  and
\item $\sigma(I)$ is the number of couples $(i_h,i_l^*)$ with $i_h > i_l^*$.
\end{itemize}
\end{defin}


\subsection{Rumin Cohomology in $\mathbb{H}^n$}

The Rumin cohomology is the equivalent of the Riemann cohomology but for the Heisenberg group. Its complex is given not by one but by three operators, depending on the dimension.

\begin{defin}\label{def_forms}  
Consider $0\leq k \leq 2n+1$ and recall $\Omega^k$ from Definition \ref{kdim}. We denote:
\begin{itemize}
\item
$I^k := \{ \alpha \wedge \theta + \beta \wedge d \theta ; \  \alpha \in \Omega^{k-1}, \ \beta \in \Omega^{k-2}  \}$,
\item
$J^k :=\{ \alpha \in \Omega^{k}; \  \alpha \wedge \theta =0, \  \alpha \wedge d\theta=0   \}$.
\end{itemize}
\end{defin}

\begin{no}[see 2.1.8 and 2.1.10 in \cite{GClicentiate}]\label{notationL}
We denote $L$ the operator
\begin{align*}
L:  {\prescript{}{}\bigwedge}^{n-1} \mathfrak{h}_1  \to {\prescript{}{}\bigwedge}^{n+1} \mathfrak{h}_1, \  \beta  \mapsto d \theta \wedge \beta .
\end{align*}
Furthermore we remind that, if $\gamma \in \Omega^{k-1}$, we can consider the equivalence class
$$
  {\prescript{}{}\bigwedge}^k \mathfrak{h}_1  =       \left \{  \beta \in \Omega^k ; \ \beta =0 \ \text{or} \ \beta \wedge \theta \neq 0 \right  \}  \cong  \frac{\Omega^k}{ \{ \gamma \wedge \theta \} }    ,
$$
where we write $\{ \gamma \wedge \theta \} = \{ \gamma \wedge \theta ; \ \gamma \in \Omega^{k-1} \}$ for short. The equivalence is given by $\beta \mapsto ( \beta)_{\vert_{ {\prescript{}{}\bigwedge}^{k} \mathfrak{h}_1 }}$.\\
In particular, $L$ is an isomorphism (see $2$ in \cite{RUMIN}) and we can denote
$$
\mathcal{L} (\alpha) :=  L^{-1}   \left ( - \left ( d  \alpha \right )_{\vert_{ {\prescript{}{}\bigwedge}^{n+1} \mathfrak{h}_1 }}  \right ) .
$$
%
%
\end{no}

\begin{no}\label{notation:IJ}
We denote by $ [\alpha]_{I^k}$ an element of the quotient $\frac{\Omega^k}{I^k}$ and  $\omega_{\vert_{ J^k }}$ an element of $J^k$ whenever  $\omega \in   \mathcal{D}^{k} (U)$. We will use this second definition later on.
\end{no}

\begin{defin}[Rumin complex]\label{complexHn}    
The Rumin complex, due to Rumin in \cite{RUMIN}, is given by
$$
0 \to \mathbb{R} \to C^\infty  \stackrel{d_Q}{\to} \frac{\Omega^1}{I^1}  \stackrel{d_Q}{\to}  \dots \stackrel{d_Q}{\to} \frac{\Omega^n}{I^n} \stackrel{D}{\to} J^{n+1}    \stackrel{d_Q}{\to} \dots   \stackrel{d_Q}{\to} J^{2n+1} \to 0,
$$
where $d$ is the standard differential operator and, for $k < n$,
$$
d_Q( [\alpha]_{I^k} ) :=  [d \alpha]_{I^{k+1}},
$$
while, for $k \geq n +1$,
$$
d_Q := d_{\vert_{J^k}}.
$$
The second order differential operator $D$ is defined as
$$
D( [\alpha]_{I^n} ) :=  d \left ( \alpha +  L^{-1} \left (- (d \alpha)_{\vert_{ {\prescript{}{}\bigwedge}^{n+1} \mathfrak{h}_1 }} \right ) \wedge \theta \right )
=  d \left ( \alpha +  \mathcal{L} (\alpha)  \wedge \theta \right ).
$$
These three different differential operators are at times denoted with the same syntax $d_c$ or $d_c^{(k)}$, when they act on $k$-forms (see Theorem 11.40 in \cite{TRIP} or Proposition B.7 in \cite{GClicentiate}).
\end{defin}


\section{Currents in the Heisenberg Group}\label{currents}

In this section we first define the notion of current in the Heisenberg group and expose its relationship with Riemannian currents.   
Then we describe how currents can be written as integrals with the notion of \emph{representability by integration}, define the mass of a current in $\mathbb{H}^n$ and show that finite mass implies representability and the two notions are equivalent if the current has compact support.   
Last, we classify currents into subspaces depending on the integration properties of themselves and their boundaries and 
 we work with currents with finite mass if the support is compact (see figure \ref{fig:diagram_currents}), while we 
 consider currents with only locally finite mass otherwise (see figure \ref{fig:diagram_currents_lfm}).  
In Riemannian geometry there are different kind of currents and the correlation between the different definitions is well known since Federer (see Section 4.1 in \cite{FED}); useful references are also the works of Simon (\cite{SIMON}) and Morgan (see Chapter 4 in \cite{MORGAN1}).   
Finally, for the Heisenberg group specifically, an important reference is the 2007 work by Franchi, Serapioni and Serra Cassano (\cite{FSSC}).

\begin{defin}[see 
5.8 in \cite{FSSC}]\label{def:RuminCurrents}
Consider an open set $U \subseteq \mathbb{H}^n$. 
We call $\mathcal{D}_{\mathbb{H}}^k (U)$ the space of compactly supported smooth sections on $U$ of, respectively, $\frac{\Omega^k}{I^k}$, if $1 \leq k \leq n$, and $J^{k}$, if $n+1 \leq k \leq 2n+1$. %
These spaces are topologically locally convex. %
For convenience, we call the elements of $\mathcal{D}_{\mathbb{H}}^k (U)$ \emph{Rumin} or \emph{Heisenberg differential forms}.\\
Furthermore, we call \emph{Rumin} or \emph{Heisenberg current} any continuous linear functional from the space $\mathcal{D}_{\mathbb{H}}^k (U)$ to $\mathbb{R}$ and we denote their set as $\mathcal{D}_{\mathbb{H},k} (U).$
\end{defin}

\noindent
We just saw in Definition \ref{def:RuminCurrents} that the Rumin currents are defined, for low dimensions, on quotient spaces. Nevertheless it is possible, to fix the ideas, to think about Rumin differential forms as a subset of the standard differential forms and so write $\mathcal{D}_{\mathbb{H}}^k(U)  \subseteq  \mathcal{D}^k(U)$ for simplicity. In the same way, we can think about Rumin currents as a subset of the Euclidean currents. Indeed, any Rumin current $T \in \mathcal{D}_{\mathbb{H},k} (U)$ can be identified with an Euclidean $k$-current $\widetilde T \in  \mathcal{D}_{k} (U)$ by setting, for $ \omega \in \mathcal{D}^{k} (U)$:\\
$$
\widetilde T (\omega) :
=
\begin{cases}
 T([\omega]_{I^k} ), 
\quad  \text{where } [\omega]_{I^k} \in \mathcal{D}_{\mathbb{H}}^{k} (U) =\frac{\Omega^k}{I^k},
\quad \text{if }1 \leq k \leq n,\\
 T \left (   \omega_{\vert_{ J^k }}    \right ),
\quad  \text{where }    \omega = \omega_{\vert_{ \left (J^k  \right )^\perp }}   +   \omega_{\vert_{ J^k }},
    \omega_{\vert_{ \left (J^k  \right )^\perp }} \notin  J^k  \text{ and }   \omega_{\vert_{ J^k }} \in   \mathcal{D}_{\mathbb{H}}^{k} (U)= J^k ,\\      
  \quad \quad \quad \quad \quad \quad \quad \quad \quad \quad \quad \quad \quad \quad \quad \quad \quad \quad \quad \quad \quad \quad \text{if } n+1 \leq k \leq 2n+1.
\end{cases}
$$

\begin{defin}[compare with 4.1.1 in \cite{FED}]\label{supportCurrent}
Consider an open set $U \subseteq \mathbb{H}^n$ and $T \in \mathcal{D}_{\mathbb{H}}^k (U)$. The \emph{support of a current T} is defined as
$$
\spt T :=  U \setminus \bigcup \{  V : V\subseteq U, \ V \text{ open}, \ T(\omega)=0 \text{ for all } \omega \in \mathcal{D}^k_{\mathbb{H}}  (U), \ \spt \omega \subseteq V   \},
$$
where $\spt \omega = \overline{\{ x\in U \ / \ \omega(x)\neq 0 \}}$. 

\end{defin}

\subsection{Representability by Integration and Masses in $\mathbb{H}^n$}
In the study of currents, it is often useful to be able to write a current as an integral. The first notion we see that allows us to do so is 
 \emph{representability by integration}. After that we define the mass of currents in $\mathbb{H}^n$ and show that finite mass implies representability and the two notions are equivalent if the current has compact support.   \\
Since the theory of currents has been first developed in the Riemannian setting,  understandably we refer to it as much as necessary to present concepts in a linear way. Specifically, we point out when some results can be compared to the Riemannian equivalent, citing the books of Federer (\cite{FED}), Simon (\cite{SIMON}) and Morgan (\cite{MORGAN1}). Another important reference is the 2007 work by Franchi et al. (\cite{FSSC}).

\begin{defin}[see 
 2.5 in \cite{FSSC}]\label{HLambda}
Recall Definitions \ref{kdim} and \ref{hodge}. For $0\leq k \leq n$, we denote
\begin{align*}
{\prescript{}{H}\bigwedge}_0& := \mathbb{R} ,\\
{\prescript{}{H}\bigwedge}_k &:= \left \{  v \in {\prescript{}{}\bigwedge}_k \mathfrak{h}_1 \ / \ v \text{ simple and integrable}     \right \},\\
{\prescript{}{H}\bigwedge}_{2n+1-k}  &:= *  \left (  {\prescript{}{H}\bigwedge}_k   \right ),
\end{align*}
where $v$ is \emph{integrable} if and only if the distribution associated to it is so. By duality, for $0\leq k \leq 2n+1$,
\begin{align*}
{\prescript{}{H}\bigwedge}^k := {\prescript{}{}\bigwedge}^1  \left (    {\prescript{}{H}\bigwedge}_k  \right ) = \left \{    \omega \in {\prescript{}{}\bigwedge}^k \mathfrak{h} \ / \ \omega^* \in {\prescript{}{H}\bigwedge}_k   \right \}.
\end{align*}
Note that, by Theorem 2.9 in \cite{FSSC}, the spaces ${\prescript{}{H}\bigwedge}^k $'s are the spaces of the Rumin cohomology. So the spaces of vector fields ${\prescript{}{H}\bigwedge}_k$'s are the dual of the Rumin differential forms.
\end{defin}

\begin{defin}\label{repbyint}
Consider an open set $U \subseteq \mathbb{H}^n$ and $T\in \mathcal{D}_{\mathbb{H},k} (U)$. We say that $T$ is \emph{ representable by integration}, and we write $T=\overrightarrow{T} \wedge \mu_T$, if there exist $\mu_T$ a Radon measure over $U$ and a vector $\overrightarrow{T} : U \to {\prescript{}{H}{\bigwedge}}_k$ $\mu_T$-meas. s.t. 
\begin{align*}
\begin{aligned}
\norm{\overrightarrow{T}(p)}=1 \quad &\text{ for } \mu_T\text{-a.a. } p \in U \quad \text{ and }\\
T(\omega)= \int \langle \omega(p) \vert \overrightarrow{T}(p) \rangle d \mu_T(p) \ \ \quad &\text{ for all }  \omega \in \mathcal{D}_{\mathbb{H}}^k (U). 
\end{aligned}
\end{align*}
%
\end{defin}





\noindent
Before we define the mass of a current, a clarification is necessary. 
In the standard theory of currents there are two different notion of \emph{mass} for a current: one made using the comass of differential forms (see 4.3 in \cite{MORGAN1} and 4.1.7 in \cite{FED}) and one using the norm given by the inner product of differential forms (see, for instance, 
2.6Ch6 in \cite{SIMON}). 
This is still true in our case.



\begin{defin}[mass of a current by the comass in $\mathbb{H}^n$]\label{masscomass}
Consider an open set $U \subseteq \mathbb{H}^n$ and $T \in \mathcal{D}_{\mathbb{H},k} (U)$. Denote the mass of a current $T$ defined by the comass as: 
$$
M(T)  := \sup    \left     \{    T(\omega) , \  \omega \in  \mathcal{D}_{\mathbb{H}}^k(U), \ \norm{\omega(p)}^* \leq 1  \ \forall p \right   \}  = \sup\limits_{\omega \in  \mathcal{D}_{\mathbb{H}}^k(U), \ \norm{\omega(p)}^* \leq 1 }       T(\omega)  ,
$$
with \emph{comass}
\begin{align*}
\norm{\omega (p)}^*   :&= \sup     \left     \{  
 \langle  \omega (p) \vert v \rangle  
 \ / \ v \text{ a unit, simple, integrable } k\text{-vector}         \right   \}  \\
&= \sup \left  \{       \omega_p (v) \ / \ v \in   {\prescript{}{H}\bigwedge}_k, \ \vert v \vert \leq 1  \right  \}.
\end{align*}
\end{defin}

\noindent
Other notations for the comass in the literature are $M(\omega)$ and $\norm{\omega(p)}$.

\begin{defin}[mass of a current by the scalar product in $\mathbb{H}^n$, see 
5.12 in \cite{FSSC}]\label{massFSSC}
Consider an open set $U \subseteq \mathbb{H}^n$ and $T \in \mathcal{D}_{\mathbb{H},k} (U)$. Denote the mass of a current $T$ defined by the scalar product as:  
$$
m(T):= \sup  \left  \{ T(\omega) \ / \ \omega \in \mathcal{D}_{\mathbb{H}}^k(U), \vert \omega \vert \leq 1 \right \}
$$
with $\vert \omega \vert = \sqrt{ \langle \omega , \omega \rangle}$, where $\langle \cdot , \cdot \rangle$ is the Riemannian scalar product that makes the differential forms $dx_j, dy_j$'s and $\theta$ orthonormal. 
\end{defin}

%

\noindent
The comass is smaller or equal than the scalar product norm (see also 2.6Ch6 in \cite{SIMON}), which means that the mass defined with the comass is bigger or equal than the one defined with the scalar product:
$$
m(T)  \leq M(T) \quad  \text{for all }  T \in \mathcal{D}_{\mathbb{H},k}(U).
$$

\noindent
Finally we state the correlation between mass and currents representable by integration (compare with 4.1.7 in \cite{FED} and 2.8Ch6 in \cite{SIMON}).

\begin{prop}\label{masstoint}
Consider an open set $U \subseteq \mathbb{H}^n$ and $T \in \mathcal{D}_{\mathbb{H},k} (U)$. Then
$M(T) < \infty$ implies that $T=\overrightarrow{T} \wedge \mu_T$  and  $\mu_T(U)=M(T).$
%
\end{prop}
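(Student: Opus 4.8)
The plan is to convert the finite-mass hypothesis into a continuity estimate, extend $T$ to continuous sections, and then invoke a vector-valued Riesz representation theorem to produce $\mu_T$ and $\overrightarrow{T}$ simultaneously. The first step is the fundamental estimate
$$ |T(\omega)| \le M(T)\,\sup_{p\in U}\norm{\omega(p)}^* \qquad \text{for all } \omega\in\mathcal{D}_{\mathbb{H}}^k(U), $$
which follows at once from Definition \ref{masscomass} by homogeneity: writing $c:=\sup_p\norm{\omega(p)}^*$, if $c>0$ then $\omega/c$ has pointwise comass at most $1$, so applying the defining supremum of $M(T)$ to $\pm\omega/c$ gives $|T(\omega)|\le cM(T)$, while $c=0$ forces $T(\omega)=0$.

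Next I would use this estimate, which expresses continuity of $T$ for the supremum comass norm $\norm{\omega}_\infty^*:=\sup_p\norm{\omega(p)}^*$, to extend $T$. By mollification the smooth Rumin forms $\mathcal{D}_{\mathbb{H}}^k(U)$ are dense, in this norm and with uniform control of supports, in the space $C_c^0(U;{\prescript{}{H}\bigwedge}^k)$ of continuous compactly supported sections of the bundle ${\prescript{}{H}\bigwedge}^k$; hence $T$ extends uniquely to a bounded functional there with the same norm. Since ${\prescript{}{H}\bigwedge}^k$ is a finite-dimensional Hilbert space whose dual is exactly ${\prescript{}{H}\bigwedge}_k$ (Definition \ref{HLambda}), the general Riesz representation theorem for vector-valued functionals ($2.5.12$ in \cite{FED}) then yields a Radon measure $\mu_T$ over $U$ and a $\mu_T$-measurable field $\overrightarrow{T}:U\to{\prescript{}{H}\bigwedge}_k$ with $\norm{\overrightarrow{T}(p)}=1$ for $\mu_T$-a.a.\ $p$ and $T(\omega)=\int\langle\omega(p)\vert\overrightarrow{T}(p)\rangle\,d\mu_T(p)$; restricting back to smooth $\omega$ is precisely Definition \ref{repbyint}, so $T=\overrightarrow{T}\wedge\mu_T$.

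It then remains to identify $\mu_T(U)=M(T)$, which I would do by two inequalities. For one direction, any $\omega$ with $\norm{\omega(p)}^*\le 1$ satisfies $T(\omega)=\int\langle\omega\vert\overrightarrow{T}\rangle\,d\mu_T\le\int\norm{\omega}^*\,d\mu_T\le\mu_T(U)$ because $\norm{\overrightarrow{T}}=1$ a.e., and taking the supremum gives $M(T)\le\mu_T(U)$. For the reverse, I would approximate $\overrightarrow{T}$ by test forms $\omega_j$ of comass at most $1$ with $\langle\omega_j\vert\overrightarrow{T}\rangle\to 1$ pointwise $\mu_T$-a.e.; dominated convergence then gives $T(\omega_j)\to\mu_T(U)$, so $M(T)\ge\mu_T(U)$.

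The hard part will be the representation step: producing a representing field that is normalized to unit length $\mu_T$-a.e.\ and, more importantly, takes values in the Heisenberg-specific space ${\prescript{}{H}\bigwedge}_k$ of simple integrable $k$-vectors rather than in all of ${\prescript{}{}\bigwedge}_k\mathfrak{h}$. This is exactly where the identification in Definition \ref{HLambda} of ${\prescript{}{H}\bigwedge}_k$ as the dual of the Rumin forms is essential, so that the abstract theorem lands automatically in the correct space. The density claim in the extension step also warrants care, since the mollification must preserve membership in the Rumin subbundle; this holds because the fibres are constant in the left-invariant frame, so convolution acts coefficientwise and keeps sections within ${\prescript{}{H}\bigwedge}^k$.
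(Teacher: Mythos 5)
Your proposal is correct and follows essentially the same route as the paper: the paper's argument likewise turns $M(T)<\infty$ into the domination $|T(\omega)|\le M(T)\sup_p\norm{\omega(p)}^*$, extends $T$ to continuous compactly supported sections (by Hahn--Banach rather than your mollification, an immaterial difference), invokes the vector-valued Riesz representation theorem to obtain $\mu_T$ and a unit field $\overrightarrow{T}$ valued in ${\prescript{}{H}\bigwedge}_k$, restricts back to smooth Rumin forms, and identifies $\mu_T(U)=M(T)$ from the Riesz construction. Both write-ups leave the same details implicit (notably that the representing field genuinely lands in the set of simple integrable $k$-vectors), so there is nothing to flag beyond that shared level of informality.
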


\noindent
The proof is based on Riesz Representation Theorem and it is not dissimilar from the same proof in the Riemannian setting.


\begin{cor}\label{masses222}
Consider an open set $U \subseteq \mathbb{H}^n$ and $T \in \mathcal{D}_{\mathbb{H},k} (U)$. Then $m(T) < \infty$ implies that
 $T=\overrightarrow{T_m} \wedge  \mu_{T,m}$  and    $\mu_{T,m}(U)=m(T)$, where $\mu_{T,m}$ is the Radon measure relative to the mass $m$.\\
In particular (compare with 2.6Ch6 and 4.14Ch1 in \cite{SIMON}), if $M(T) < \infty$, then both masses are finite, $\mu_T$ is unique,  
 $\overrightarrow{ T}= \overrightarrow{T_m} \text{ a.e.}$  and  $\mu_T(U)=M(T)=m(T)= \mu_{T,m}(U)$.
\end{cor}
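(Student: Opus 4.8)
The plan is to obtain the first assertion by repeating the Riesz-representation argument behind Proposition \ref{masstoint}, and then to derive the equality $M(T)=m(T)$ from the fact that the orienting vectors produced by both arguments are \emph{simple}, on which the two relevant norms agree.

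First, for the statement that $m(T)<\infty$ implies $T=\overrightarrow{T_m}\wedge\mu_{T,m}$ with $\mu_{T,m}(U)=m(T)$: the scalar product norm $|\cdot|=\sqrt{\langle\cdot,\cdot\rangle}$ is a genuine (Hilbert) norm on $\mathcal{D}_{\mathbb{H}}^k(U)$, so every step of the proof of Proposition \ref{masstoint} carries over with the comass $\norm{\omega}^*$ replaced by $|\omega|$. Finiteness of $m(T)$ then furnishes, via Riesz, a Radon measure $\mu_{T,m}$ and a $\mu_{T,m}$-measurable unit vector field $\overrightarrow{T_m}:U\to {\prescript{}{H}\bigwedge}_k$ with $T(\omega)=\int\langle\omega\vert\overrightarrow{T_m}\rangle\,d\mu_{T,m}$ and $\mu_{T,m}(U)=m(T)$, where ``unit'' now means $|\overrightarrow{T_m}(p)|=1$ for $\mu_{T,m}$-a.a.\ $p$.

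For the ``in particular'' part, assume $M(T)<\infty$. Since $m(T)\le M(T)$ we also have $m(T)<\infty$, so both representations are available: Proposition \ref{masstoint} gives $(\overrightarrow{T},\mu_T)$ with $\mu_T(U)=M(T)$, and the previous paragraph gives $(\overrightarrow{T_m},\mu_{T,m})$ with $\mu_{T,m}(U)=m(T)$. The key observation is that both $\overrightarrow{T}(p)$ and $\overrightarrow{T_m}(p)$ lie in ${\prescript{}{H}\bigwedge}_k$ and are therefore \emph{simple} $k$-vectors (Definition \ref{HLambda}). For simple $k$-vectors the mass norm, dual to the comass, which normalises $\overrightarrow{T}$, coincides with the Euclidean/scalar norm, which normalises $\overrightarrow{T_m}$. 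Consequently the pair $(\overrightarrow{T},\mu_T)$ is simultaneously a representation of $T$ by a vector field of unit \emph{scalar} norm a.e.

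It then remains to invoke uniqueness: if $T=\overrightarrow{S_1}\wedge\mu_1=\overrightarrow{S_2}\wedge\mu_2$ with $|\overrightarrow{S_i}|=1$ a.e., then, regarding $T$ as a ${\prescript{}{H}\bigwedge}_k$-valued measure, each $\mu_i$ is its total variation and each $\overrightarrow{S_i}$ the corresponding Radon--Nikodym density, whence $\mu_1=\mu_2$ and $\overrightarrow{S_1}=\overrightarrow{S_2}$ a.e. Applying this to $(\overrightarrow{T},\mu_T)$ and $(\overrightarrow{T_m},\mu_{T,m})$ gives $\mu_T=\mu_{T,m}$ (in particular $\mu_T$ is unique) and $\overrightarrow{T}=\overrightarrow{T_m}$ a.e., so that $M(T)=\mu_T(U)=\mu_{T,m}(U)=m(T)$, closing the chain of equalities. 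The main obstacle is the third paragraph: one must verify carefully that the comass-dual norm and the scalar norm genuinely agree on the simple integrable vectors of ${\prescript{}{H}\bigwedge}_k$ in the Heisenberg setting. This is exactly what forces the two a priori different normalisations, and hence the two masses, to coincide, and it is the only point at which the specific structure of ${\prescript{}{H}\bigwedge}_k$ is used.
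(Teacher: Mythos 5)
Your argument is correct and is essentially the proof the paper intends: the corollary is stated without proof, with a pointer to the Riesz representation theorem and to 2.6Ch6/4.14Ch1 in \cite{SIMON}, which is precisely the polar-decomposition uniqueness you invoke. The norm identity you flag as the main obstacle does hold: for a simple $v\in{\prescript{}{H}\bigwedge}_k$ one has $\langle\omega\vert v\rangle\le\norm{\omega}^*\,\vert v\vert$ because $v/\vert v\vert$ is itself a competitor in the supremum defining the comass, while the reverse inequality follows by testing against the unit covector dual to $v$, so the comass-dual (mass) norm of $v$ equals $\vert v\vert$ and the two normalisations coincide.
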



\begin{cor}[compare with 4.1.7 in \cite{FED}]\label{mass=T}
Consider an open set $U \subseteq \mathbb{H}^n$ and $T \in \mathcal{D}_{\mathbb{H},k} (U)$. If $\spt T$ is compact, then
$$
M(T) < \infty \quad \text{if and only if} \quad T=\overrightarrow{T} \wedge \mu_T.
$$
\end{cor}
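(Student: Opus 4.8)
The statement is an equivalence, so I would treat the two implications separately and observe that the compactness of $\spt T$ is only needed for one of them. The implication $M(T)<\infty \Rightarrow T=\overrightarrow{T}\wedge\mu_T$ is already contained in Proposition \ref{masstoint} (with $\mu_T(U)=M(T)$), and that proposition assumes nothing about the support; so for this direction I would simply invoke \ref{masstoint}. All the work is in the converse, where I assume $T=\overrightarrow{T}\wedge\mu_T$ in the sense of Definition \ref{repbyint} and must produce the bound $M(T)<\infty$.

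For the converse, write $T(\omega)=\int_U \langle \omega(p)\mid \overrightarrow{T}(p)\rangle\, d\mu_T(p)$ with $\norm{\overrightarrow{T}(p)}=1$ for $\mu_T$-a.a.\ $p$. The strategy is to reduce everything to the finiteness of the total measure $\mu_T(U)$. Granting for the moment that $\mu_T$ is concentrated on $\spt T$ (the key step, discussed below), the finiteness follows at once: $\spt T$ is compact and $\mu_T$ is a Radon measure, hence finite on compact sets, so $\mu_T(U)=\mu_T(\spt T)<\infty$. With this in hand the comass estimate is routine. For any admissible test form $\omega\in\mathcal{D}_{\mathbb{H}}^k(U)$ with $\norm{\omega(p)}^*\le 1$, the vector $\overrightarrow{T}(p)$ is, for $\mu_T$-a.a.\ $p$, a unit simple integrable $k$-vector in ${\prescript{}{H}\bigwedge}_k$, so the definition of comass in Definition \ref{masscomass} gives $\langle \omega(p)\mid \overrightarrow{T}(p)\rangle \le \norm{\omega(p)}^*\le 1$. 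Integrating and taking the supremum over such $\omega$ yields $M(T)\le \mu_T(U)<\infty$, which is exactly what is required.

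The main obstacle is therefore the claim that $\mu_T(U\setminus \spt T)=0$, i.e.\ that the representing measure cannot charge the complement of the support of the current. I would argue by contradiction on an open set $V\subseteq U\setminus\spt T$. By Definition \ref{supportCurrent}, every $\omega$ with $\spt\omega\subseteq V$ satisfies $T(\omega)=0$, that is $\int_V \langle\omega\mid \overrightarrow{T}\rangle\, d\mu_T=0$. If $\mu_T(V)>0$, then since $\overrightarrow{T}$ is $\mu_T$-measurable with $\norm{\overrightarrow{T}}=1$ a.e., Lusin's theorem provides a compact $C\subseteq V$ with $\mu_T(C)>0$ on which $\overrightarrow{T}$ is continuous; dualizing $\overrightarrow{T}$ pointwise (via the inner product on ${\prescript{}{H}\bigwedge}_k$) and then mollifying and cutting off inside $V$ produces a smooth compactly supported Rumin form $\omega$ with $\spt\omega\subseteq V$, comass at most $1$, and $\int_V\langle\omega\mid\overrightarrow{T}\rangle\, d\mu_T$ arbitrarily close to $\mu_T(C)>0$. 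This contradicts $T(\omega)=0$, forcing $\mu_T(V)=0$. Covering the open set $U\setminus\spt T$ by countably many such $V$ and using outer regularity of the Radon measure $\mu_T$ gives $\mu_T(U\setminus\spt T)=0$, completing the reduction. This Lusin-type concentration argument is where the only real analytic content lies; it mirrors the classical Riemannian proof (compare 4.1.7 in \cite{FED}), the Heisenberg-specific ingredient being merely that $\overrightarrow{T}$ takes values in ${\prescript{}{H}\bigwedge}_k$ and is paired against Rumin forms.
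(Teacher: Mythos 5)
Your proposal is correct and follows essentially the same route as the paper: one direction is delegated to Proposition \ref{masstoint}, and the other is the comass estimate $M(T)\le\mu_T(U)$ combined with finiteness of $\mu_T(U)$ from compactness of the support. The only difference is that the paper compresses the latter step into the phrase ``because $T$ has compact support,'' whereas you explicitly justify it by showing $\mu_T$ is concentrated on $\spt T$ via a Lusin-type argument — a genuine detail the paper leaves implicit, and your filling of it is sound.
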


\begin{proof}
From Proposition \ref{masstoint} we know that $M(T) < \infty$ implies $T=\overrightarrow{T} \wedge \mu_T$. On the other hand,
\begin{align*}
\left \vert
M(T)
\right \vert
\leq
 \sup\limits_{\omega \in  \mathcal{D}_{\mathbb{H}}^k(U), \ \norm{\omega(p)}^* \leq 1 }   
\left \vert
 \int \langle \omega \vert \overrightarrow{T} \rangle d \mu_T
\right \vert
\leq
 \int  \norm{\omega}^*  \norm{\overrightarrow{T}}  d \mu_T
\leq
\mu_T (U) < \infty
\end{align*}
because $T$ has compact support.
\end{proof}



\subsection{Classification of Sub-Riemannian Currents in $\mathbb{H}^n$}

Currents, depending on their integration properties and on those of their boundaries, can be classified into subspaces. Particularly, in case we assume their support to be compact, we can work with currents of finite mass (see figure \ref{fig:diagram_currents}); otherwise we need to consider currents with only locally finite mass (see figure \ref{fig:diagram_currents_lfm}).

\begin{defin}[see 5.19 in \cite{FSSC}]
Consider an open set $U \subseteq \mathbb{H}^n$, a current  $T \in  \mathcal{D}_{\mathbb{H},k}(U)$ and $1\leq k\leq 2n+1$. %
We call \emph{Heisenberg boundary of} $T$ the $(k-1)$-dimensional Heisenberg current denoted $\partial T$ (or sometimes $\partial_{\mathbb{H}} T$) and defined as:
\begin{align*}
\partial T (\omega) &:= T(d_Q \omega ), \quad \text{ if } k \neq n+1 \\ 
\end{align*}
and 
\begin{align*}
\partial T (\omega) &:= T(D \omega ), \quad \text{ if } k = n+1, \\
\end{align*}
where $\omega \in \mathcal{D}_{\mathbb{H}}^{k-1}(U)$.
\end{defin}

\begin{defin}
Consider an open set $U \subseteq \mathbb{H}^n$ and $1\leq k\leq 2n+1$. We define the \emph{space of currents with compact support} as
\begin{align*}
\mathcal{E}_{\mathbb{H},k} (U)  &  := \left  \{  T \in  \mathcal{D}_{\mathbb{H},k}(U)  \ / \ \spt T \text{ compact }  \right  \}.
\end{align*}
Furthermore, we can define the \emph{spaces of currents with finite mass} as
\begin{align*}
R_{\mathbb{H},k}(U)      :& = \left   \{   T \in   \mathcal{E}_{\mathbb{H},k} (U)   \ / \  M(T) < \infty         \right   \};  \\
  N_{\mathbb{H},k}    (U)       :&=       \left   \{     T \in \mathcal{E}_{\mathbb{H},k}(U)  \ / \     M(T)+ M( \partial T ) < \infty  \right   \} \subseteq R_{\mathbb{H},k}(U)     
.
\end{align*}
\end{defin}

\noindent
By Corollary \ref{mass=T}, we can immediately characterise the spaces as follows:
\begin{align*}
R_{\mathbb{H},k}(U)    & = \{ T \in \mathcal{E}_{\mathbb{H},k}(U)  \ / \  T=\overrightarrow{T} \wedge \mu_T \};   
\\
  N_{\mathbb{H},k}    (U)    &  =   \{  T \in \mathcal{E}_{\mathbb{H},k}(U)  \ / \  T=\overrightarrow{T} \wedge \mu_T, \ \partial T=\overrightarrow{\partial T} \wedge \mu_{ \partial T} \}.
\end{align*}

\noindent
The next step consists in defining rectifiable currents. For that we need to first define $\mathbb{H}$-regular and $\mathbb{H}$-rectifiable sets:

\begin{defin}[see 3.1 in \cite{FSSC}]
Consider $1\leq k \leq n$. A subset $S \subseteq \mathbb{H}^n$ is a $\mathbb{H}$-\emph{regular} $k$-\emph{dimensional surface} if for all $p \in S$   there exists  a neighbourhood $U$ of $p$,   
 an open set $ V \subseteq \mathbb{R}^k$ and a function $\varphi : V \to U$, $ \varphi \in C_{\mathbb{H}}^1(V,U) $  injective with $d_H \varphi $ injective, such that $ S \cap U = \varphi (V)$.
\end{defin}

\begin{defin}[see 3.2 in \cite{FSSC}]\label{Hreg}
Consider $1\leq k \leq n$. A subset $S \subseteq \mathbb{H}^n$ is a $\mathbb{H}$-\emph{regular} $k$-\emph{codimensional surface} if for all $ p \in S $ there exists a neighbourhood  $U$ of $p$   
 and a function  $ f : U \to \mathbb{R}^k$, $ f \in C_{\mathbb{H}}^1(U,\mathbb{R}^k)$, such that  $  {\nabla_\mathbb{H} f_1} \wedge \dots \wedge {\nabla_\mathbb{H} f_k}   \neq 0 $ on   $ U $ and  $  S \cap U = \{ f=0 \} $.
\end{defin}

\begin{defin}[see 
 5.1 in \cite{FSSC}]\label{rectset}
Consider $S \subseteq \mathbb{H}^n$ and $\mathcal{S}^k_\infty$ the spherical Haussdorff measure defined in Subsection $2.1$ in  \cite{FSSC}. We say that $S$ is a $k$\emph{-dimensional} $\mathbb{H}$\emph{-rectifiable set} if
$$
S \subseteq S_0 \cup \bigcup_{j=1}^{\infty} S_j,
$$
where
\begin{itemize}
\item
if $1 \leq k \leq n$: $S$ is $\mathcal{S}^k_\infty$-measurable, $\mathcal{S}^k_\infty (S) < \infty$, $S_j$'s are $k$-dimensional $\mathbb{H}$-regular surfaces and $\mathcal{S}^k_\infty (S_0) =0$;
\item
if $n+1 \leq k \leq 2n+1$: $S$ is $\mathcal{S}^{k+1}_\infty$-measurable, $\mathcal{S}^{k+1}_\infty (S) < \infty$, $S_j$'s are $(2n+1-k)$-dimensional $\mathbb{H}$-regular surfaces and $\mathcal{S}^{k+1}_\infty (S_0) =0$.
\end{itemize}

\end{defin}

\noindent
If $M \subseteq \mathbb{H}^n$ is a $\mathbb{H}$-rectifiable set, we can assume that (see 
5.7 in \cite{FSSC})
$$
M = M_0 \cup \bigcup_{j=1}^{\infty} M_j
$$
where $M_0$ has measure zero and $M_j$'s are pairwise disjointed Borel subsets of $\mathbb{H}$-regular surfaces $S_j$'s as in Definition \ref{rectset}. This implies that $M$ can be oriented by the $M_j$'s, when such orientations exist, up to the set $M_0$.   
Now we can define the set of rectifiable currents:

\begin{defin}\label{defrectH}
Consider an open set $U \subseteq \mathbb{H}^n$ and $1\leq k\leq 2n+1$. We define the \emph{space of $\mathbb{H}$-rectifiable currents} as
\begin{align*}
\mathcal{R}_{\mathbb{H}\text{-rect},k} (U) &   :=       \left   \{    T \in \mathcal{E}_{\mathbb{H},k} (U) \ / \      T(\omega) = \int_{U_T} \langle \omega (p) \vert \overrightarrow T (p) \rangle \rho (p)  d  \mu_k , \ \omega  \in \mathcal{D}_{\mathbb{H}}^k (U) \right   \} 
\end{align*}
where  
$U_T$ is an $\mathbb{H}$-rectifiable $k$-dimensional set oriented (up to a set of measure zero) by $\overrightarrow T $, a $\mu_k$-a.e. unit $k$-vector in ${\prescript{}{H}\bigwedge}^k$, $\rho$ is a positive integer multiplicity s.t. $\int_{U_T \cap \spt T} \rho (p) d  \mu_k < \infty $ and 
$$
\mu_k:=
\begin{cases}
\mathcal{S}^k_\infty, & \text{if } 1\leq k \leq n;\\
\mathcal{S}^{k+1}_\infty, & \text{if }  n+1\leq k \leq 2n+1.
\end{cases}
$$
Then we define the \emph{space of integral $\mathbb{H}$-rectifiable currents} as
\begin{align*}
  I_{\mathbb{H}\text{-rect},k} (U)     :=       \left   \{    T \in \mathcal{R}_{\mathbb{H}\text{-rect},k} (U) \ / \    \partial T \in \mathcal{R}_{\mathbb{H}\text{-rect},k-1} (U)      \right \}   \subseteq \mathcal{R}_{\mathbb{H}\text{-rect},k} (U) .
\end{align*}
\end{defin}

\begin{prop}\label{massfinimplies}
Consider an open set $U \subseteq \mathbb{H}^n$ and $1\leq k\leq 2n+1$. Then $T \in \mathcal{R}_{\mathbb{H}\text{-rect},k}  (U)$ implies $M (T) < \infty$, i.e.,
$$
\mathcal{R}_{\mathbb{H}\text{-rect},k}  (U)   \subseteq    R_{\mathbb{H},k}  (U) .
$$
This also immediately implies that $I_{\mathbb{H}\text{-rect},k}  (U)   \subseteq    N_{\mathbb{H},k}  (U) .$
\end{prop}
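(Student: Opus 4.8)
The plan is to estimate the comass mass $M(T)$ directly from the integral representation of a rectifiable current, reducing everything to a pointwise comass bound. Fix $T \in \mathcal{R}_{\mathbb{H}\text{-rect},k}(U)$ written as $T(\omega) = \int_{U_T} \langle \omega(p) \vert \overrightarrow{T}(p) \rangle \rho(p) \, d\mu_k$, and take an arbitrary competitor $\omega \in \mathcal{D}_{\mathbb{H}}^k(U)$ with $\norm{\omega(p)}^* \leq 1$ for all $p$. I would bound $|T(\omega)|$ by passing the absolute value inside the integral and estimating the integrand pointwise, then take the supremum over all admissible $\omega$.

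The key observation is that, by the very definition of $\mathcal{R}_{\mathbb{H}\text{-rect},k}(U)$, the orienting $k$-vector $\overrightarrow{T}(p)$ is a unit, simple, integrable $k$-vector, i.e. exactly the type of element over which the supremum in the comass (Definition \ref{masscomass}) is taken. Hence $\langle \omega(p) \vert \overrightarrow{T}(p) \rangle \leq \norm{\omega(p)}^*$, and applying the same inequality to the oppositely oriented $-\overrightarrow{T}(p)$ yields $|\langle \omega(p) \vert \overrightarrow{T}(p) \rangle| \leq \norm{\omega(p)}^* \leq 1$. Inserting this bound into the representation gives
$$
|T(\omega)| \leq \int_{U_T} |\langle \omega(p) \vert \overrightarrow{T}(p) \rangle| \, \rho(p) \, d\mu_k \leq \int_{U_T} \rho(p) \, d\mu_k,
$$
so that taking the supremum over all such $\omega$ produces $M(T) \leq \int_{U_T} \rho \, d\mu_k$.

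It then remains to see that this integral is finite, and this is where I expect the only genuinely delicate point to lie. The hypothesis in Definition \ref{defrectH} guarantees $\int_{U_T \cap \spt T} \rho \, d\mu_k < \infty$, which bounds the integral over $U_T \cap \spt T$ rather than over all of $U_T$. To close the gap I would argue that the part of $U_T$ carrying nonzero multiplicity must lie in $\spt T$: since $T$ annihilates every form supported away from $\spt T$, the integrand $\langle \omega \vert \overrightarrow{T}\rangle \rho$ contributes nothing there, so $\int_{U_T} \rho \, d\mu_k$ reduces to $\int_{U_T \cap \spt T} \rho \, d\mu_k < \infty$; the pointwise comass estimate itself is immediate from the definitions. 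This yields $M(T) < \infty$ and hence the inclusion $\mathcal{R}_{\mathbb{H}\text{-rect},k}(U) \subseteq R_{\mathbb{H},k}(U)$.

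Finally, the integral statement follows at once. If $T \in I_{\mathbb{H}\text{-rect},k}(U)$, then by definition $T \in \mathcal{R}_{\mathbb{H}\text{-rect},k}(U)$ and $\partial T \in \mathcal{R}_{\mathbb{H}\text{-rect},k-1}(U)$, so applying the mass bound just established to each of $T$ and $\partial T$ gives $M(T) + M(\partial T) < \infty$, which is precisely the defining condition for $N_{\mathbb{H},k}(U)$. Hence $I_{\mathbb{H}\text{-rect},k}(U) \subseteq N_{\mathbb{H},k}(U)$.
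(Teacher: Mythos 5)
Your proposal is correct and follows essentially the same route as the paper's proof: bound $|T(\omega)|$ by the pointwise comass estimate $|\langle \omega \vert \overrightarrow{T}\rangle| \le \norm{\omega}^*\norm{\overrightarrow{T}} \le 1$ inside the integral representation, reduce to $\int_{U_T \cap \spt T}\rho\, d\mu_k < \infty$, and deduce the integral case by applying the bound to both $T$ and $\partial T$. You are in fact slightly more explicit than the paper on the passage from $\int_{U_T}$ to $\int_{U_T\cap\spt T}$, which the paper performs silently in its final inequality.
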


\begin{proof}
The proof is a simple computation. Consider $ T \in \mathcal{R}_{\mathbb{H}\text{-rect},k}  (U)$, then:
\begin{align*}
M(T)
&=
 \sup\limits_{\omega \in  \mathcal{D}_{\mathbb{H}}^k(U), \ \norm{\omega(p)}^* \leq 1 }   
\left \vert
\int_{U_T} \langle \omega (p) \vert \overrightarrow T (p) \rangle \rho (p)  d  \mu_k 
\right \vert\\
&\leq
 \sup\limits_{\omega \in  \mathcal{D}_{\mathbb{H}}^k(U), \ \norm{\omega(p)}^* \leq 1 }   
 \int_{U_T}  \norm{\omega}^*  \norm{\overrightarrow{T}} 
\rho (p) 
 d \mu_k\\
&\leq
 \int_{U_T \cap \spt T} \rho  (p)  d \mu_k
< \infty
\end{align*}
\end{proof}

\begin{defin}
Consider an open set $U \subseteq \mathbb{H}^n$ and $1\leq k\leq 2n+1$. In a similar way as above, we can define the \emph{spaces of $\mathbb{H}$-regular currents} and \emph{integral $\mathbb{H}$-regular currents} respectively as
\begin{align*}
 \mathcal{R}_{\mathbb{H},k}   (U)   &   :=       \left   \{    T \in \mathcal{R}_{\mathbb{H}\text{-rect},k}(U)  \ / \   U_T \text{ is an orientable } \mathbb{H}\text{-regular surface}           \right \} \quad \text{and}
\\
 I_{\mathbb{H},k}(U)  &   :=       \left   \{      T \in    \mathcal{R}_{\mathbb{H},k}(U)  \ / \        \partial T \in     \mathcal{R}_{\mathbb{H},k-1}    (U)      \right \}   \subseteq    \mathcal{R}_{\mathbb{H},k}  (U)   .
\end{align*}
\end{defin}

\noindent
Consider an open set $U \subseteq \mathbb{H}^n$ and $1\leq k\leq 2n+1$. By the definition it is straightforward that
\begin{align*}
\mathcal{R}_{\mathbb{H},k}  (U)   \subseteq \mathcal{R}_{\mathbb{H}\text{-rect},k}  (U) \quad \text{and} \quad  I_{\mathbb{H},k}  (U)   \subseteq    I_{\mathbb{H}\text{-rect},k}  (U)   .
\end{align*}


\begin{prop}[compare with Section 4.3B in \cite{MORGAN1}]
Consider an open set $U \subseteq \mathbb{H}^n$ and $1\leq k\leq 2n+1$. If $T\in \mathcal{R}_{\mathbb{H}\text{-rect},k}(U)$, we have that
$$
\mu_T(U)=   M(T) 
= \int_{U_T \cap \spt T} \rho  (p)  d \mu_k.
$$
\end{prop}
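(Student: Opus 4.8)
The plan is to reduce everything to computing $\mu_T(U)$ and then to identify the representing measure $\mu_T$ with the explicit measure $\rho\,\mu_k$ carried by $U_T$. First note that Proposition \ref{massfinimplies} already gives $M(T)<\infty$ for every $T\in\mathcal{R}_{\mathbb{H}\text{-rect},k}(U)$; indeed its proof produces the bound $M(T)\leq\int_{U_T\cap\spt T}\rho\,d\mu_k<\infty$, which is exactly the ``$\leq$'' half of the second claimed equality. Since $M(T)<\infty$, Proposition \ref{masstoint} applies and simultaneously yields the representation $T=\overrightarrow{T}\wedge\mu_T$ and the first identity $\mu_T(U)=M(T)$ at no cost. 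Thus the entire statement reduces to showing $\mu_T(U)=\int_{U_T\cap\spt T}\rho\,d\mu_k$.

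The second step is the identification $\mu_T=\rho\,\mu_k\mres(U_T\cap\spt T)$. Set $\nu:=\rho\,\mu_k\mres(U_T\cap\spt T)$; this is a finite, hence Radon, Borel measure precisely because the multiplicity $\rho$ satisfies $\int_{U_T\cap\spt T}\rho\,d\mu_k<\infty$ by Definition \ref{defrectH}. The defining formula $T(\omega)=\int_{U_T}\langle\omega(p)\mid\overrightarrow{T}(p)\rangle\,\rho(p)\,d\mu_k$ then exhibits $T$ as representable by integration in the sense of Definition \ref{repbyint}, with measure $\nu$ and unit vector field $\overrightarrow{T}$ (which is $\mu_k$-a.e.\ unit, hence $\nu$-a.e.\ unit since $\nu\ll\mu_k$). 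Because Corollary \ref{masses222} guarantees that the representing measure is \emph{unique} once $M(T)<\infty$, I may conclude $\mu_T=\nu$, and therefore $\mu_T(U)=\int_{U_T\cap\spt T}\rho\,d\mu_k$, which closes the chain with the first identity.

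To justify restricting the integration domain from $U_T$ to $U_T\cap\spt T$ in the previous step, I would argue that $\mu_k(U_T\setminus\spt T)=0$. If $B\subseteq U\setminus\spt T$ is a ball, then $T(\omega)=0$ for every $\omega$ supported in $B$, so $\int_{U_T\cap B}\langle\omega\mid\overrightarrow{T}\rangle\,\rho\,d\mu_k=0$ for all such $\omega$; since $\rho>0$ and $\overrightarrow{T}$ is a unit vector, choosing $\omega$ with $\langle\omega\mid\overrightarrow{T}\rangle$ close to $1$ on $U_T\cap B$ forces $\mu_k(U_T\cap B)=0$, and covering $U\setminus\spt T$ by countably many such balls gives $\mu_k(U_T\setminus\spt T)=0$. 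Equivalently, this is the standard fact $\spt\mu_T\subseteq\spt T$ for a current representable by integration. Consequently the integrals over $U_T$ and over $U_T\cap\spt T$ coincide, and $\nu$ is genuinely finite.

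I expect the only delicate point to be the uniqueness step: one must check that the unit vector field in the Riesz representation of Proposition \ref{masstoint} agrees $\mu_T$-a.e.\ with the geometric orientation $\overrightarrow{T}$ of Definition \ref{defrectH}, and that ``unit'' refers to the same norm in both, so that Corollary \ref{masses222} really forces the two measures to coincide; the Radon/finiteness verification and the support reduction are then routine. As an alternative route closer to Section 4.3B in \cite{MORGAN1}, one could instead prove the reverse inequality $M(T)\geq\int_{U_T\cap\spt T}\rho\,d\mu_k$ directly by a Lusin-type approximation of $\overrightarrow{T}$ by admissible smooth forms $\omega\in\mathcal{D}_{\mathbb{H}}^k(U)$ with $\norm{\omega(p)}^*\leq1$ and $\langle\omega\mid\overrightarrow{T}\rangle\to1$ on $U_T$; I would keep the measure-identification argument as the primary one, since the paper already has the uniqueness statement of Corollary \ref{masses222} at hand.
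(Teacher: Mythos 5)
Your proposal is correct and follows essentially the same route as the paper: both deduce $\mu_T(U)=M(T)$ from Proposition \ref{masstoint} via the finiteness supplied by Proposition \ref{massfinimplies}, and both obtain the second equality by reading the defining integral formula of a rectifiable current as a representation $T=\overrightarrow{T}\wedge\rho\,\mu_k\mres(U_T\cap\spt T)$ and invoking uniqueness of the Riesz representation to conclude $\mu_T=\rho\,\mu_k\mres(U_T\cap\spt T)$. The only difference is that you explicitly justify replacing $U_T$ by $U_T\cap\spt T$ and flag the normalization of $\overrightarrow{T}$, details the paper leaves implicit.
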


\begin{proof}
The first equality in the statement comes from Proposition \ref{masstoint}. For the second equality, by Proposition \ref{massfinimplies}, we know that $T\in \mathcal{R}_{\mathbb{H}\text{-rect},k}(U)$ implies  $T=\overrightarrow{T} \wedge \mu_T$. At the same time, $T\in \mathcal{R}_{\mathbb{H}\text{-rect},k}(U)$ says that we can write 
$$
T(\star)=\int_{U_T} \langle \star \vert \overrightarrow{T}  \rangle \rho  d  \mu_k = \overrightarrow{T} \wedge  \rho  \mu_k \mres \left (  U_T \cap \spt T\right ) (\star).
$$
By uniqueness of the representation by integration, that comes from Riesz Representation Theorem, we have that
$$
\mu_T = \rho  \mu_k \mres \left (  U_T \cap \spt T\right ), \quad \text{i.e.,} \quad \mu_T (U) =\int_{U_T \cap \spt T}  \rho (p) d \mu_k .
$$
\end{proof}

\noindent
We remind that a $C^1$-Euclidean regular $k$-surface can be written as $S=C(S) \cup \left (     S \setminus   C(S)   \right )$ where, for $n+1 \leq k \leq 2n+1$, $\mathcal{S}^{k+1}_\infty (C(S)) =0$ and  $S \setminus   C(S)  $ is a $\mathbb{H}$-regular surface (see page 195 in \cite{FSSC}).\\
For this reason, when  $n+1 \leq k \leq 2n+1$,  
\begin{align*}
 \mathcal{R}_{\mathbb{H},k}  (U)    &=   
    \left   \{    T \in \mathcal{R}_{\mathbb{H}\text{-rect},k} (U)  \ / \      U_T \text{ is an orientable } \mathbb{H}\text{-regular surface}     \right \} 
\\
  &   \supseteq       \left   \{    T \in \mathcal{R}_{\mathbb{H}\text{-rect},k} (U)  \ / \      U_T \text{ is an orientable }  C^1\text{-regular surface}    \right \} 
\end{align*}
and the same is true for $ I_{\mathbb{H},k}(U)$,

\noindent
The inclusions noted so far are summarised in figure \ref{fig:diagram_currents} (compare with 4.1.24 in \cite{FED}).\\

\begin{figure}[!ht]
\centering
\begin{empheq}[box=\fbox]{alignat*=8}
& \underset{\substack{\mathbb{H}\text{-reg. integral currents}}}{   I_{\mathbb{H},k}(U)  } & \subseteq & \underset{\substack{\mathbb{H}\text{-regular currents}}}{    \mathcal{R}_{\mathbb{H},k} (U) } & & \\
& \quad \quad \quad  \rotatebox[origin=c]{270}{$\subseteq$} & &  \quad \quad \quad   \rotatebox[origin=c]{270}{$\subseteq$}  & & \\
 & \underset{\substack{\mathbb{H}\text{-rect. integral currents}}}{I_{\mathbb{H}\text{-rect},k}(U)} & \subseteq & \underset{\substack{\mathbb{H}\text{-rectifiable currents}}}{\mathcal{R}_{\mathbb{H}\text{-rect},k}(U)} &  &   \\
&  \quad \quad \quad   \rotatebox[origin=c]{270}{$\subseteq$}  & & \quad \quad \quad    \rotatebox[origin=c]{270}{$\subseteq$}      & &     \\
&  \underset{\substack{\mathbb{H}\text{-normal currents}}}{   N_{\mathbb{H},k}(U) }     & \subseteq      & \quad \quad     R_{\mathbb{H},k}(U)        &   \subseteq  &\quad  \mathcal{E}_{\mathbb{H},k}(U)  \quad  \subseteq  \quad  \underset{\substack{\text{Rumin Currents}}}{ \mathcal{D}_{\mathbb{H},k}(U) }  
\end{empheq}
\caption{ } 
\label{fig:diagram_currents}
\end{figure}

\noindent
A similar figure can be obtained without requiring compact support and considering only sets with locally finite mass, meaning finite mass on compact subsets, local integrability by integration and so on. We can denote such sets with the subscription \emph{lfm} for  ``locally finite mass'' and this gives figure \ref{fig:diagram_currents_lfm}. Currents with locally finite mass have been studied, among others, by Franchi at al. (\cite{FSSC}).

\begin{figure}[!ht]
\centering
\begin{empheq}[box=\fbox]{alignat*=8}
& I_{\mathbb{H},k,\emph{lfm}}(U) &  \quad \subseteq \quad & \mathcal{R}_{\mathbb{H},k,\emph{lfm}}(U) \\
&   \quad \quad \quad  \rotatebox[origin=c]{270}{$\subseteq$} &   & \quad \quad \quad     \rotatebox[origin=c]{270}{$\subseteq$} \\
&    I_{\mathbb{H}\text{-rect},k,\emph{lfm}}(U)   & \quad \subseteq  \quad&  \mathcal{R}_{\mathbb{H}\text{-rect},k,\emph{lfm}}(U)  \\
& \quad \quad \quad    \rotatebox[origin=c]{270}{$\subseteq$}    & & \quad \quad \quad      \rotatebox[origin=c]{270}{$\subseteq$} \\
& N_{\mathbb{H},k,\emph{lfm}}(U)  &\quad \subseteq  \quad   &        R_{\mathbb{H},k,\emph{lfm}} (U)   &  \quad \subseteq \quad \mathcal{D}_{\mathbb{H},k,\emph{lfm}}(U) 
\end{empheq}
\caption{ } 
\label{fig:diagram_currents_lfm}
\end{figure}



\section{Slicing of Currents in the Heisenberg Group}\label{section:slicing}
In this section we define the notion of slices of Heisenberg currents and show, in propositions \ref{first4properties} and \ref{next3properties}, seven important properties.  
Proposition \ref{next3properties}, in particular, carries deep consequences for the possibility of developing a compactness theorem for currents in the Heisenberg group because it does not include the slices of the middle dimension $k=n$.    
Furthermore, this suggests that the study of currents on the first Heisenberg group $\mathbb{H}^1$ diverges from the other cases, because that is the only situation in which the dimension of the slice of a hypersurface, $2n-1$, coincides with the middle dimension $n$, which triggers a change in the associated differential operator in the Rumin complex.     
The most important references for the Riemannian case are sections 4.1.7 and 4.2.1 in \cite{FED} and the matching sections in \cite{MORGAN1}.

\begin{defin}
Consider an open set $U \subseteq \mathbb{H}^n$. We give the following definitions.
\begin{itemize}
\item If $f \in \mathcal{D}_{\mathbb{H}}^0(U)= C^\infty(U)$, $T \in \mathcal{D}_{\mathbb{H},k}(U)$ and $\omega \in \mathcal{D}_{\mathbb{H}}^k(U)$, then
$$
(T \mres f)(\omega):=T(f\omega).
$$
\item If $\varphi \in \mathcal{D}_{\mathbb{H}}^m(U) $, $m \leq k$, $T \in \mathcal{D}_{\mathbb{H},k}(U)$ and $ \omega \in \mathcal{D}_{\mathbb{H}}^{k-m}(U)$, then
$$
(T \mres \varphi)(\omega):=T(\varphi \wedge \omega).
$$
\item If $A \subseteq \mathbb{H}^n $ Borel set, $\chi_A :\mathbb{H}^n \to \{0,1\}  $ and $T \in R_{\mathbb{H},k}(U)$, then
$$
T \mres A(\omega)=T \mres \chi_A (\omega) :=  \int_{U }   \langle \chi_{A}  \omega  \vert \overrightarrow{ T} \rangle   d \mu_{ T}=\int_{U \cap A }   \langle  \omega  \vert \overrightarrow{ T} \rangle   d \mu_{ T}.
$$
\item If $T \in \mathcal{D}_{\mathbb{H},k}(U)$ is representable by integration,  $T=\overrightarrow{T} \wedge \mu_T$, and a function $f : U \to \mathbb{R}$ is such that $\int |f| d \mu_T < \infty$, then
$$
T \mres f := \overrightarrow{T} \wedge f\mu_T.
$$
\end{itemize}
\end{defin}

\begin{defin}\label{defin:slices}
Consider an open set $U \subseteq \mathbb{H}^n$, $f \in Lip(U,\mathbb{R})$, $t \in \mathbb{R}$ and $T\in \mathcal{D}_{\mathbb{H},k}(U)$. We define \emph{slices of $T$} the following two currents:
\begin{align*}
\langle T,f,t+ \rangle :=\left ( \partial T \right ) \mres \{ f>t \} -  \partial \left ( T \mres \{ f>t \} \right ),\\
\langle  T,f,t - \rangle :=    \partial \left ( T \mres \{ f < t \} \right ) -  \left (  \partial T  \right ) \mres \{ f <t \}.
\end{align*}
\end{defin}

\noindent
It is important to notice that, considering an open set $U \subseteq \mathbb{H}^n$, a function $f\in C^\infty (U)$ and a current $T \in R_{\mathbb{H},k}(U)$  $\big ($resp. $ \mathcal{R}_{\mathbb{H}\text{-rect},k} (U) $  or $ \mathcal{R}_{\mathbb{H},k} (U)  \big )$, we cannot imply that 
$
T \mres f \in R_{\mathbb{H},k}(U)  \left ( \text{resp. }  \mathcal{R}_{\mathbb{H}\text{-rect},k}(U)  \text{ or }  \mathcal{R}_{\mathbb{H},k}(U) \right ).
$   
The reason is that, applying a smooth function to the current, without further hypotheses, we cannot always expect the current mass 
 to remain finite. Nevertheless, something can still be said.\\

\noindent
Note that the following lemma contains three statement each (one in $R_{\mathbb{H},k}(U)$, one in $\mathcal{R}_{\mathbb{H}\text{-rect},k}(U)$ and one in  $\mathcal{R}_{\mathbb{H},k}(U)$); they are written together as the proofs are basically the same.

\begin{lem}\label{intvalued}
Consider an open set $U \subseteq \mathbb{H}^n$, $A \subseteq \mathbb{H}^n $ a Borel set   and $T \in R_{\mathbb{H},k}(U)$  $\big ($resp. $ \mathcal{R}_{\mathbb{H}\text{-rect},k} (U) $  or $ \mathcal{R}_{\mathbb{H},k} (U)  \big )$. Then
$$
T \mres \chi_A \in R_{\mathbb{H},k} (U) \left ( \text{resp. }  \mathcal{R}_{\mathbb{H}\text{-rect},k}(U)  \text{ or }  \mathcal{R}_{\mathbb{H},k}(U) \right ) .
$$
\end{lem}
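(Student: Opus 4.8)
The plan is to produce, in each of the three cases, an explicit integral representation of $T \mres \chi_A$ of the required type, leaving the orienting field $\overrightarrow{T}$ unchanged and simply intersecting the underlying measure, respectively set, with $A$. Since all three classes lie in $R_{\mathbb{H},k}(U)$ (Proposition \ref{massfinimplies}), Corollary \ref{mass=T} lets us write $T = \overrightarrow{T} \wedge \mu_T$. Unwinding the definition of the restriction gives
$$
(T \mres \chi_A)(\omega) = \int_{U \cap A} \langle \omega(p) \vert \overrightarrow{T}(p) \rangle \, d\mu_T(p), \qquad \omega \in \mathcal{D}_{\mathbb{H}}^k(U),
$$
that is, $T \mres \chi_A = \overrightarrow{T} \wedge (\mu_T \mres A)$. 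Hence the whole matter reduces to verifying that $\mu_T \mres A$ is Radon, that the mass remains finite, and that the support remains compact.

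For $T \in R_{\mathbb{H},k}(U)$ one has $\spt(T \mres \chi_A) \subseteq \spt T$, which is compact, so $T \mres \chi_A \in \mathcal{E}_{\mathbb{H},k}(U)$. As $A$ is Borel and $\mu_T$ is Radon, $\mu_T \mres A$ is again Radon, and $\overrightarrow{T}$ keeps unit norm $(\mu_T \mres A)$-a.e.; thus $T \mres \chi_A$ is representable by integration, with
$$
M(T \mres \chi_A) = (\mu_T \mres A)(U) = \mu_T(U \cap A) \leq \mu_T(U) = M(T) < \infty,
$$
by Proposition \ref{masstoint}. Finite mass together with compact support yields $T \mres \chi_A \in R_{\mathbb{H},k}(U)$ via Corollary \ref{mass=T}.

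For the remaining two cases I would use the identity $\mu_T = \rho \, \mu_k \mres (U_T \cap \spt T)$, which holds for rectifiable currents, to rewrite
$$
(T \mres \chi_A)(\omega) = \int_{U_T \cap A} \langle \omega(p) \vert \overrightarrow{T}(p) \rangle \, \rho(p) \, d\mu_k(p).
$$
This is exactly the form of Definition \ref{defrectH} with underlying set $U_T \cap A$, orienting vector the restriction of $\overrightarrow{T}$, and unchanged multiplicity $\rho$, the integrability being inherited from $\int_{U_T \cap \spt T} \rho \, d\mu_k < \infty$. Writing $U_T = S_0 \cup \bigcup_j S_j$ as in Definition \ref{rectset} and intersecting each piece with $A$ shows that $U_T \cap A$ is again $\mathbb{H}$-rectifiable, so $T \mres \chi_A \in \mathcal{R}_{\mathbb{H}\text{-rect},k}(U)$. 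In the regular case $U_T$ is a single orientable $\mathbb{H}$-regular surface and $U_T \cap A$ is a Borel subset of it inheriting the orientation, which places $T \mres \chi_A$ in $\mathcal{R}_{\mathbb{H},k}(U)$.

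The step that needs care, more a point of bookkeeping than a genuine obstacle, is the last one: one must check that intersecting the underlying set with an arbitrary Borel $A$ does not leave the class. For rectifiability this is immediate from the decomposition into Borel pieces of $\mathbb{H}$-regular surfaces; in the regular case one relies on the fact that a Borel subset of an orientable $\mathbb{H}$-regular surface still supplies an admissible oriented set for the representation in Definition \ref{defrectH}. The remaining measure-theoretic facts, namely that the restriction of a Radon measure to a Borel set is Radon, that $\overrightarrow{T}$ stays of unit length, and that compact support is preserved, are routine.
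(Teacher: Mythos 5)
Your argument is correct and is exactly what the paper intends: the paper's own ``proof'' is the single remark that the lemma is a one-line application of the definitions, and your write-up is precisely that application spelled out (restrict $\mu_T$, respectively $U_T$ and $\rho$, to $A$, and observe that representability, finite mass, compact support, rectifiability and orientation are all inherited). The only point where you are slightly more generous than the definitions strictly allow is the $\mathcal{R}_{\mathbb{H},k}$ case, where $U_T\cap A$ need not itself be an $\mathbb{H}$-regular surface; the cleaner bookkeeping is to keep $U_T$ unchanged and absorb $\chi_A$ into the multiplicity (or the measure), but this is the same triviality the paper waves away.
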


\noindent
The proof of this lemma is a one-line application of the definitions.

%

\begin{lem}\label{switchsigncurrent}
Consider an open set $U \subseteq \mathbb{H}^n$, $f \in Lip(U,\mathbb{R})$, $t \in \mathbb{R}$ and $T\in \mathcal{D}_{\mathbb{H},k}(U)$. Then
\begin{align*}
\langle T,f,t+ \rangle &= \partial \left ( T \mres \{ f \leq t \} \right ) - \left ( \partial T \right ) \mres \{ f \leq t \},\\
\langle  T,f,t - \rangle &= \left (   \partial T \right ) \mres \{ f \geq t \} -  \partial \left ( T \mres \{ f \geq t \} \right ).
\end{align*}
\end{lem}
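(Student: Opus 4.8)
The plan is to reduce both identities to two elementary facts: first, that restriction to a Borel set is additive over a partition of the ambient set into complementary Borel pieces, so that for any current $S$ representable by integration one has $S \mres A + S \mres (U \setminus A) = S$; and second, that the boundary operator $\partial$ is linear. Both slices are, by hypothesis, well defined, which forces $T$ and $\partial T$ to be representable by integration (so $T$ lies effectively in $N_{\mathbb{H},k}(U)$), and hence the restrictions $T \mres \{f>t\}$, $(\partial T)\mres\{f>t\}$ and their complementary counterparts all make sense by the definition of $T \mres \chi_A$.

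First I would establish the complementation identity. Since $\{f>t\}$ and $\{f\leq t\}$ are disjoint Borel subsets of $U$ whose union is $U$, the additivity of the integral over disjoint domains in the definition of $T \mres \chi_A$ gives, for every test form $\omega \in \mathcal{D}_{\mathbb{H}}^k(U)$,
$$
(T \mres \{f>t\})(\omega) + (T \mres \{f\leq t\})(\omega) = \int_{U} \langle \omega \vert \overrightarrow T \rangle \, d\mu_T = T(\omega),
$$
where the last equality uses $T=\overrightarrow{T}\wedge\mu_T$. The same computation with $T$ replaced by $\partial T$ and $\mu_T$ by $\mu_{\partial T}$ yields the analogous statement for the boundary. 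Hence $T \mres \{f>t\} = T - T \mres \{f\leq t\}$ and $(\partial T)\mres\{f>t\} = \partial T - (\partial T)\mres\{f\leq t\}$.

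Then I would substitute these two equalities into the defining formula $\langle T,f,t+\rangle = (\partial T)\mres\{f>t\} - \partial\left( T\mres\{f>t\}\right)$ and expand using the linearity of $\partial$, namely $\partial\left( T - T\mres\{f\leq t\}\right) = \partial T - \partial\left( T\mres\{f\leq t\}\right)$. The two copies of $\partial T$ cancel, leaving exactly $\partial\left( T\mres\{f\leq t\}\right) - (\partial T)\mres\{f\leq t\}$, which is the first claimed identity. The second identity follows in the identical manner, now using that $\{f<t\}$ and $\{f\geq t\}$ are complementary in $U$, so that $T\mres\{f<t\} = T - T\mres\{f\geq t\}$ and $(\partial T)\mres\{f<t\} = \partial T - (\partial T)\mres\{f\geq t\}$, and substituting into the definition of $\langle T,f,t-\rangle$.

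There is no serious obstacle here: the argument is purely algebraic, resting only on the additivity of $\mres$ over complementary Borel sets and on the linearity of $\partial$, and in particular it makes no use of the sub-Riemannian structure. The only points requiring a word of care are the bookkeeping of signs when each complement is substituted, and checking that all four restrictions appearing are legitimate, which is guaranteed as soon as $T$ and $\partial T$ are representable by integration.
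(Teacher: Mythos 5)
Your proof is correct and follows essentially the same route as the paper's: writing $\{f>t\}$ as the complement of $\{f\leq t\}$ (and $\{f<t\}$ as the complement of $\{f\geq t\}$), using additivity of $\mres$ over complementary Borel sets together with linearity of $\partial$, and cancelling the two copies of $\partial T$. Your additional remark that the restrictions only make sense once $T$ and $\partial T$ are representable by integration is a point the paper's proof leaves implicit, but it does not change the argument.
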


\begin{proof}
We can compute directly, using the linearity of the definition of currents,
\begin{align*}
\langle T,f,t+ \rangle &= \left ( \partial T \right ) \mres \{ f>t \} -  \partial \left ( T \mres \{ f>t \} \right )\\
&= \left ( \partial T \right ) \mres \left ( \mathbb{H}^n \setminus \{ f \leq t \}   \right )  -  \partial \left ( T \mres \left ( \mathbb{H}^n \setminus \{ f \leq t \}   \right ) \right )\\
&= \partial T - \left ( \partial T \right ) \mres \{ f \leq t \}   -  \partial \left ( T - T  \mres \{ f \leq t \}  \right )\\
&=  \partial \left ( T  \mres \{ f \leq t \}  \right )  - \left ( \partial T \right ) \mres \{ f \leq t \}  .
\end{align*}
The same can be done for $   \langle  T,f,t - \rangle .$
\end{proof}


\subsection{Properties of Slices}\label{knotn}

In the next two propositions, we show seven properties for slices of Heisenberg currents. Specifically, Proposition \ref{first4properties} holds properties similarly true in Riemannian settings (compare with 4.2.1 in \cite{FED}) and indeed we do not see an explicit use of the sub-Riemannian geometry in the proofs.  
On the other hand, Proposition \ref{next3properties}, containing the remaining 
 properties, requires 
$k \neq n$, which carries deep consequences, especially when $n=1$. Furthermore, the proof of Proposition \ref{next3properties} is way more complex than in the similar Riemannian case and requires to explicitly work with the Rumin cohomology.   %
This work follows the Riemannian theory of Federer, in particular section 4.2.1 in \cite{FED}.

\begin{prop}\label{first4properties}
Consider an open set $U \subseteq \mathbb{H}^n$,  $T\in N_{\mathbb{H},k}(U)$, $f \in Lip(U,\mathbb{R})$, and $t \in \mathbb{R}$. Then we have the following properties:
\begin{enumerate}[(1)]
  \setcounter{enumi}{-1}
\item
$\left (  \mu_T + \mu_{\partial T}   \right ) \left (  \{ f=t \}  \right ) =0 \quad \quad \text{for all } t \text{ but at most countably many}.$
\item
$\langle T,f,t+ \rangle =\langle T,f,t- \rangle  \quad \quad \text{for all } t \text{ but at most countably many}.$
\item
$\spt \langle  T,f,t+ \rangle \subseteq f^{-1} \{t\}   \cap \spt T.$
\item
$ \partial  \langle  T,f,t+ \rangle   = -  \langle \partial  T,f,t+ \rangle    .$
\end{enumerate}
\end{prop}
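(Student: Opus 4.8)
The plan is to dispatch the four assertions largely independently. Throughout, since $T \in N_{\mathbb{H},k}(U)$, both $T$ and $\partial T$ are representable by integration with finite Radon measures $\mu_T$ and $\mu_{\partial T}$ (their total masses are $M(T)$ and $M(\partial T)$), and by Lemma \ref{intvalued} every restriction appearing below, such as $T\mres\{f>t\}$ or $(\partial T)\mres\{f>t\}$, is again a current of finite mass; hence all the expressions are well defined. For (0), set $\nu := \mu_T + \mu_{\partial T}$, a finite measure with $\nu(U) = M(T)+M(\partial T) < \infty$. The level sets $\{f=t\}$ are pairwise disjoint, so for each $m \in \mathbb{N}$ the set $\{\, t : \nu(\{f=t\}) > 1/m \,\}$ has at most $m\,\nu(U)$ elements; taking the union over $m$ shows $\nu(\{f=t\}) > 0$ for at most countably many $t$, which is precisely (0).

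For (1), I would compute the difference of the two slices directly. Using $\{f>t\}\cup\{f<t\} = \mathbb{H}^n\setminus\{f=t\}$ together with the linearity of $\mres$ and $\partial$, the two copies of $\partial T$ and of $\partial(T\mres(\,\cdot\,))$ cancel and one is left with
$$
\langle T,f,t+\rangle - \langle T,f,t-\rangle = \partial\bigl(T\mres\{f=t\}\bigr) - (\partial T)\mres\{f=t\}.
$$
For every $t$ outside the countable exceptional set of (0) we have $\mu_T(\{f=t\}) = \mu_{\partial T}(\{f=t\}) = 0$, and the restriction of a current representable by integration to a null set is the zero current; hence both terms on the right vanish and the two slices agree.

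For (2), the inclusion $\spt\langle T,f,t+\rangle \subseteq \spt T$ follows at once from $\spt(\partial S)\subseteq\spt S$ and $\spt(S\mres A)\subseteq\spt S$, applied to $S=T$ and $S=\partial T$. For the inclusion into $f^{-1}\{t\}$, I would fix a point $p$ with $f(p)\neq t$ and exhibit a neighbourhood on which the slice vanishes. If $f(p)>t$, continuity of $f$ gives a neighbourhood $V\subseteq\{f>t\}$; for a test form $\omega$ with $\spt\omega\subseteq V$ the factor $\chi_{\{f>t\}}$ equals $1$ on $\spt\omega$ and, by locality of the defining operator of $\partial$ (the first-order $d_Q$, or the second-order $D$ when $k=n+1$), also on $\spt(d_Q\omega)$; consequently $(\partial T)\mres\{f>t\}(\omega) = \partial T(\omega)$ and $\partial(T\mres\{f>t\})(\omega) = T(d_Q\omega) = \partial T(\omega)$ cancel. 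If instead $f(p)<t$, one chooses $V\subseteq\{f<t\}$, and now each of the two terms vanishes separately because the relevant integrands are supported in $\{f<t\}$, which is disjoint from $\{f>t\}$. Thus the slice is zero off $\{f=t\}$.

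Finally, (3) is a formal consequence of the fact that the Rumin complex is a complex, so the dual boundary satisfies $\partial\circ\partial = 0$. Expanding the left-hand side and dropping the term $\partial^2(T\mres\{f>t\}) = 0$ gives $\partial\langle T,f,t+\rangle = \partial\bigl((\partial T)\mres\{f>t\}\bigr)$, while inserting $\partial T$ into Definition \ref{defin:slices} and using $\partial^2 T = 0$ gives $\langle\partial T,f,t+\rangle = -\partial\bigl((\partial T)\mres\{f>t\}\bigr)$; comparing yields the identity. The only point requiring care is the bookkeeping in (2) across the dimension $k=n+1$, where $\partial$ is governed by the second-order operator $D$ rather than $d_Q$; but $D$ is still a differential operator, so $\spt(D\omega)\subseteq\spt\omega$, and the locality argument goes through uniformly in $k$. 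Everything else is formal, consistent with the remark that this proposition does not explicitly invoke the sub-Riemannian structure.
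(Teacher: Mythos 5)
Your proposal is correct and follows essentially the same route as the paper: (0) by the standard counting argument for finite measures, (1) by reducing the difference of the two slices to restrictions to the null set $\{f=t\}$, (2) by locality of the Rumin operators ($d_Q$, and $D$ when $k=n+1$) together with $\spt \partial T \subseteq \spt T$, and (3) by $\partial^2=0$. The only cosmetic differences are that the paper routes (1) and the case $f(p)<t$ of (2) through Lemma \ref{switchsigncurrent} (rewriting the slice with $\{f\leq t\}$) whereas you compute the difference directly and let each term vanish separately; both arguments are valid and rest on the same facts.
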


\begin{proof}
Property (0) holds as a general statement for measures. %
By Lemma \ref{switchsigncurrent},
\begin{align*}
\langle T,f,t+ \rangle 
&=  \partial \left ( T  \mres \{ f \leq t \}  \right )  - \left ( \partial T \right ) \mres \{ f \leq t \}  .
\end{align*}
Consider now $ T  \mres \{ f = t \}$ and notice that $ T  \mres \{ f = t \} \in R_{\mathbb{H},k} (U)$ by Lemma \ref{intvalued}, meaning that $ T  \mres \{ f = t \}$ is a current representable by integration. In particular, by property $(0)$,
$$
\left ( T  \mres \{ f = t \}  \right ) (\star)  = \int_{\{ f = t \}} \langle \star \vert \overrightarrow{T} \rangle d \mu_T =0, \quad  \text{for all } t \text{ but at most countably many}.
$$
 In the same way,  $\left ( \partial T \right ) \mres \{ f = t \} \in R_{\mathbb{H},k-1}(U) $ by hypothesis and so, again by property (0),
$$
\left (  \left ( \partial T \right )  \mres \{ f = t \}  \right ) (\star)  = \int_{\{ f = t \}} \langle \star \vert \overrightarrow{\partial T}(p) \rangle d \mu_{\partial T}(p) = 0 
$$
for all $t$  but at most countably many. So we can write that,  for all $t$  but at most countably many,
\begin{align*}
\langle T,f,t+ \rangle &=  \partial \left ( T  \mres \{ f \leq t \}  \right )  - \left ( \partial T \right ) \mres \{ f \leq t \}  \\
&=\partial \left ( T  \mres \{ f < t \}  \right )  - \left ( \partial T \right ) \mres \{ f < t \} + \partial \left ( T  \mres \{ f = t \}  \right )  - \left ( \partial T \right )  \mres \{ f = t \} \\
&=\partial \left ( T  \mres \{ f < t \}  \right )  - \left ( \partial T \right ) \mres \{ f < t \} =\langle T,f,t- \rangle .
\end{align*}
This proves property $(1)$.\\
Next we prove property $(3)$, leaving property $(2)$ as last. We have
\begin{align*}
\partial \langle T,f,t+ \rangle &= \partial \left (  \left ( \partial T \right ) \mres \{ f>t \} -  \partial \left ( T \mres \{ f>t \} \right )  \right )\\
&= \partial \left (  \left ( \partial T \right ) \mres \{ f>t \} \right ) -  \partial^2 \left ( T \mres \{ f>t \}   \right )\\
&= \partial \left (  \left ( \partial T \right ) \mres \{ f>t \} \right ) .
\end{align*}
On the other hand
\begin{align*}
- \langle  \partial T,f,t+ \rangle &= - \left (  \left [ \partial   \left ( \partial T \right ) \right ] \mres \{ f>t \}  -  \partial \left (  \left (  \partial T \right ) \mres \{ f>t \} \right )    \right )\\
&= -  \partial^2  T \mres \{ f>t \}  +  \partial \left (  \left (  \partial T \right ) \mres \{ f>t \} \right )  \\
&= \partial \left (  \left (  \partial T \right ) \mres \{ f>t \} \right ).
\end{align*}
So also property $(3)$ is verified. Only property $(2)$ is left, namely that $\spt \langle  T,f,t+ \rangle \subseteq f^{-1} \{t\}   \cap \spt T$.\\
Recalling Definition \ref{supportCurrent}, $p \in  \spt \langle  T,f,t+ \rangle$ if and only if for all neighbourhoods $U_p $ of $p$ 
 there exists a differential form $\overline\omega  \in \mathcal{D}^{k-1}_{\mathbb{H}}  (U_p) $ such that  $\langle  T,f,t+ \rangle (\overline\omega) \neq 0$ and $\spt \overline\omega \subseteq U_p$. This is the same as asking
\begin{equation}\label{recallingnot0}
 \left [  \left ( \partial T \right ) \mres \{ f>t \} -  \partial \left ( T \mres \{ f>t \} \right )  \right ] (\overline\omega) \neq 0.
\end{equation}
By contradiction, suppose that $p \notin  \spt T$, which means that there exists  a neighbourhood of $p$, $\tilde{U}_p $, 
 such that  $\tilde{U}_p \cap \spt T = \varnothing$.
By what we just noted,  $\tilde{U}_p $ is also such that  $\spt \overline\omega \subseteq \tilde{U}_p$, with $\overline\omega$ as above, and so $\spt \overline\omega   \cap \spt T = \varnothing$.

\noindent
Note then that, for $\alpha  \in \mathcal{D}^{k-1}_{\mathbb{H}}  (U) $,  $ \partial T (\alpha) = T(d_c \alpha)$ (where $d_c$ is the Rumin complex operator in general dimension, see Definition \ref{complexHn}), hence 
$\spt \partial T  \subseteq   \spt T$. Then 
$$
\spt \overline\omega   \cap \spt \partial T = \varnothing.
$$
But this is a contradiction with equation (\ref{recallingnot0}), so we have that  $p \in \spt T$.  
Consider now 
$p \in  \spt \langle  T,f,t+ \rangle$ as above and, by contradiction again, suppose than $p \notin  f^{-1} \{t\}$:
$$
p \notin  f^{-1} \{t\} \ \Longleftrightarrow \ f(p)\neq t \ \Longleftrightarrow \ f(p) >t   \text{ or }  f(p) <t.
$$
By hypothesis there exists a neighbourhood $U_p$ of $p$  
 and a differential form $\overline\omega  \in \mathcal{D}^{k-1}_{\mathbb{H}}  (U) $ such that $\spt \overline\omega \subseteq U_p$ and equation (\ref{recallingnot0}) holds. In particular, we can choose $U_p$ so that $U_p \subseteq \{f \neq t\}$. 
If $ f(p) >t$, then ${U}_p \subseteq \{f>t\}$, $\chi_{\{ f>t \}}  \overline\omega =   \overline\omega  $ and  
$$
 \left [  \left ( \partial T \right ) \mres \{ f>t \}  \right ] (\overline\omega) =
   \left ( \partial T \right )        \left ( \chi_{\{ f>t \}}  \overline\omega  \right ) =
   \left ( \partial T \right )        \left (   \overline\omega  \right ) =
T     \left (  d_c    \overline\omega \right ) .
$$
 In a similar way,
$$
 \left [   \partial \left ( T \mres \{ f>t \} \right )  \right ] (\overline\omega)=
\left ( T \mres \{ f>t \} \right )   (d_c \overline\omega) =
T   ( \chi_{\{ f>t \}} d_c \overline\omega)=
T   (  d_c \overline\omega).
$$
So
$$
\langle  T,f,t+ \rangle  (\overline\omega)= \left [  \left ( \partial T \right ) \mres \{ f>t \} -  \partial \left ( T \mres \{ f>t \} \right )  \right ] (\overline\omega) = 0
$$
which is a contradiction. If $ f(p) <t$, then $\spt \overline\omega \subseteq {U}_p \subseteq \{f < t\} \subseteq \{f \leq t\}$ and we have
$$
 \left [   \partial \left ( T \mres \{ f \leq t \} \right )  \right ] (\overline\omega)=
\left ( T \mres \{ f \leq t \} \right )   (d_c \overline\omega) =
T   ( \chi_{\{ f \leq t \}} d_c \overline\omega)=
T   (  d_c \overline\omega)
$$
and
$$
 \left [  \left ( \partial T \right ) \mres \{ f \leq t \}  \right ] (\overline\omega) =
   \left ( \partial T \right )        \left ( \chi_{\{ f \leq t \}}  \overline\omega  \right ) =
   \left ( \partial T \right )        \left (   \overline\omega  \right ) =
T     \left (  d_c    \overline\omega \right ) .
$$
Again, using Lemma \ref{switchsigncurrent},
$$
\langle  T,f,t+ \rangle  (\overline\omega)= \left [   \partial \left ( T \mres \{ f \leq t \} \right )   -   \left ( \partial T \right ) \mres \{ f \leq t \}  \right ] (\overline\omega)
= 0
$$
which is a contradiction.
 This complete the proof.
\end{proof}

\noindent
As the proof showed, the geometry of the Heisenberg group and the Rumin complex, although present, did not play a role in the previous properties. Now we show further properties for which the Rumin cohomology does play a bigger role.

\begin{prop}\label{next3properties}
Consider an open set $U \subseteq \mathbb{H}^n$, $T\in N_{\mathbb{H},k+1}(U)$, $f \in Lip(U,\mathbb{R})$, $t \in \mathbb{R}$ and $k\neq n$. Then the following properties hold:
\begin{enumerate}[(1)]
  \setcounter{enumi}{3}
\item
$M \left ( \langle T,f,t+ \rangle \right ) \leq    \ Lip(f) \liminf\limits_{h \rightarrow 0+} \frac{1}{h} \mu_T \left (   U \cap \{ t < f < t+h \}  \right )$.
\item
$\int_a^b M \left ( \langle T,f,t+ \rangle \right ) dt \leq    \ Lip(f) \mu_T \left (   U \cap \{a < f < b \}  \right ), \quad a,b \in \mathbb{R}$.
\item
$ \langle  T,f,t+ \rangle \in N_{\mathbb{H},k}(U)  \quad  \text{for a.e. } t$.
\end{enumerate}
\end{prop}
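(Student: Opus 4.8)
The plan is to establish the three estimates in the order (4), (5), (6): (5) follows from (4) by integration, and (6) combines (4)--(5) with the support and boundary information already secured in Proposition \ref{first4properties}. The one genuinely sub-Riemannian ingredient, needed for (4), is a Leibniz rule for the Rumin differential, and this is exactly where the hypothesis $k\neq n$ is used.

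For (4), I would replace the sharp cut $\chi_{\{f>t\}}$ by a Lipschitz ramp. Fix $h>0$ and let $g_h:\mathbb{R}\to[0,1]$ be the piecewise-linear function equal to $0$ on $(-\infty,t]$, to $(s-t)/h$ on $(t,t+h)$, and to $1$ on $[t+h,\infty)$; then $g_h\circ f\in Lip(U)$ and $g_h\circ f\to\chi_{\{f>t\}}$ boundedly and pointwise as $h\to0+$. Setting
\[
\Sigma_h:=(\partial T)\mres(g_h\circ f)-\partial\big(T\mres(g_h\circ f)\big),
\]
I would compute, for a test form $\omega\in\mathcal{D}_{\mathbb{H}}^{k}(U)$ and using the definitions of $\partial$ and $\mres$ together with representability by integration,
\[
\Sigma_h(\omega)=T\big(d_c\big((g_h\circ f)\,\omega\big)\big)-T\big((g_h\circ f)\,d_c\omega\big),
\]
where $d_c=d_c^{(k)}$ is the Rumin differential on $k$-forms (Definition \ref{complexHn}). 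The crucial step is the Leibniz identity $d_c((g_h\circ f)\omega)=d(g_h\circ f)\wedge\omega+(g_h\circ f)\,d_c\omega$, which collapses the above to $T\big(d(g_h\circ f)\wedge\omega\big)$. Since $d(g_h\circ f)=\tfrac1h\chi_{(t,t+h)}(f)\,df$ almost everywhere, this equals $\tfrac1h\,\big(T\mres\{t<f<t+h\}\big)(df\wedge\omega)$. Letting $h\to0+$, the left-hand side converges to $\langle T,f,t+\rangle(\omega)$ (using that $\partial T$ is representable by integration, so the first term converges by dominated convergence, and weak continuity of $\partial$ for the second). Bounding $\|df\wedge\omega\|^{*}\le Lip(f)\,\|\omega\|^{*}$ and taking the supremum over $\omega$ with comass $\le 1$ gives (4).

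For (5), I would integrate (4) over $t\in(a,b)$, move $\int_a^b\liminf_h$ inside by Fatou, and then use Fubini:
\[
\int_a^b \mu_T\big(U\cap\{t<f<t+h\}\big)\,dt=\int_U \mathcal{L}^1\!\big((a,b)\cap(f(p)-h,f(p))\big)\,d\mu_T(p)\le h\,\mu_T\big(U\cap\{a<f<b+h\}\big),
\]
so that $\tfrac1h\int_a^b(\cdots)\,dt\le \mu_T(U\cap\{a<f<b+h\})$; letting $h\to0+$ and using monotone continuity of $\mu_T$ (with property (0)) yields the stated inequality. For (6), note first that $\langle T,f,t+\rangle$ has compact support, since by Proposition \ref{first4properties}(2) it is contained in $f^{-1}\{t\}\cap\spt T$ and $\spt T$ is compact. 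By (5) with $M(T)=\mu_T(U)<\infty$ we get $M(\langle T,f,t+\rangle)<\infty$ for a.e. $t$. For the boundary, Proposition \ref{first4properties}(3) gives $\partial\langle T,f,t+\rangle=-\langle\partial T,f,t+\rangle$; since $\partial T\in N_{\mathbb{H},k}(U)$ (finite mass, compact support, $\partial(\partial T)=0$), applying the estimate (5) to $\partial T$ gives $M(\partial\langle T,f,t+\rangle)<\infty$ for a.e. $t$. Intersecting the two full-measure sets of good $t$ proves (6).

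The main obstacle, and the only step that truly uses the Heisenberg structure, is the Leibniz identity for $d_c$ (the content of Lemma \ref{L2}). I would justify it by first verifying that multiplication by a scalar function preserves the Rumin spaces, i.e. $g(\alpha\wedge\theta+\beta\wedge d\theta)\in I^k$ and $g\omega\in J^k$ whenever $\omega\in J^k$, so that $g\omega$ is a legitimate Rumin form; and then that $d_c(g\omega)=dg\wedge\omega+g\,d_c\omega$ descends to $\Omega^{k+1}/I^{k+1}$ when $k+1\le n$, and holds in $J^{k+1}$ when $k+1\ge n+1$ (checking $dg\wedge\omega\in J^{k+1}$ and $d(J^k)\subseteq J^{k+1}$). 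This identity is precisely what fails for the second-order operator $D$, which carries the Rumin correction $\mathcal{L}(\alpha)\wedge\theta$; the argument therefore breaks when $d_c$ acts on middle-dimensional forms, that is when $k=n$, which is the role of the hypothesis $k\neq n$ (and, for the boundary term in (6), the analogous first-order condition on $d_c^{(k-1)}$). A secondary, purely technical point is legitimizing the a.e. relation $d(g_h\circ f)=g_h'(f)\,df$ and the limits when $f$ is only Lipschitz; I would handle this through the representability of $T$ and $\partial T$ by integration against finite Radon measures, which converts each current evaluation into a Lebesgue integral amenable to dominated and monotone convergence.
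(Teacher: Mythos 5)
Your architecture coincides with the paper's: a Lipschitz ramp replacing $\chi_{\{f>t\}}$, a Leibniz rule for $d_c$ away from the middle dimension, reduction to $T\left(d(g_h\circ f)\wedge\omega\right)$ supported in the transition layer $\{t<f<t+h\}$, then integration in $t$ for (5) and combination with Proposition \ref{first4properties}(3) for (6). The genuine gap is the one inequality you assert without proof, $\norm{df\wedge\omega}^{*}\le Lip(f)\,\norm{\omega}^{*}$. In $\mathbb{H}^n$ one has $df=\sum_{j=1}^{2n}W_jf\,dw_j+Tf\,\theta$, and the Lipschitz constant of $f$ with respect to the Carnot--Carath\'eodory (or Kor\'anyi) distance controls only the horizontal part $\nabla_{\mathbb{H}}f$; the vertical derivative $Tf$ is \emph{not} bounded by $Lip(f)$, since $T$ is homogeneous of order $2$. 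So the comass bound is false as a statement about the form $df\wedge\omega$ by itself. What rescues the estimate --- and what the paper spends Lemmas \ref{L4} and \ref{L5} plus the shuffle computation in the final proof on --- is that the $\theta$-component never reaches the current: for $k<n$ because $\overrightarrow{T}$ takes values in ${\prescript{}{H}\bigwedge}_{k+1}$, a space of horizontal simple vectors, so $\langle \theta\wedge\omega\vert\overrightarrow{T}\rangle=0$; for $k>n$ because $\omega\in J^k$ already carries $\theta$ as a factor, so $\theta\wedge\omega=0$. Only after discarding $Tf\,\theta$ can one use $|\nabla_{\mathbb{H}}f|\le Lip(f)$ and carry out the comass estimate on $\sum_{j=1}^{2n}dw_j\wedge\omega$. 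This is a second, independent place where the hypothesis $k\neq n$ and the sub-Riemannian structure enter, which your proposal attributes entirely to the Leibniz identity; at $k=n$ both mechanisms fail at once ($\overrightarrow{T}\in{\prescript{}{H}\bigwedge}_{n+1}$ contains the vertical direction and $\omega\in\Omega^n/I^n$ need not annihilate $\theta$), on top of $D$ not obeying a first-order Leibniz rule.

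Two smaller points. You apply $d_c$ to $(g_h\circ f)\,\omega$ with $g_h\circ f$ merely Lipschitz; the paper first interposes a uniform smooth approximation $g_i\rightrightarrows\gamma_h\circ f$ with $\spt dg_i\subseteq\{t<f<t+h\}$ and $Lip(g_i)\to Lip(\gamma_h\circ f)$ (Lemmas \ref{lemmag_i--faseII} and \ref{faseIII}), and it is through $\lim_i Lip(g_i)\le Lip(f)/h$ that the factor $Lip(f)/h$ is actually harvested; your a.e.\ chain rule shortcut is repairable along those lines but, as written, differentiates a non-smooth Rumin form. In (5), your Fubini computation yields $\mu_T\left(U\cap\{a<f<b+h\}\right)$, which tends to $\mu_T\left(U\cap\{a<f\le b\}\right)$ and exceeds the stated bound whenever $\mu_T(\{f=b\})>0$; this is fixed either by the paper's monotone-function argument with $F(t)=\mu_T(U\cap\{f<t\})$ or by passing through values $b'<b$ with $\mu_T(\{f=b'\})=0$ and using property (0).
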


\noindent
The case $k=n$ present several differences from what we show here and, although work in that direction is ongoing, one can very easily expect differences in the final result. This comes with deep consequences as these properties are meant to be tools to help develop a compactness theorem for currents in the Heisenberg group. In detail, this corroborates that the Riemannian approach is not effective here and that new ideas are necessary. Furthermore, this also suggests that the study in the first Heisenberg group $\mathbb{H}^1$ diverges from the other cases' because, when $n=1$, then $k=n (=1)$ is the most important situation.\\

\noindent The first point is the most complicated to prove. For this reason we first contruct some machinary and show some lemmas.

\begin{lem}\label{lemmagamma}
Consider an open set $U \subseteq \mathbb{H}^n$, $f \in Lip(U,\mathbb{R})$, $t \in \mathbb{R}$, $h>0$ fixed and $s \in \mathbb{R}$. Then consider the function
\begin{equation*}
\gamma_h (s) := \frac{|s-t| - |s-(t+h)| +h}{2h}.
\end{equation*}
One can observe that
$$
\gamma_h \circ f (p) =
\begin{cases}
0 , \quad & f(p)\leq t,\\
\frac{f(p)-t}{h} , \quad & t < f(p) < t+h,\\
1 , \quad & f(p) \geq t+h,
\end{cases}
$$
$$
\gamma_h \circ f \in \ Lip(U,\mathbb{R}) \quad \text{and} \quad Lip(\gamma_h \circ f) \leq \frac{Lip(f)}{h}.
$$
\end{lem}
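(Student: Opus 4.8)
The plan is to prove all three assertions by elementary means: the explicit piecewise formula follows from a direct case analysis on the signs of $s-t$ and $s-(t+h)$, while both Lipschitz claims reduce to the fact that the modulus function is $1$-Lipschitz together with the standard composition rule for Lipschitz maps. No sub-Riemannian input is needed here; $\gamma_h$ is a scalar function of a real variable and $f$ enters only through its Lipschitz constant.

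First I would verify the piecewise description of $\gamma_h \circ f$. Since $\gamma_h$ is built from $|s-t|$ and $|s-(t+h)|$, it suffices to distinguish the three regions determined by the breakpoints $t$ and $t+h$. When $s \leq t$, both arguments are non-positive, so $|s-t| = t-s$ and $|s-(t+h)| = (t+h)-s$, and the numerator collapses to $0$. When $t < s < t+h$, one has $|s-t| = s-t$ and $|s-(t+h)| = (t+h)-s$, the numerator equals $2(s-t)$, and hence $\gamma_h(s) = (s-t)/h$. When $s \geq t+h$, both arguments are non-negative, the numerator equals $2h$, and $\gamma_h(s) = 1$. Substituting $s = f(p)$ gives exactly the three stated cases.

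Next I would establish the Lipschitz bound for $\gamma_h$ itself. Applying the reverse triangle inequality $\bigl|\,|a| - |b|\,\bigr| \leq |a-b|$ to each modulus term, for any $s_1, s_2 \in \mathbb{R}$ one obtains
$$
|\gamma_h(s_1) - \gamma_h(s_2)| \leq \frac{1}{2h} \Bigl( |s_1 - s_2| + |s_1 - s_2| \Bigr) = \frac{|s_1 - s_2|}{h},
$$
so $Lip(\gamma_h) \leq 1/h$; the same value is read off the piecewise form, whose slope is $0$ or $1/h$. Finally, since $f : (U, d_{\mathbb{H}}) \to \mathbb{R}$ is Lipschitz and $\gamma_h : \mathbb{R} \to \mathbb{R}$ is Lipschitz, the composition satisfies $Lip(\gamma_h \circ f) \leq Lip(\gamma_h)\, Lip(f) \leq Lip(f)/h$, which is the last claim and in particular shows $\gamma_h \circ f \in Lip(U, \mathbb{R})$.

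There is essentially no obstacle: this is a routine verification lemma, and the only mild care required is to keep track of which distance each Lipschitz constant refers to (the Heisenberg distance $d_{\mathbb{H}}$ on $U$ for $f$, the Euclidean distance on $\mathbb{R}$ for $\gamma_h$) so that the composition estimate is applied correctly. The point of the construction is that $\gamma_h$ is built to interpolate continuously from $0$ on $\{f \leq t\}$ to $1$ on $\{f \geq t+h\}$, with its only variation concentrated on the slab $\{t < f < t+h\}$; this makes $\gamma_h \circ f$ a Lipschitz, hence admissible, approximation of $\chi_{\{f>t\}}$, which is exactly what will be needed in the proof of property (4) of Proposition \ref{next3properties}.
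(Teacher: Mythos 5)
Your proof is correct, and it takes a mildly but genuinely different route for the Lipschitz estimate. The paper's proof disposes of the piecewise formula in one line (as you do, just with less detail) and then verifies the Lipschitz inequality for $\gamma_h \circ f$ by a direct computation, but only for the case where both points satisfy $t < f < t+h$, where $\gamma_h \circ f = (f-t)/h$; the remaining cases (one or both points outside the middle slab) are left implicit. You instead first bound $Lip(\gamma_h) \leq 1/h$ on all of $\mathbb{R}$ via the reverse triangle inequality applied to the two modulus terms in the definition of $\gamma_h$, and then invoke the composition rule $Lip(\gamma_h \circ f) \leq Lip(\gamma_h)\,Lip(f)$. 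Your version handles all pairs of points in $U$ uniformly and makes the role of the two metrics (Heisenberg on $U$, Euclidean on $\mathbb{R}$) explicit, so it is the more complete of the two arguments; the paper's version is shorter but relies on the reader filling in the cross-region cases. Both are elementary and reach the same conclusion.
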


\begin{proof}
The computation of $\gamma_h \circ f$ follows immediately from the definition. Then, for $p,q \in U$ and considering $ t < f < t+h$, 
$$
\left | \gamma_h \circ f ( p) - \gamma_h \circ f ( q)     \right | = \left | \frac{f(p)-t}{h} - \frac{f(q)-t}{h}    \right | \leq   \frac{|f(p)-f(q)|}{h}    \leq   \frac{Lip(f)}{h} d_{\mathbb{H}}(p,q)    .
$$
This implies that $\gamma_h \circ f \in \ Lip(U,\mathbb{R})$ and, since its Lipschitz constant is the smallest for which the inequality holds, also $Lip(\gamma_h \circ f) \leq \frac{Lip(f)}{h}$ is verified.
\end{proof}

\begin{lem}\label{faseI}
Consider an open set $U \subseteq \mathbb{H}^n$,  $T\in N_{\mathbb{H},k+1}(U)$,  $f \in Lip(U,\mathbb{R})$, $t \in \mathbb{R}$, $h>0$ fixed and consider the function $\gamma_h$ defined in Lemma \ref{lemmagamma}. Then
\begin{align*}
M (\langle T,f,t+ \rangle) \leq   \liminf\limits_{h \rightarrow 0+}  M  \big ( 
 \left ( \partial T \right ) \mres{\left ( \gamma_h \circ f \right )} -  \partial \left ( T \mres{\left ( \gamma_h \circ f \right )} \right ) 
\big ).
\end{align*}
\end{lem}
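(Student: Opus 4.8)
The plan is to read the right-hand side as an instance of the lower semicontinuity of mass. I would introduce the approximating currents
$$S_h := \left ( \partial T \right ) \mres{\left ( \gamma_h \circ f \right )} -  \partial \left ( T \mres{\left ( \gamma_h \circ f \right )} \right ),$$
show that $S_h \to \langle T,f,t+ \rangle$ weakly as $h \to 0+$, and then use that $M$ never increases under a weak limit. Since $T \in N_{\mathbb{H},k+1}(U)$, both $T$ and $\partial T$ are representable by integration with $\mu_T(U)=M(T)<\infty$ and $\mu_{\partial T}(U)=M(\partial T)<\infty$ (Corollary \ref{mass=T}), so each $\mres$ by the bounded Lipschitz function $\gamma_h \circ f$ is legitimate through the integral formula $T \mres g = \overrightarrow{T} \wedge g\mu_T$ valid for bounded measurable $g$.

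First I would fix a test form $\omega \in \mathcal{D}_{\mathbb{H}}^k(U)$ and expand $S_h(\omega)$ term by term through the integral representations. Using $(\partial T)\mres g\,(\omega) = (\partial T)(g\omega) = \int g \langle \omega \vert \overrightarrow{\partial T} \rangle \, d\mu_{\partial T}$ together with $\partial(T \mres g)(\omega) = (T \mres g)(d_c \omega) = \int g \langle d_c \omega \vert \overrightarrow{T} \rangle \, d\mu_T$, with $g=\gamma_h \circ f$, I obtain
$$S_h(\omega) = \int (\gamma_h \circ f) \langle \omega \vert \overrightarrow{\partial T} \rangle \, d\mu_{\partial T} - \int (\gamma_h \circ f) \langle d_c \omega \vert \overrightarrow{T} \rangle \, d\mu_T.$$
By Lemma \ref{lemmagamma} one has $\gamma_h \circ f \to \chi_{\{f>t\}}$ pointwise as $h \to 0+$ together with $0 \leq \gamma_h \circ f \leq 1$. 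Since $\omega$ and $d_c\omega$ are bounded with compact support and the measures $\mu_T,\mu_{\partial T}$ are finite, dominated convergence shows that $S_h(\omega)$ converges, as $h \to 0+$, to
$$\int_{\{f>t\}} \langle \omega \vert \overrightarrow{\partial T} \rangle \, d\mu_{\partial T} - \int_{\{f>t\}} \langle d_c \omega \vert \overrightarrow{T} \rangle \, d\mu_T = \langle T,f,t+ \rangle(\omega),$$
the last equality being just the definition of the slice rewritten through the integral representations. This is the desired weak convergence $S_h \to \langle T,f,t+ \rangle$.

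To finish I would invoke lower semicontinuity directly: for any $\omega$ with comass $\norm{\omega(p)}^* \leq 1$ everywhere one has $S_h(\omega) \leq M(S_h)$ for each $h$, so passing to the $\liminf$ and using the convergence just established gives $\langle T,f,t+ \rangle(\omega) = \lim_{h\to 0+} S_h(\omega) \leq \liminf_{h\to 0+} M(S_h)$. Taking the supremum over all such $\omega$ and recalling Definition \ref{masscomass} yields $M(\langle T,f,t+ \rangle) \leq \liminf_{h\to 0+} M(S_h)$, which is exactly the claim.

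The main obstacle I anticipate is bookkeeping rather than conceptual. One must keep in mind that $\gamma_h \circ f$ is only Lipschitz, not smooth, so $\mres(\gamma_h\circ f)$ has to be understood through the integral definition for bounded measurable functions against a representable current, and the identity $\partial(T\mres g)(\omega)=(T\mres g)(d_c\omega)$ must be verified at the level of the integral pairing rather than by differentiating a product of a form with a non-smooth factor. The hypothesis $T\in N_{\mathbb{H},k+1}(U)$ is precisely what makes both $\mu_T$ and $\mu_{\partial T}$ finite and thereby licenses the dominated convergence step; I emphasise that no use of the restriction $k\neq n$ is required here, that hypothesis entering only in the subsequent lemmas.
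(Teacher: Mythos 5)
Your proposal is correct, and the core computation is the same as the paper's: everything reduces to the fact that $\chi_{\{f>t\}}-\gamma_h\circ f$ is bounded by $\chi_{\{t<f<t+h\}}$, so the relevant integrals against $\mu_{\partial T}$ and $\mu_T$ vanish as $h\to 0+$ because both measures are finite. Where you diverge is in the final packaging. The paper proves the stronger statement that $S_h:=(\partial T)\mres(\gamma_h\circ f)-\partial(T\mres(\gamma_h\circ f))$ converges to $\langle T,f,t+\rangle$ \emph{in mass norm}, i.e. $M\left(\langle T,f,t+\rangle-S_h\right)\to 0$ (the estimate $\int\chi_{\{t<f<t+h\}}\left\vert\langle\omega\vert\overrightarrow{\partial T}\rangle\right\vert d\mu_{\partial T}\leq\mu_{\partial T}(\{t<f<t+h\})$ is uniform over test forms of comass at most $1$, which is what lets one pass the limit through the supremum), and then concludes by the triangle inequality $M(\langle T,f,t+\rangle)\leq M(\langle T,f,t+\rangle-S_h)+M(S_h)$. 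You instead establish only the weak convergence $S_h(\omega)\to\langle T,f,t+\rangle(\omega)$ for each fixed $\omega$ and invoke lower semicontinuity of mass, which you then prove by exactly the sup-over-test-forms argument; this is logically sound and slightly more economical, since pointwise dominated convergence suffices and no uniformity in $\omega$ is needed. The trade-off is that the paper's route yields the quantitatively stronger mass-norm convergence as a by-product, whereas yours yields only the inequality claimed in the lemma. Your closing remarks are also accurate: the restriction $k\neq n$ plays no role at this stage, and the hypothesis $T\in N_{\mathbb{H},k+1}(U)$ enters precisely to make $\mu_T$ and $\mu_{\partial T}$ finite and the restrictions $T\mres(\gamma_h\circ f)$ meaningful via the integral definition.
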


\begin{proof}
Let's start by considering
\begin{align*}
 &M  \left ( \langle T,f,t+ \rangle  -    \left ( \partial T \right ) \mres{\left (\gamma_h \circ f \right )} +  \partial \left ( T \mres{\left (\gamma_h \circ f \right )} \right ) \right ) \\
= & M  \left (   \left ( \partial T \right ) \mres{\chi_{\{ f>t\}}}- \partial \left ( T \mres{\chi_{\{ f>t\}}} \right )  -    \left ( \partial T \right ) \mres{\left (\gamma_h \circ f \right )} + \partial \left ( T \mres{\left (\gamma_h \circ f \right )} \right ) \right ) \\
= & M  \left ( 
  \left ( \partial T \right )
 \mres{  \left (  \chi_{\{ f>t\}} - \gamma_h \circ f    \right )}
 + 
\partial \left (
 T \mres{ \left (      \gamma_h \circ f  -   \chi_{\{ f>t\}}    \right )} 
  \right )
\right ) \\
\leq & M  \left (   \left ( \partial T \right ) \mres{  \left (  \chi_{\{ f>t\}} - \gamma_h \circ f    \right )} \right )
 + 
M\left ( \partial \left ( T \mres{ \left (      \gamma_h \circ f  -   \chi_{\{ f>t\}}    \right )}   \right ) \right ).
\end{align*} 
Let's estimate the two terms independently. By construction $\chi_{\{ f>t\}} - \gamma_h \circ f  =0$ on $\{ f\geq t+h \}$ and $\chi_{\{ f>t\}} - \gamma_h \circ f \leq 1$ on $\{ t < f< t+h \}$, so 
$$
\chi_{\{ f>t\}} - \gamma_h \circ f \leq \chi_{\{ t < f< t+h \}}.
$$ 
Then, for $\omega \in \mathcal{D}_{\mathbb{H}}^{k-1}(U)$,
\begin{align*}
\left \vert 
\left (  \left (  \partial T \right )  \mres \left (  \chi_{\{ f>t\}} - \gamma_h \circ f   \right )  \right ) \left ( \omega   \right ) 
\right \vert 
=& 
\left \vert
 \int_{U }  \langle
\left (  \chi_{\{ f>t\}} - \gamma_h \circ f   \right ) \omega
 \vert \overrightarrow{\partial T} \rangle d \mu_{\partial T} 
\right \vert \\
\leq &  \int_{U }  
\chi_{\{ t < f< t+h \}} 
\left \vert
 \langle \omega  \vert \overrightarrow{\partial T} \rangle 
\right \vert
d \mu_{\partial T} \xrightarrow[h\to 0]{} 0 
\end{align*}
by monotone convergence theorem, which allows the limit over the integral. 
 For the second term we have:
\begin{align*}
\left \vert 
\left ( \partial \left ( T \mres{ \left (      \gamma_h \circ f  -   \chi_{\{ f>t\}}    \right )}   \right ) \right )   \left ( \omega   \right )
\right \vert 
=& 
\left \vert 
 \int_{U} \langle  \left ( \chi_{\{ f>t\}} - \gamma_h \circ f   \right ) d_Q \omega   \vert \overrightarrow{ T} \rangle d \mu_{T}
\right \vert  \\
\leq&  \int_{U}   \chi_{\{ t < f < t+h \}}  \left \vert  \langle  d_Q \omega   \vert \overrightarrow{ T} \rangle \right \vert  d \mu_{T}  \xrightarrow[h\to 0]{} 0 .
\end{align*} 
Then we have that
\begin{align*}
&
 M  \left (   \left ( \partial T \right ) \mres{  \left (  \chi_{\{ f>t\}} - \gamma_h \circ f    \right )} \right )
\\
=&
 \sup_{\norm{\omega}^* \leq 1}   \left ( \partial T \right ) \mres{  \left (  \chi_{\{ f>t\}} - \gamma_h \circ f    \right )} \left ( \omega  \right )
\\
=&
 \sup_{\norm{\omega}^* \leq 1} 
  \left ( \partial T \right ) \mres{  \left (  \chi_{\{ f>t\}} - \gamma_h \circ f    \right )} \left ( \omega  \right )
\xrightarrow[h\to 0]{} 0 .
\end{align*} 
Likewise,
\begin{align*}
M \left ( \partial \left ( T \mres{ \left (      \gamma_h \circ f  -   \chi_{\{ f>t\}}    \right )}   \right ) \right )  
  \xrightarrow[h\to 0]{} 0 .
\end{align*} 
Putting the two terms together, we get
\begin{align*}
 &
 M  \left ( \langle T,f,t+ \rangle  -    \left ( \partial T \right ) \mres{\left (\gamma_h \circ f \right )} +  \partial \left ( T \mres{\left (\gamma_h \circ f \right )} \right ) \right )
\\
\leq &
 M  \left (   \left ( \partial T \right ) \mres{  \left (  \chi_{\{ f>t\}} - \gamma_h \circ f    \right )} \right )
 + 
M\left ( \partial \left ( T \mres{ \left (      \gamma_h \circ f  -   \chi_{\{ f>t\}}    \right )}   \right ) \right ) 
 \xrightarrow[h\to 0]{} 0 .
\end{align*} 
This also means that
\begin{align*}
 M  \left ( \langle T,f,t+ \rangle  -    \left ( \partial T \right ) \mres{\left (\gamma_h \circ f \right )} +  \partial \left ( T \mres{\left (\gamma_h \circ f \right )} \right ) \right ) 
  \xrightarrow[h\to 0]{} 0 .
\end{align*} 
Finally we observe
\begin{align*}
M (\langle T,f,t+ \rangle) \leq &   M  \left ( \langle T,f,t+ \rangle  -    \left ( \partial T \right ) \mres{\left (\gamma_h \circ f \right )} +  \partial \left ( T \mres{\left (\gamma_h \circ f \right )} \right ) \right ) \\
&  +   M  \left (     \left ( \partial T \right ) \mres{\left (\gamma_h \circ f \right )} -  \partial \left ( T \mres{\left (\gamma_h \circ f \right )} \right ) \right )
\end{align*} 
and, passing to the $\liminf$ for $h \to 0$, we obtain the claim.
\end{proof}


\begin{lem}\label{lemmag_i--faseII}
Consider an open set $U \subseteq \mathbb{H}^n$, $f \in Lip(U,\mathbb{R})$, $t \in \mathbb{R}$, $h>0$ fixed and consider the function $\gamma_h$ defined in Lemma \ref{lemmagamma}. Then we can approximate $\gamma_h \circ f$ uniformly by functions $g_i\in C^\infty (U,\mathbb{R})$ (notationally $ g_i \rightrightarrows   \gamma_h \circ f  $),  so that
$$
 \spt d g_i \subseteq  \{ t < f < t+h \}  \quad \text{and} \quad  \lim\limits_{i \to \infty}  Lip(g_i) = Lip(\gamma_h \circ f).
$$
\end{lem}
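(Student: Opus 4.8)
The plan is to reduce the whole statement to the approximation of the single scalar-valued Lipschitz function $w := \gamma_h \circ f$, which by Lemma \ref{lemmagamma} vanishes on $\{f \leq t\}$, equals $1$ on $\{f \geq t+h\}$, takes values in $[0,1]$, and has $Lip(w) = Lip(\gamma_h \circ f) \leq \frac{Lip(f)}{h}$. I would build the $g_i$ in two stages. First, I would smooth $w$ by left-invariant mollification on $\mathbb{H}^n$: extend $w$ to a Lipschitz function on all of $\mathbb{H}^n$ with the same constant (McShane--Whitney extension in the metric space $(\mathbb{H}^n, d_{\mathbb{H}})$, clamped to $[0,1]$ so values stay in $[0,1]$), and set $w_\epsilon := w * \phi_\epsilon$ for a standard mollifier $\phi_\epsilon$ concentrating at the identity. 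Group convolution gives $w_\epsilon \in C^\infty$ with $w_\epsilon \rightrightarrows w$ uniformly; crucially, since $d_{\mathbb{H}}$ is left invariant, one has $|w_\epsilon(p)-w_\epsilon(p')| \leq \int |w(q^{-1}p)-w(q^{-1}p')|\,\phi_\epsilon(q)\,dq \leq Lip(w)\,d_{\mathbb{H}}(p,p')$, so that $Lip(w_\epsilon) \leq Lip(w)$.

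Mollification alone does \emph{not} suffice, because $w_\epsilon$ is in general not constant near $\{f \leq t\}$ or $\{f \geq t+h\}$, so $\spt dw_\epsilon$ would spill slightly outside the open strip $\{t < f < t+h\}$. To repair this I would introduce, for $\delta > 0$, a smooth nondecreasing squashing function $\phi_\delta : \mathbb{R} \to [0,1]$ with $\phi_\delta \equiv 0$ on $(-\infty,\delta]$, $\phi_\delta \equiv 1$ on $[1-\delta,+\infty)$, and $Lip(\phi_\delta) \to 1$ as $\delta \to 0$ (this limit is the key point: a squashing with flat ends rising by $1$ over an interval of length $1-2\delta$ forces $Lip(\phi_\delta) \geq \frac{1}{1-2\delta}$, but this still tends to $1$). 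Composing flattens the two ends and creates the buffer needed for the support condition, while costing only the factor $Lip(\phi_\delta) \to 1$ in the Lipschitz constant, which is exactly what keeps the bound sharp.

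I would then combine the two stages diagonally: pick $\delta_i \to 0$, and for each $i$ choose $\epsilon_i$ so small that $\|w_{\epsilon_i} - w\|_\infty < \delta_i$, and set $g_i := \phi_{\delta_i} \circ w_{\epsilon_i}$ (restricted to $U$). Each $g_i$ is $C^\infty$. For the support, $dg_i = \phi_{\delta_i}'(w_{\epsilon_i})\,dw_{\epsilon_i}$ vanishes off the closed set $\{\delta_i \leq w_{\epsilon_i} \leq 1-\delta_i\}$; on this set $\|w_{\epsilon_i}-w\|_\infty < \delta_i$ forces $0 < w < 1$, i.e. $t < f < t+h$, so $\spt dg_i \subseteq \{t < f < t+h\}$. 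For uniform convergence, $\|g_i - w\|_\infty \leq \|\phi_{\delta_i}\circ w_{\epsilon_i} - w_{\epsilon_i}\|_\infty + \|w_{\epsilon_i}-w\|_\infty \leq \sup_{[0,1]}|\phi_{\delta_i}(y)-y| + \delta_i \to 0$. Finally, $Lip(g_i) \leq Lip(\phi_{\delta_i})\,Lip(w_{\epsilon_i}) \leq Lip(\phi_{\delta_i})\,Lip(w)$, so $\limsup_i Lip(g_i) \leq Lip(w)$; combined with the lower semicontinuity of the Lipschitz constant under pointwise convergence, which gives $\liminf_i Lip(g_i) \geq Lip(w)$, this yields $\lim_i Lip(g_i) = Lip(w) = Lip(\gamma_h \circ f)$.

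The main obstacle is precisely the tension between the \emph{strict} support requirement and the \emph{sharp} Lipschitz limit: the naive fix of shrinking the ramp $\gamma_h$ into the interior of $\{t<f<t+h\}$ enforces the support condition but only bounds the Lipschitz constant by $\frac{Lip(f)}{h-2\eta} \to \frac{Lip(f)}{h}$, which may strictly exceed $Lip(\gamma_h\circ f)$. The squashing construction avoids this because it acts on the already-mollified full ramp $w$, whose Lipschitz constant is exactly $Lip(w)$, and pays only the asymptotically trivial factor $Lip(\phi_\delta)\to 1$. The remaining technical point to verify carefully is the non-increase of the Carnot--Carathéodory Lipschitz constant under mollification, which rests on the left invariance of $d_{\mathbb{H}}$ and on handling the extension off $U$.
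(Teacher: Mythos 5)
Your proof is correct in substance but takes a genuinely different, and more constructive, route than the paper. The paper's own proof simply invokes density of smooth functions to produce the $g_i$, asserts that the support condition $\spt dg_i \subseteq \{t<f<t+h\}$ is inherited because $\gamma_h\circ f$ is locally constant off the strip, and derives $\lim_i Lip(g_i)=Lip(\gamma_h\circ f)$ from a two-sided triangle-inequality estimate. Neither of those last two steps actually follows from uniform convergence alone (a uniformly convergent sequence need not be locally constant where its limit is, and its Lipschitz constants need not converge), so the paper is implicitly assuming the approximating sequence has been \emph{chosen} with these properties. Your argument supplies exactly that choice: the left-invariant group mollification gives smooth approximants with $Lip(w_\epsilon)\le Lip(w)$, the squashing composition $\phi_{\delta_i}\circ w_{\epsilon_i}$ manufactures the flat ends needed for the support condition at the asymptotically negligible cost $Lip(\phi_{\delta_i})\to 1$, and the lower bound on $\liminf_i Lip(g_i)$ comes from lower semicontinuity of the Lipschitz constant under pointwise convergence rather than from the paper's symmetric triangle-inequality step. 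What your approach buys is a proof in which every claimed property of the $g_i$ is actually verified; what it costs is the two technical points you already flag, namely that the mollification must be the group convolution compatible with the left-invariant distance (so that $d(q^{-1}p,q^{-1}p')=d(p,p')$ gives the non-increase of the Lipschitz constant), and that uniform (rather than merely locally uniform) convergence of $w_\epsilon$ to $w$ on a possibly unbounded $U$ deserves a word, which is harmless here since $w$ is bounded and the diagonal choice of $\epsilon_i$ only needs $\|w_{\epsilon_i}-w\|_\infty<\delta_i$. These are fixable details, not gaps.
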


\begin{proof}
By density of smooth functions, we can approximate $\gamma_h \circ f$ uniformly by smooth function $g_i\in C^\infty (U,\mathbb{R})$ and, since $\gamma_h \circ f$ is smooth and locally constant out of $ \{ t < f < t+h \}$, it follows that $g_i$ is locally constant out of $ \{ t < f < t+h \}$ as well and so that $ \spt d g_i \subseteq  \{ t < f < t+h \}$. To prove the limit, we see that, for $p,q \in U$,
\begin{align*}
\left | g_i ( p) - g_i ( q)     \right | &\leq  \left | g_i(p) - \gamma_h \circ f (p)     \right |  +   \left | \gamma_h \circ f (p) - \gamma_h \circ f (q)  \right |   + \left |  \gamma_h \circ f (q)  -g_i(q)   \right |     \\
&  \leq   Lip(\gamma_h \circ f) d_{\mathbb{H}}(p,q) +2\epsilon_i \\
&=   Lip(\gamma_h \circ f) d_{\mathbb{H}}(p,q) +2\epsilon'_i d_{\mathbb{H}}(p,q)\\
&=   \left (  Lip(\gamma_h \circ f) +2\epsilon'_i \right ) d_{\mathbb{H}}(p,q) ,
\end{align*}
with $\epsilon_i = 2\epsilon'_i d_{\mathbb{H}}(p,q)$ and $\epsilon_i  \to 0$ as $i \to \infty$ by uniform convergence.  Thus, since the Lipschitz constant of $g_i $ is the smallest for which the inequality holds,
\begin{align*}
Lip(g_i)    \leq  \ Lip(\gamma_h \circ f) +2\epsilon_i' ,
\end{align*}
and, passing to the limit,
\begin{align*}
 \lim\limits_{i \to \infty}   Lip(g_i)   \leq  \ Lip(\gamma_h \circ f)<\infty  .
\end{align*}
On the other hand
\begin{align*}
\left | \gamma_h \circ f ( p) - \gamma_h \circ f ( q)     \right | &\leq  \left | \gamma_h \circ f ( p) -g_i(p)  \right |   + \left |  g_i(p) -g_i(q)   \right |  +  \left | g_i(q) - \gamma_h \circ f ( q)     \right |\\
&  \leq  \ Lip(g_i) d_{\mathbb{H}}(p,q) +2\epsilon_i ,
\end{align*}
with $\epsilon_i  \to 0$ as $i \to \infty$. Passing to the limit,
\begin{align*}
\left | \gamma_h \circ f ( p) - \gamma_h \circ f ( q)     \right | \leq  \lim\limits_{i \to \infty}  Lip(g_i)   d_{\mathbb{H}}(p,q),
\end{align*}
so, since again the Lipschitz constant is the smallest for which the inequality holds, we get,
\begin{align*}
Lip( \gamma_h \circ f  )  \leq  \lim\limits_{i \to \infty}  Lip(g_i) .
\end{align*}
Finally, indeed $\lim\limits_{i \to \infty}  Lip(g_i) = Lip(\gamma_h \circ f).$
\end{proof}

\begin{lem}\label{faseIII}
Consider an open set $U \subseteq \mathbb{H}^n$, $T\in N_{\mathbb{H},k+1}(U)$, $f \in Lip(U,\mathbb{R})$, $t \in \mathbb{R}$, $h>0$ fixed, consider the function $\gamma_h$ defined in Lemma \ref{lemmagamma} and the functions $g_i\in C^\infty (U,\mathbb{R})$  defined in Lemma \ref{lemmag_i--faseII} so that $ g_i \rightrightarrows   \gamma_h \circ f  $. Then
\begin{align*}
M  \left ( 
 \left ( \partial T \right ) \mres{\left ( \gamma_h \circ f \right )} -  \partial \left ( T \mres{\left ( \gamma_h \circ f \right )} \right ) 
\right )
\leq  \lim_{i \to \infty}
M  \left ( 
 \left ( \partial T \right ) \mres{g_i} -  \partial \left ( T \mres{g_i} \right ) 
\right )
\end{align*}
\end{lem}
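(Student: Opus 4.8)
The plan is to deduce the inequality from the lower semicontinuity of the mass functional under weak convergence of currents, so the real content is to prove that the $k$-currents
$$
S_i := (\partial T) \mres g_i - \partial\left( T \mres g_i \right) \qquad\text{converge to}\qquad S := (\partial T) \mres (\gamma_h \circ f) - \partial\left( T \mres (\gamma_h \circ f) \right)
$$
in the sense that $S_i(\omega) \to S(\omega)$ for every test form $\omega \in \mathcal{D}_{\mathbb{H}}^{k}(U)$, and then let lower semicontinuity do the rest.

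First I would record that, since $T \in N_{\mathbb{H},k+1}(U)$, both $T$ and $\partial T$ are representable by integration with finite total masses $\mu_T(U) = M(T) < \infty$ and $\mu_{\partial T}(U) = M(\partial T) < \infty$ by Corollary \ref{mass=T}. Because $0 \leq \gamma_h \circ f \leq 1$ is bounded and each $g_i$ is bounded, all four restrictions are well defined currents representable by integration (the condition $\int |\cdot|\, d\mu < \infty$ of the restriction definition holds trivially on the finite measures $\mu_T,\mu_{\partial T}$). Unwinding the definitions of restriction and boundary, for any $\omega \in \mathcal{D}_{\mathbb{H}}^{k}(U)$ one gets
$$
S_i(\omega) = (\partial T)(g_i\,\omega) - T(g_i\, d_c\omega) = \int_U g_i\, \langle \omega \vert \overrightarrow{\partial T}\rangle\, d\mu_{\partial T} - \int_U g_i\, \langle d_c\omega \vert \overrightarrow{T}\rangle\, d\mu_T,
$$
and the identical formula for $S(\omega)$ with $\gamma_h \circ f$ in place of $g_i$; here $d_c$ is the relevant Rumin differential, used exactly as in the proof of Proposition \ref{first4properties}. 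Since $g_i \rightrightarrows \gamma_h \circ f$ uniformly by Lemma \ref{lemmag_i--faseII}, the integrands $\langle \omega \vert \overrightarrow{\partial T}\rangle$ and $\langle d_c\omega \vert \overrightarrow{T}\rangle$ are bounded (the forms are smooth with compact support, and $\|\overrightarrow{T}\| = \|\overrightarrow{\partial T}\| = 1$ a.e.), and the measures are finite, dominated convergence lets me pass to the limit in both integrals, giving $S_i(\omega) \to S(\omega)$.

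Finally I would invoke lower semicontinuity of the comass-mass: for any admissible $\omega$ with $\norm{\omega(p)}^* \leq 1$ for all $p$, the bound $S_i(\omega) \leq M(S_i)$ together with $S(\omega) = \lim_i S_i(\omega)$ gives $S(\omega) \leq \liminf_i M(S_i)$, and taking the supremum over such $\omega$ in Definition \ref{masscomass} yields $M(S) \leq \liminf_i M(S_i)$, which is the asserted inequality (with $\liminf$ replaced by $\lim$ whenever the latter exists, as in the intended application).

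I do not expect a genuine obstacle here: the argument is the standard weak-convergence-plus-lower-semicontinuity scheme, and the computation is routine. The only step requiring care is the bookkeeping in the first paragraph, namely checking that the boundary identities $\partial(T \mres \phi)(\omega) = T(\phi\, d_c\omega)$ and $(\partial T)\mres \phi(\omega) = (\partial T)(\phi\,\omega)$ are legitimate for a merely Lipschitz scalar $\phi = \gamma_h \circ f$ and for smooth $\phi = g_i$, and that the resulting currents are genuinely representable by integration; boundedness of $\phi$ and finiteness of $\mu_T,\mu_{\partial T}$ make all of this immediate.
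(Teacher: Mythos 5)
Your proof is correct, but it follows a genuinely different scheme from the paper's. The paper argues in the mass norm: it first asserts that
$M\bigl( (\partial T)\mres(\gamma_h\circ f - g_i) - \partial( T\mres(\gamma_h\circ f - g_i))\bigr)\to 0$
as $i\to\infty$ (using the linearity of the restriction in the scalar and the uniform convergence $g_i\rightrightarrows\gamma_h\circ f$), and then concludes by the triangle inequality $M(S)\leq M(S-S_i)+M(S_i)$. You instead establish only weak convergence $S_i(\omega)\to S(\omega)$ for each fixed test form (via dominated convergence on the finite measures $\mu_T,\mu_{\partial T}$) and then invoke lower semicontinuity of the comass-mass, $M(S)\leq\liminf_i M(S_i)$. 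Your route is logically weaker in what it needs and therefore more robust: for a fixed $\omega$ the integrand $\langle d_c\omega\,\vert\,\overrightarrow{T}\rangle$ is bounded and dominated convergence is immediate, whereas the paper's claim that the \emph{mass} of the boundary term $\partial(T\mres(\gamma_h\circ f - g_i))$ tends to zero requires a uniform control over all $\omega$ with $\norm{\omega}^*\leq 1$, for which $d_c\omega$ is not uniformly bounded; that step is asserted in one line in the paper and is the more delicate part of its argument. The price you pay is that you only obtain the $\liminf$ inequality rather than actual convergence of the masses, but, as you note, that is all the statement and its subsequent use in Lemma \ref{L5} require. The bookkeeping in your first paragraph (well-definedness of the restrictions and the identities $\partial(T\mres\phi)(\omega)=T(\phi\, d_c\omega)$ for smooth $\phi=g_i$ and for the merely Lipschitz $\phi=\gamma_h\circ f$ via the representation $T\mres\phi=\overrightarrow{T}\wedge\phi\mu_T$) matches the paper's definitions and is sound.
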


\begin{proof}
Let's first notice that
\begin{align*}
& \lim_{i \to \infty} M  \left ( 
 \left ( \partial T \right ) \mres{\left ( \gamma_h \circ f \right )} -  \partial \left ( T \mres{\left ( \gamma_h \circ f \right )} \right ) 
-
\left [ 
 \left ( \partial T \right ) \mres{g_i} -  \partial \left ( T \mres{g_i} \right ) 
\right ]
\right )  \\
=&
\lim_{i \to \infty} M  \left ( 
 \left ( \partial T \right ) \mres{\left ( \gamma_h \circ f - g_i \right )} -  \partial \left ( T \mres{\left ( \gamma_h \circ f  - g_i \right )} \right ) 
\right ) \\
 = & 0
\end{align*}
since $ g_i \rightrightarrows   \gamma_h \circ f  $. Then
\begin{align*}
&M  \left ( 
 \left ( \partial T \right ) \mres{\left ( \gamma_h \circ f \right )} -  \partial \left ( T \mres{\left ( \gamma_h \circ f \right )} \right ) 
\right ) \\
\leq &
M  \left ( 
 \big ( \partial T \right ) \mres{\left ( \gamma_h \circ f \right )} -  \partial \left ( T \mres{\left ( \gamma_h \circ f \right )} \right ) 
- \left [  \left ( \partial T \right ) \mres{g_i} -  \partial \left ( T \mres{g_i} \right ) \right ]
\big ) \\
& +
M  \left ( 
  \left ( \partial T \right ) \mres{g_i} -  \partial \left ( T \mres{g_i} \right ) 
\right ) .
\end{align*} 
Passing to the limit for $i \to \infty$, we obtain the claim.
\end{proof}


\noindent
So far we could work without explicitely using the Rumin complex operators. Now this is no more possible, as the following lemma shows.

\begin{lem}\label{L2}
Consider an open set $U \subseteq \mathbb{H}^n$, $T\in \mathcal{D}_{\mathbb{H},k+1}(U)$, $\omega \in \mathcal{D}^k_{\mathbb{H}}(U)$  and the functions $g_i \in C^\infty (U,\mathbb{R})$  defined in Lemma \ref{lemmag_i--faseII}. Also recall notations \ref{notationL} and \ref{notation:IJ}. Then
$$
\left [ \left ( \partial T \right ) \mres{g_i} -  \partial \left ( T \mres{g_i} \right ) \right ]  (\omega)  =
$$
$$
=
\begin{cases}
T   \left ( \left [ d^{(1)} g_i \wedge  \omega \right ]_{I^{k+1}}  \right ) , 
\quad [\omega ]_{I^{k}} \in \mathcal{D}^k_{\mathbb{H}}(U) = \frac{\Omega^k}{I^k},
\quad \text{if }k<n,\\
T  \Big ( 
 d^{(1)} g_i  \wedge \left ( \omega + \mathcal{L}(\omega) \wedge \theta \right )  + 
 d^{(n+1)} \left (
 \left (
 \mathcal{L}( g_i \omega) -   g_i \mathcal{L}( \omega) 
 \right )
\wedge \theta
 \right ) 
\Big ) , \\
\quad \quad  \quad  \quad \quad  \quad  \quad  \quad \quad  \quad  \quad \quad  \quad  \quad \quad  \quad  \quad [\omega ]_{I^{n}} \in \mathcal{D}^n_{\mathbb{H}}(U) = \frac{\Omega^n}{I^n},
\quad \text{if }k=n,\\
T   \left ( \left ( d^{(1)} g_i \wedge  \omega \right )_{\vert_{J^{k+1} }}  \right ) , 
\quad \omega \in \mathcal{D}^k_{\mathbb{H}}(U) = J^k,
\quad \text{if }k>n.
\end{cases}
$$
\end{lem}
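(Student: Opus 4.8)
The plan is to reduce the entire statement to a single evaluation of $T$ on the form $d_c(g_i\omega)-g_i\,d_c\omega$, and then to expand this "defect of the Leibniz rule" explicitly in each of the three dimensional regimes, using the appropriate Rumin operator. First I would unwind the two definitions involved. Since contracting a current with a $0$-form satisfies $(S\mres g)(\eta)=S(g\eta)$, and the boundary acts by $(\partial S)(\eta)=S(d_c\eta)$ — with $d_c$ the Rumin differential from degree $k$ to $k+1$, that is $d_Q$ when $k\neq n$ and $D$ when $k=n$, by Definition \ref{complexHn} and the definition of $\partial$ — I obtain
\begin{align*}
\left[(\partial T)\mres g_i-\partial(T\mres g_i)\right](\omega)
&= (\partial T)(g_i\omega)-(T\mres g_i)(d_c\omega)\\
&= T\!\left(d_c(g_i\omega)\right)-T\!\left(g_i\,d_c\omega\right)
= T\!\left(d_c(g_i\omega)-g_i\,d_c\omega\right).
\end{align*}
Everything then reduces to computing $d_c(g_i\omega)-g_i\,d_c\omega$. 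Throughout I would use that multiplication by the scalar $g_i$ preserves both ideals $I^{\bullet}$ and $J^{\bullet}$ (immediate from their generators in Definition \ref{def_forms}), so that $[g_i\omega]_{I^k}$ and $g_i\omega\in J^k$ are well defined and the operators indeed apply.

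For $k<n$ the operator is $d_Q([\alpha]_{I^k})=[d\alpha]_{I^{k+1}}$, so the ordinary Leibniz rule $d(g_i\omega)=d^{(1)}g_i\wedge\omega+g_i\,d\omega$ makes the $g_i\,d\omega$ contributions cancel, leaving $[d^{(1)}g_i\wedge\omega]_{I^{k+1}}$, the claimed value. For $k>n$ the operator is $d_Q=d_{\vert_{J^k}}$; since $g_i\omega\in J^k$, the complex property $d(J^k)\subseteq J^{k+1}$ gives $d(g_i\omega)\in J^{k+1}$, and likewise $g_i\,d\omega\in J^{k+1}$, so that $d^{(1)}g_i\wedge\omega=d(g_i\omega)-g_i\,d\omega\in J^{k+1}$ already. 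Hence the difference equals $(d^{(1)}g_i\wedge\omega)_{\vert_{J^{k+1}}}$, as stated.

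The main obstacle is the middle case $k=n$, where $d_c=D$ is second order: $D([\alpha]_{I^n})=d(\alpha+\mathcal{L}(\alpha)\wedge\theta)$ with $\mathcal{L}(\alpha)=L^{-1}\!\big(-(d\alpha)_{\vert_{{\prescript{}{}\bigwedge}^{n+1}\mathfrak{h}_1}}\big)$ as in Notation \ref{notationL}. Expanding $D([g_i\omega]_{I^n})$ and $g_i\,D([\omega]_{I^n})$ and applying Leibniz to $d(g_i\omega)$, the $g_i\,d\omega$ terms cancel and I am left with
\begin{align*}
D([g_i\omega]_{I^n})-g_i\,D([\omega]_{I^n})
= d^{(1)}g_i\wedge\omega + d\!\left(\mathcal{L}(g_i\omega)\wedge\theta\right) - g_i\,d\!\left(\mathcal{L}(\omega)\wedge\theta\right).
\end{align*}
The crucial point is that $\mathcal{L}$ is \emph{not} $C^\infty(U)$-linear: because it is built from $d\alpha$, the product rule forces a genuine cross term. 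I would make it explicit by writing $g_i\,d(\mathcal{L}(\omega)\wedge\theta)=d(g_i\mathcal{L}(\omega)\wedge\theta)-d^{(1)}g_i\wedge\mathcal{L}(\omega)\wedge\theta$; substituting this and recombining, the term $d^{(1)}g_i\wedge\mathcal{L}(\omega)\wedge\theta$ merges with $d^{(1)}g_i\wedge\omega$ into the block $d^{(1)}g_i\wedge(\omega+\mathcal{L}(\omega)\wedge\theta)$, while the two exact terms collapse to $d^{(n+1)}\!\big((\mathcal{L}(g_i\omega)-g_i\mathcal{L}(\omega))\wedge\theta\big)$. This is exactly the claimed expression.

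Finally I would observe that the resulting $(n+1)$-form is by construction $D([g_i\omega]_{I^n})-g_i\,D([\omega]_{I^n})$, and therefore automatically lies in $J^{n+1}$ (the image of $D$, together with $g_i\cdot J^{n+1}\subseteq J^{n+1}$), so that $T$ may be applied to it without any further projection, and likewise the representatives in the quotient cases are well defined. The only care needed beyond bookkeeping is the non-commutation of $\mathcal{L}$ with multiplication by $g_i$, which is precisely what produces the extra second-order correction distinguishing the $k=n$ case from the other two.
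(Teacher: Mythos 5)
Your proposal is correct and follows essentially the same route as the paper: reduce to $T\bigl(d_c(g_i\omega)-g_i\,d_c\omega\bigr)$ and compute the Leibniz defect case by case, with the $k=n$ case handled by isolating the cross term coming from the non-$C^\infty(U)$-linearity of $\mathcal{L}$. Your only deviations are cosmetic — you apply the identity $g_i\,d\alpha=d(g_i\alpha)-dg_i\wedge\alpha$ directly to $\mathcal{L}(\omega)\wedge\theta$ instead of first expanding $d(\cdot\wedge\theta)$ into $\theta$- and $d\theta$-components as the paper does, and you note up front that the relevant forms already lie in $J^{k+1}$ (which the paper defers to a separate lemma for $k=n$).
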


\begin{proof}
Let's first note that, for $\omega \in \mathcal{D}^k_{\mathbb{H}}(U)$,
\begin{align*}
\left [ \left ( \partial T \right ) \mres{g_i} -  \partial \left ( T \mres{g_i} \right ) \right ]  (\omega)  
&=   \partial T \left ( g_i \omega \right )   -   \left ( T \mres{g_i} \right ) \left  (d_c \omega \right ) \\
&=  T \left (d_c \left ( g_i \omega \right )    -   g_i d_c \omega \right ).
\end{align*}
There are then three cases for $d_c = d_c^{(k)}$ (see Definition \ref{complexHn}), depending on $k$. First, if $k<n$, then
\begin{align*}
d_c^{(k+1)} \omega = d_Q^{(k+1)} \left [\omega \right ]_{I^k}= \left [  d^{(k+1)}\omega \right ]_{I^k}
\end{align*}
and so
\begin{align*}
d_c^{(k+1)} \left ( g_i \omega \right )    -   g_i d_c^{(k+1)} \omega 
&= \left [  d^{(k+1)} \left ( g_i \omega \right ) \right ]_{I^k} - g_i    \left [  d^{(k+1)}\omega \right ]_{I^k}\\
&= \left [  d^{(k+1)} \left ( g_i \omega \right )  - g_i    d^{(k+1)}\omega \right ]_{I^k}\\
&= \left [  d^{(1)}  g_i \wedge \omega + g_i  d^{(k+1)}  \omega   - g_i    d^{(k+1)}\omega \right ]_{I^k}\\
&= \left [  d^{(1)}  g_i \wedge \omega  \right ]_{I^k}.
\end{align*}
This proves the first case. Second, if $k>n$, similarly we have
\begin{align*}
d_c^{(k+1)} \omega = d_Q^{(k+1)} \omega= \left (  d^{(k+1)} \omega \right  ) _{\vert_{J^{k+1}}}
\end{align*}
and so
\begin{align*}
d_c^{(k+1)} \left ( g_i \omega \right )    -   g_i d_c^{(k+1)} \omega 
&= \left (  d^{(k+1)} \left ( g_i \omega \right ) \right  ) _{\vert_{J^{k+1}}} - g_i   \left (  d^{(k+1)} \omega \right  ) _{\vert_{J^{k+1}}}\\
&=\left (  d^{(k+1)} \left ( g_i \omega \right )   - g_i  d^{(k+1)} \omega \right  ) _{\vert_{J^{k+1}}}\\
&=\left (  d^{(1)}    g_i \wedge \omega  + g_i  d^{(k+1)} \omega   - g_i  d^{(k+1)} \omega \right  ) _{\vert_{J^{k+1}}}\\
&=\left (  d^{(1)}    g_i \wedge \omega \right  ) _{\vert_{J^{k+1}}}.
\end{align*}
This proves the third case. Last, we consider the case $k=n$ and we have 
\begin{align*}
d_c^{(n+1)} \omega = D \left [\omega \right ]_{I^n} &= d^{(n+1)} \left ( \omega +  L^{-1} \left (- (d \omega)_{\vert_{ {\prescript{}{}\bigwedge}^{n+1} \mathfrak{h}_1 }} \right ) \wedge \theta \right )\\
&= d^{(n+1)} \left ( \omega +  \mathcal{L} ( \omega) \wedge \theta \right ).
\end{align*}
Let's also note that
\begin{align*}
 d^{(n+1)} \left ( g_i \omega \right )  - g_i  d^{(n+1)}  \omega 
 =  d^{(1)}  g_i \wedge \omega +  g_i   d^{(n+1)} \omega     - g_i  d^{(n+1)}  \omega
=  d^{(1)}  g_i \wedge \omega,
\end{align*}
that
\begin{align*}
d^{(n+1)} \left (  \mathcal{L} ( g_i \omega) \wedge \theta \right )
= d^{(n)} \left (  \mathcal{L} ( g_i \omega)  \right ) \wedge \theta  + (-1)^{n-1}  \mathcal{L} ( g_i \omega) \wedge d^{(2)}  \theta,
\end{align*}
and that
\begin{align*}
- g_i  d^{(n+1)} \left (   \mathcal{L} ( \omega) \wedge \theta \right )
= - g_i d^{(n)} \left (  \mathcal{L} (  \omega)  \right ) \wedge \theta  - (-1)^{n-1} g_i  \mathcal{L} (  \omega) \wedge d^{(2)} \theta .
\end{align*}
Then we use all of the above and we get
\begin{align*}
d_c^{(n+1)}& \left ( g_i \omega \right )    -   g_i d_c^{(n+1)} \omega \\
=& D \left [g_i \omega \right ]_{I^n} - g_i  D \left [\omega \right ]_{I^n}\\
=& d^{(n+1)} \left ( g_i \omega +  \mathcal{L} ( g_i \omega) \wedge \theta \right ) - g_i  d^{(n+1)} \left ( \omega +  \mathcal{L} ( \omega) \wedge \theta \right )\\
=& d^{(n+1)} \left ( g_i \omega \right )  - g_i  d^{(n+1)}  \omega + d^{(n+1)} \left (  \mathcal{L} ( g_i \omega) \wedge \theta \right ) - g_i  d^{(n+1)} \left (   \mathcal{L} ( \omega) \wedge \theta \right )\\
=& d^{(1)}  g_i \wedge \omega
+ \left [   d^{(n)} \left (  \mathcal{L} ( g_i \omega)  \right )  - g_i d^{(n)} \left (  \mathcal{L} (  \omega)  \right )   \right ] \wedge \theta\\
& +  (-1)^{n-1} \left [  \mathcal{L} ( g_i \omega) -  g_i  \mathcal{L} (  \omega)   \right ] \wedge  d^{(2)}  \theta\\
=& d^{(1)}  g_i \wedge \omega
+ \left [   d^{(n)} \left (  \mathcal{L} ( g_i \omega)  \right )
  - \left (     d^{(n)} \left (   g_i  \mathcal{L} (  \omega)  \right )  -         d^{(1)}  g_i \wedge  \mathcal{L} (  \omega)        \right )
\right ] \wedge \theta\\
& +  (-1)^{n-1} \left [  \mathcal{L} ( g_i \omega) -  g_i  \mathcal{L} (  \omega)   \right ] \wedge  d^{(2)}  \theta\\
=& d^{(1)}  g_i \wedge \omega  +    d^{(1)}  g_i \wedge  \mathcal{L} (  \omega)    \wedge \theta
+   d^{(n)} \left (  \mathcal{L} ( g_i \omega)   -   g_i  \mathcal{L} (  \omega)  \right )   \wedge \theta\\
& +  (-1)^{n-1} \left (  \mathcal{L} ( g_i \omega) -  g_i  \mathcal{L} (  \omega)   \right ) \wedge  d^{(2)}  \theta\\
=& d^{(1)}  g_i \wedge \left ( \omega  +  \mathcal{L} (  \omega)   \wedge \theta \right ) 
+   d^{(n+1)}  \left ( \left (  \mathcal{L} ( g_i \omega)   -   g_i  \mathcal{L} (  \omega)  \right )   \wedge \theta \right ).
\end{align*}
This completes the proof of the lemma.
\end{proof}


\begin{lem}\label{L4}
Consider an open set $U \subseteq \mathbb{H}^n$, $T\in R_{\mathbb{H},k+1}(U)$, $\omega \in \mathcal{D}^k_{\mathbb{H}}(U)$, $k\neq n$ and the functions $g_i\in C^\infty (U,\mathbb{R})$  defined in Lemma \ref{lemmag_i--faseII}. Then
$$
\left [ \left ( \partial T \right ) \mres{g_i} -  \partial \left ( T \mres{g_i} \right ) \right ] (\omega) \leq \ Lip(g_i) \left ( T \mres \spt d g_i \right )   \left ( \sum_{j=1}^{2n} dw_j  \wedge \omega    \right ).
$$
\end{lem}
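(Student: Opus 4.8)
The plan is to feed the explicit formula of Lemma \ref{L2} into the integral representation of $T$ and then annihilate the vertical direction, which is exactly where the hypothesis $k\neq n$ enters. First I would apply Lemma \ref{L2}: since $k\neq n$, evaluating $\left( \partial T \right) \mres g_i - \partial\left( T \mres g_i \right)$ at $\omega$ gives $T$ applied to $d^{(1)}g_i\wedge\omega$, read in the quotient $\frac{\Omega^{k+1}}{I^{k+1}}$ when $k<n$ and restricted to $J^{k+1}$ when $k>n$. As $T\in R_{\mathbb{H},k+1}(U)$ has finite mass, Corollary \ref{mass=T} provides the representation $T=\overrightarrow{T}\wedge\mu_T$ with $\overrightarrow{T}(p)\in{\prescript{}{H}\bigwedge}_{k+1}$; the pairing is well defined on Rumin forms precisely because ${\prescript{}{H}\bigwedge}_{k+1}$ is their dual (Definition \ref{HLambda}), so I can write
$$
\left[ \left( \partial T \right) \mres g_i - \partial\left( T \mres g_i \right) \right](\omega) = \int_U \langle d^{(1)}g_i\wedge\omega \vert \overrightarrow{T} \rangle \, d\mu_T .
$$

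Next I would expand $d^{(1)}g_i = \sum_{j=1}^{2n+1} W_j g_i \, dw_j$ via Example \ref{df} and Notation \ref{notW}, and show that the vertical summand $j=2n+1$ contributes nothing. This is the step that forces $k\neq n$, and it splits into two cases. If $k>n$, then $\omega\in J^k$ satisfies $\omega\wedge\theta=0$, hence $\omega=\theta\wedge\omega'$ for some $\omega'$; since $dw_{2n+1}=\theta$, this gives $dw_{2n+1}\wedge\omega=\theta\wedge\theta\wedge\omega'=0$. If $k<n$, then $\overrightarrow{T}(p)\in{\prescript{}{H}\bigwedge}_{k+1}\subseteq{\prescript{}{}\bigwedge}_{k+1}\mathfrak{h}_1$ carries no factor $W_{2n+1}=T$, so $\langle dw_{2n+1}\wedge\omega \vert \overrightarrow{T}\rangle=0$ by orthonormality of the dual bases. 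In either case only the horizontal indices survive, and
$$
\langle d^{(1)}g_i\wedge\omega \vert \overrightarrow{T} \rangle = \langle \sum_{j=1}^{2n} W_j g_i \, dw_j\wedge\omega \vert \overrightarrow{T} \rangle .
$$

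Finally I would use that for Lipschitz functions on $\mathbb{H}^n$ one has $Lip(g_i)=|\nabla_\mathbb{H} g_i|$, so that $W_j g_i \leq Lip(g_i)$ for each $j=1,\dots,2n$ (compare Definition \ref{veryfirstnabla}), giving the coefficientwise domination $\sum_{j=1}^{2n} W_j g_i\, dw_j \leq Lip(g_i)\sum_{j=1}^{2n} dw_j$. Moreover $d^{(1)}g_i$, and therefore the whole integrand, is supported on $\spt dg_i$ by Lemma \ref{lemmag_i--faseII}. Substituting the bound and pulling $Lip(g_i)$ out of the integral yields
$$
\int_U \langle d^{(1)}g_i\wedge\omega \vert \overrightarrow{T} \rangle \, d\mu_T \leq Lip(g_i) \int_{\spt dg_i} \langle \sum_{j=1}^{2n} dw_j\wedge\omega \vert \overrightarrow{T} \rangle \, d\mu_T = Lip(g_i)\left( T\mres\spt dg_i \right)\left( \sum_{j=1}^{2n} dw_j\wedge\omega \right),
$$
which is the assertion.

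The main obstacle is the vanishing of the vertical term, because the two justifications above are exactly the structural facts that break down at $k=n$: there Lemma \ref{L2} produces the extra second-order contribution $d^{(n+1)}\left( \left( \mathcal{L}(g_i\omega)-g_i\mathcal{L}(\omega) \right)\wedge\theta \right)$, and neither the divisibility $\omega=\theta\wedge\omega'$ nor the horizontality of $\overrightarrow{T}$ is available to absorb it. A secondary delicate point is transporting the coefficientwise domination $\sum_{j} W_j g_i\, dw_j \leq Lip(g_i)\sum_{j} dw_j$ through the pairing with the fixed $\overrightarrow{T}$ and the integration against $\mu_T$; carefully tracking signs here is what makes the identification $Lip(g_i)=|\nabla_\mathbb{H} g_i|$ together with the restriction to $\spt dg_i$ do the work.
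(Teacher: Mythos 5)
Your proposal is correct and follows essentially the same route as the paper's proof: apply Lemma \ref{L2} to reduce to $T$ evaluated on $d^{(1)}g_i\wedge\omega$, pass to the integral representation $T=\overrightarrow{T}\wedge\mu_T$, discard the vertical summand $dw_{2n+1}$ using divisibility by $\theta$ when $k>n$ and horizontality of $\overrightarrow{T}$ when $k<n$, bound $W_jg_i$ by $Lip(g_i)$ via the horizontal gradient, and restrict the integral to $\spt dg_i$. The only cosmetic difference is that the paper uses the inequality $|\nabla_{\mathbb{H}}g_i|\leq Lip(g_i)$ rather than the equality you assert, which is all that is needed.
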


\begin{proof}
For $k<n$, by Lemma \ref{L2}, 
\begin{align*}
\left [ \left ( \partial T \right ) \mres{g_i} -  \partial \left ( T \mres{g_i} \right ) \right ] (\omega)
&= T   \left ( \left [ d g_i \wedge  \omega \right ]_{I^{k+1}}  \right ) \\
&= \int_{U \cap \spt dg_i}   \langle \left [ d g_i \wedge  \omega \right ]_{I^{k+1}}  \vert \overrightarrow{T} \rangle d \mu_T \\
&
= \int_{U \cap \spt dg_i}  \langle  d g_i \wedge  \omega  \vert \overrightarrow{T} \rangle d \mu_T .
\end{align*}
For $k>n$, by Lemma \ref{L2} again, we have a similar expression:
\begin{align*}
\left [ \left ( \partial T \right ) \mres{g_i} -  \partial \left ( T \mres{g_i} \right ) \right ] (\omega)
&=T   \left ( \left ( d g_i \wedge  \omega \right )_{\vert_{J^{k+1} }}  \right ) \\
&= \int_{U \cap \spt dg_i}   \langle \left ( d g_i \wedge  \omega \right )_{\vert_{J^{k+1} }}  \vert \overrightarrow{T} \rangle d \mu_T \\
&
= \int_{U \cap \spt dg_i}  \langle  d g_i \wedge  \omega  \vert \overrightarrow{T} \rangle d \mu_T .
\end{align*}
Recall Notation \ref{notW} and note that, as in Example \ref{df},  $d g_i = \sum_{j=1}^{2n+1} W_{j} g_i  dw_j$. If $k>n$, then $\omega \in \mathcal{D}^k_{\mathbb{H}}(U)$ is of the form $\omega =dw_{2n+1} \wedge \omega' $, $\omega' \in \Omega^{k-1}$ (see $J^k$ at Definition \ref{def_forms}). Then
$$
 d g_i \wedge  \omega  =  \sum_{j=1}^{2n+1} W_{j} g_i  dw_j  \wedge  \omega = \sum_{j=1}^{2n} W_{j} g_i  dw_j  \wedge  \omega.
$$
If $k<n$, then $T \in R_{\mathbb{H},k+1}(U)$ means that $\overrightarrow{T_p} \in  {\prescript{}{H}{\bigwedge}}_{k+1} (U)$ for $p \in U$ (see Definition \ref{repbyint}), which implies that $\overrightarrow{T} \neq W_{2n+1} \wedge V $, $V \in \Omega_{k}$. Then
$$
 \langle  d g_i \wedge  \omega  \vert \overrightarrow{T} \rangle
=
 \langle  \sum_{j=1}^{2n+1} W_{j} g_i  dw_j  \wedge  \omega  \vert \overrightarrow{T} \rangle
=
 \langle  \sum_{j=1}^{2n} W_{j} g_i  dw_j  \wedge  \omega  \vert \overrightarrow{T} \rangle.
$$
Thus, in both cases we have that
$$
 \langle  d g_i \wedge  \omega  \vert \overrightarrow{T} \rangle
=
 \langle  \sum_{j=1}^{2n} W_{j} g_i  dw_j  \wedge  \omega  \vert \overrightarrow{T} \rangle.
$$
We note that $| \nabla_{\mathbb{H}} g_i | \leq Lip( g_i )  $ and so $ W_j g_i  \leq \ Lip (g_i)$ for all $j=1,\dots,2n$.
%
  Indeed, using definitions \ref{dHHH} and \ref{veryfirstnabla},
\begin{align*}
W_j g_i
& \leq |W_j g_i |      = \sup_{\norm{p_0}\leq 1}  |W_j g_i (p_0) |       
 \leq  \sup_{\norm{p_0}\leq 1}  |  \nabla_\mathbb{H} g_i (p_0) |     \\ 
&   =  \sup_{\norm{p_0}\leq 1}  \left  | \left   ( d_H {g_i}_{p_0} \right )^* \right |          
   =  \sup_{\norm{p_0}\leq 1}    |   d_H {g_i}_{p_0}   |             \\
& =  \sup_{\norm{p_0}\leq 1, \ \norm{p}\leq 1}    |   d_H {g_i}_{p_0} (p)   |            \\
&= \sup_{\norm{p_0}\leq 1, \ \norm{p}\leq 1}   \lim\limits_{r \to 0+} \frac{\left |  g_i \left (p_0* \delta_r (p)  \right ) - g_i(p_0) \right | }{r}       \\
&\leq   \sup_{\norm{p_0}\leq 1, \ \norm{p}\leq 1}     Lip (g_i)   
=  Lip (g_i). \\
\end{align*}

\noindent
Then
$$
 \sum_{j=1}^{2n} W_{j} g_i  dw_j
\leq
\ Lip (g_i) \sum_{j=1}^{2n} dw_j .
$$
 Finally, for $k\neq n$,
\begin{align*}
\int_{U \cap \spt dg_i}  \langle  d g_i \wedge  \omega  \vert \overrightarrow{T} \rangle d \mu_T
&\leq \ Lip (g_i)  \int_{U \cap \spt dg_i}  \langle \sum_{j=1}^{2n} dw_j  \wedge  \omega  \vert \overrightarrow{T} \rangle d \mu_T\\
&= \ Lip(g_i) \left ( T \mres \spt d g_i \right )   \left ( \sum_{j=1}^{2n} dw_j \wedge \omega    \right ).
\end{align*}
\end{proof}

\begin{lem}\label{L5}
Consider an open set $U \subseteq \mathbb{H}^n$, $T\in N_{\mathbb{H},k+1}(U)$, $f \in Lip(U,\mathbb{R})$, $t \in \mathbb{R}$, $h>0$ and $k\neq n$.
Then
\begin{align*}
M (\langle T,f,t+ \rangle)  \leq &   \liminf\limits_{h \rightarrow 0+} \frac{Lip(f)}{h}   M  \left (
 T \mres   \left ( \chi_{ \{ t < f <t+h \} }  \sum_{j=1}^{2n} dw_j  \right )
\right ).
\end{align*}
\end{lem}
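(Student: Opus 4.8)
The plan is to stack the preceding lemmas, carrying the estimate obtained for the smooth approximants $g_i$ all the way back to the slice. First I would apply Lemma \ref{faseI} and Lemma \ref{faseIII} in succession, obtaining
$$
M(\langle T,f,t+ \rangle) \leq \liminf_{h \to 0+} \lim_{i \to \infty} M\big( (\partial T)\mres g_i - \partial(T\mres g_i) \big),
$$
which reduces the problem to estimating $M\big( (\partial T)\mres g_i - \partial(T\mres g_i) \big)$ for fixed $h$ and $i$.

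For the latter I would write the mass as its defining supremum and insert the pointwise bound of Lemma \ref{L4} (available precisely because $k\neq n$):
$$
M\big( (\partial T)\mres g_i - \partial(T\mres g_i) \big) = \sup_{\norm{\omega}^* \leq 1} \big[ (\partial T)\mres g_i - \partial(T\mres g_i) \big](\omega) \leq Lip(g_i)\, \sup_{\norm{\omega}^* \leq 1} \big( T\mres \spt d g_i \big)\Big( \sum_{j=1}^{2n} dw_j \wedge \omega \Big).
$$
Since $\big( T\mres \spt d g_i \big)\big( \sum_j dw_j \wedge \omega \big) = \big( (T\mres \spt d g_i)\mres \sum_j dw_j \big)(\omega)$, the remaining supremum is exactly the mass $M\big( (T\mres \spt d g_i)\mres \sum_{j=1}^{2n} dw_j \big)$.

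The crucial step is then to enlarge the restriction set from $\spt d g_i$ to the slab $\{ t < f < t+h \}$, which is legitimate because Lemma \ref{lemmag_i--faseII} gives $\spt d g_i \subseteq \{ t < f < t+h \}$. Writing $\varphi = \sum_{j=1}^{2n} dw_j$ and setting $R := T\mres \varphi$ (representable by integration, since $T$ has finite mass and compact support), I would use $(T\mres \chi_A)\mres \varphi = R\mres \chi_A$ together with the mass identity $M(R\mres \chi_A) = \mu_R(A)$ from Corollary \ref{mass=T}; monotonicity of $\mu_R$ then yields
$$
M\Big( (T\mres \spt d g_i)\mres \sum_{j=1}^{2n} dw_j \Big) \leq M\Big( T\mres\big( \chi_{\{ t<f<t+h \}} \sum_{j=1}^{2n} dw_j \big) \Big),
$$
with a right-hand side independent of $i$.

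Finally I would combine the displays and let $i \to \infty$, invoking $\lim_{i\to\infty} Lip(g_i) = Lip(\gamma_h \circ f)$ from Lemma \ref{lemmag_i--faseII} and then $Lip(\gamma_h \circ f) \leq Lip(f)/h$ from Lemma \ref{lemmagamma}; taking $\liminf_{h\to 0+}$ produces the stated inequality. I expect the main obstacle to be the bookkeeping in the monotonicity step: one must verify that restriction by the form $\varphi$ commutes correctly with restriction by a characteristic function, and that the mass of $(T\mres\chi_A)\mres\varphi$ is monotone in $A$ through the representing measure $\mu_R$ rather than through a pointwise integrand that may change sign. The passage of the supremum over $\omega$ through the $i$-limit is harmless here, since the $i$-independent bound lets one simply multiply by $Lip(g_i)$ and take limits.
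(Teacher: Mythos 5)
Your proposal is correct and follows the paper's proof essentially step for step: Lemma \ref{faseI} plus Lemma \ref{faseIII}, then Lemma \ref{L4}, then enlargement of $\spt d g_i$ to the slab $\{ t < f < t+h \}$, then the Lipschitz limits from Lemmas \ref{lemmag_i--faseII} and \ref{lemmagamma}. The only local difference is the enlargement step: the paper performs it by noting that each competitor $\omega' = \chi_{\spt d g_i}\,\omega$ for the mass over $\spt d g_i$ is also a competitor for the mass over the larger slab (so the supremum can only increase), whereas you route it through the representing measure of $T \mres \sum_{j=1}^{2n} dw_j$; both are valid, and the sign issue you flag is precisely what the paper's enlargement-of-competitors argument also sidesteps.
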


\begin{proof}
By Lemmas \ref{faseI} and \ref{faseIII} we have that
\begin{align*}
M (\langle T,f,t+ \rangle) \leq & \liminf\limits_{h \rightarrow 0+}
M  \left ( 
 \left ( \partial T \right ) \mres{\left ( \gamma_h \circ f \right )} -  \partial \left ( T \mres{\left ( \gamma_h \circ f \right )} \right ) 
\right )\\
\leq &  \liminf\limits_{h \rightarrow 0+}  \lim_{i \to \infty}
M  \left ( 
 \left ( \partial T \right ) \mres{g_i} -  \partial \left ( T \mres{g_i} \right ) 
\right ).
\end{align*}
 Then, by Lemma \ref{L4},
$$
M \left ( \left ( \partial T \right ) \mres{g_i} -  \partial \left ( T \mres{g_i} \right ) \right ) \leq \ Lip(g_i) 
M  \left (
 T \mres   \left ( \chi_{ \spt d g_i }  \sum_{j=1}^{2n} dw_j  \right )
\right ).
$$
Notice that
\begin{align*}
&M  \left (
 T \mres   \left ( \chi_{ \spt d g_i }  \sum_{j=1}^{2n} dw_j  \right )
\right )       = \sup_{\substack{ \norm{\omega}^*\leq 1, \\ \omega \in \mathcal{D}_{\mathbb{H}}^k(U)  }}
\int_{U} \langle   \sum_{j=1}^{2n} dw_j   \wedge \left ( \chi_{ \spt d g_i } \omega   \right )    \vert  \overrightarrow{T}  \rangle d\mu_T
\end{align*}
and denote $\omega'=\chi_{ \spt d g_i } \omega$. Then $\norm{\omega'}^*\leq 1$ and $\omega' \in \mathcal{D}_{\mathbb{H}}^k(U \cap  \spt d g_i ) $. Thus, since $ \spt d g_i \subseteq  \{ t < f < t+h \} $  by Lemma \ref{lemmag_i--faseII},
\begin{align*}
M  \left (
 T \mres   \left ( \chi_{ \spt d g_i }  \sum_{j=1}^{2n} dw_j  \right )
\right )  &=\sup_{\substack{ \norm{\omega'}^*\leq 1, \\ \omega' \in \mathcal{D}_{\mathbb{H}}^k(U)  }}
\int_{U \cap \spt dg_i} \langle   \sum_{j=1}^{2n} dw_j   \wedge \omega'    \vert  \overrightarrow{T}  \rangle d\mu_T\\
&\leq \sup_{\substack{ \norm{\omega'}^*\leq 1, \\ \omega' \in \mathcal{D}_{\mathbb{H}}^k(U)  }}
\int_{U \cap \{ t < f < t+h \} } \langle   \sum_{j=1}^{2n} dw_j   \wedge \omega'    \vert  \overrightarrow{T}  \rangle d\mu_T\\
&= \sup_{\substack{ \norm{\omega'}^*\leq 1, \\ \omega' \in \mathcal{D}_{\mathbb{H}}^k(U)  }}
\int_{U} \langle   \sum_{j=1}^{2n} dw_j   \wedge \chi_{ \{ t < f < t+h \} } \omega'    \vert  \overrightarrow{T}  \rangle d\mu_T\\
&= M  \left (
 T \mres   \left ( \chi_{ \{ t < f < t+h \} }  \sum_{j=1}^{2n} dw_j  \right )
\right ).
\end{align*}
Putting the pieces together, we get
\begin{align*}
M (\langle T,f,t+ \rangle)  \leq &    \liminf\limits_{h \rightarrow 0+} \lim_{i \to \infty}
 Lip(g_i)  M  \left (
 T \mres   \left ( \chi_{ \{ t < f < t+h \} }  \sum_{j=1}^{2n} dw_j  \right )
\right ).
\end{align*}
By Lemmas \ref{lemmagamma} and \ref{lemmag_i--faseII} again, $  \lim\limits_{i \to \infty}  Lip(g_i) = Lip(\gamma_h \circ f)   \leq \frac{Lip(f)}{h}  $, which gives
\begin{align*}
M (\langle T,f,t+ \rangle)  \leq &  \liminf\limits_{h \rightarrow 0+} \frac{Lip(f)}{h}   M  \left (
 T \mres   \left ( \chi_{ \{ t < f < t+h \} }  \sum_{j=1}^{2n} dw_j  \right )
\right ).
\end{align*}
\end{proof}


\noindent Finally we have all the instruments to prove the proposition.

\begin{proof}[Proof of Proposition \ref{next3properties}]
Consider the function $\gamma_h$ defined in Lemma \ref{lemmagamma} and the functions $g_i\in C^\infty (U,\mathbb{R})$  defined in Lemma \ref{lemmag_i--faseII} so that $ g_i \rightrightarrows   \gamma_h \circ f  $.  Then, by Lemma \ref{L5},
\begin{align*}
M (\langle T,f,t+ \rangle) & \leq   \liminf\limits_{h \rightarrow 0+} \frac{Lip(f)}{h}   M  \left (
 T \mres   \left ( \chi_{ \{ t < f <t+h \} }  \sum_{j=1}^{2n} dw_j  \right )
\right )\\
&=    \liminf\limits_{h \rightarrow 0+}  \frac{Lip(f)}{h} 
 \sup_{\substack{ \norm{\omega}^*\leq 1, \\ \omega \in \mathcal{D}_{\mathbb{H}}^k(U)  }}
    \left [  T \mres \left (  \chi_{\{ t < f < t+h \}}  \sum_{j=1}^{2n} dw_j \right )  \right ] (\omega)     \\
&=\liminf\limits_{h \rightarrow 0+}   \frac{Lip(f)}{h}  
 \sup_{\substack{ \norm{\omega}^*\leq 1, \\ \omega \in \mathcal{D}_{\mathbb{H}}^k(U)  }}
\left [  T \mres \left (  \chi_{\{ t < f < t+h \}}  \right )  \right ] \left (  \sum_{j=1}^{2n} dw_j \wedge \omega \right ) .
\end{align*}
Denote $\omega' =  \sum_{j=1}^{2n} dw_j \wedge \omega$, $\omega' \in  \mathcal{D}_{\mathbb{H}}^{k+1}(U) $ and consider
\begin{align*}
\norm{\omega'}^*=\norm{ \sum_{j=1}^{2n} dw_j \wedge \omega }^* 
=\sup_{\substack{ v \in   {\prescript{}{H}\bigwedge}_{k+1} (U), \\ \vert v \vert \leq 1  }}
     \langle    \sum_{j=1}^{2n} dw_j \wedge  \omega   \vert v \rangle  .
\end{align*}
By Definitions  \ref{HLambda} and \ref{masscomass}, $v$ is a simple $(k+1)$-vector, so we can write $v=\rho W_{i_1} \wedge \dots \wedge W_{i_{k+1}}$ with $\vert \rho \vert \leq 1$ and $1 \leq i_1 < \dots < i_{k+1} \leq 2n+1 $. We see that
\begin{align*}
\left \vert  \langle    \sum_{j=1}^{2n} dw_j \wedge  \omega  \vert v \rangle  \right \vert 
&\leq  \vert  \rho  \vert
\left \vert 
 \sum_{\sigma \in Sh(1,k+1)} \sgn(\sigma)
 \langle    \sum_{j=1}^{2n} dw_j   \vert W_{i_{\sigma(1)}} \rangle
\langle   \omega \vert    W_{i_{\sigma(2)}} \wedge \dots \wedge W_{i_{\sigma(k+1)}}     \rangle
\right \vert 
\end{align*}
where $Sh(1,k+1)$ is the set of $(1,k+1)$-shuffles. 
 Notice that
\begin{align*}
\left \vert 
 \langle    \sum_{j=1}^{2n} dw_j   \vert W_{i_{\sigma(1)}} \rangle
\right \vert 
\leq
\left \vert 
 \langle    dw_{i_{\sigma(1)}}  \vert W_{i_{\sigma(1)}} \rangle
\right \vert 
\leq 1
\end{align*}
and, since $\norm{\omega}^* \leq 1$,
\begin{align*}
\left \vert 
\langle   \omega \vert    W_{i_{\sigma(2)}} \wedge \dots \wedge W_{i_{\sigma(k+1)}}     \rangle
\right \vert 
\leq
\norm{\omega}^*   \left \vert   W_{i_{\sigma(2)}} \wedge \dots \wedge W_{i_{\sigma(k+1)}} \right \vert 
\leq 1.
\end{align*}
This means that
\begin{align*}
\left \vert  \langle    \sum_{j=1}^{2n} dw_j \wedge  \omega  \vert v \rangle  \right \vert 
&\leq \left \vert 
 \sum_{\sigma \in Sh(1,k+1)} \sgn(\sigma)
\right \vert 
\leq 1.
\end{align*}
because $\sgn(\sigma)$ changes at every step of the sum so the absolute value of the sum can be, at the end, only $0$ or $1$.
Finally we get
\begin{align*}
\norm{\omega'}^* \leq 1.
\end{align*}
Then
\begin{align*}
M (\langle T,f,t+ \rangle) & \leq   \liminf\limits_{h \rightarrow 0+}  \frac{Lip(f)}{h}  
 \sup_{\substack{ \norm{\omega}^*\leq 1, \\ \omega \in \mathcal{D}_{\mathbb{H}}^k(U)  }}
  \left [  T \mres \left (  \chi_{\{ t < f < t+h \}}  \right )  \right ] \left (  \sum_{j=1}^{2n} dw_j \wedge \omega \right )\\
&\leq  \liminf\limits_{h \rightarrow 0+}   \frac{Lip(f)}{h}
 \sup_{\substack{ \norm{\omega'}^*\leq 1, \\ \omega' \in \mathcal{D}_{\mathbb{H}}^{k+1}(U)  }}
   \left [  T \mres \left (  \chi_{\{ t < f < t+h \}}  \right )  \right ] (  \omega') \\
&\leq   \liminf\limits_{h \rightarrow 0+}     \frac{Lip(f)}{h}  M \left (  T \mres \{ t < f < t+h \}  \right ) \\
&=     \liminf\limits_{h \rightarrow 0+}     \frac{Lip(f)}{h}  \mu_T \left (   U \cap \{ t < f < t+h \}  \right ).
\end{align*}
by Proposition \ref{masstoint}. Then
\begin{align*}
M \left (  \langle T,f,t+ \rangle  \right )   \leq    \ {Lip}(f) \liminf\limits_{h \rightarrow 0+} \frac{1}{h} \mu_T \left (   U \cap \{ t < f < t+h \}  \right ).
\end{align*}
This proves property (4). The other two properties follow quickly. To prove property (5) we proceed as in 4.11 in \cite{MORGAN1}. Consider $F(t)=\mu_T \left (   U \cap \{  f < t \} \right )$, an increasing monotone function with derivative almost everywhere. 
\begin{align*}
&  \ {Lip}(f)  \mu_T \left (   U \cap \{ a < f < b \} \right ) \\
&=  \ {Lip}(f)  \left ( \mu_T \left (   U \cap \{  f < b \} \right ) -  \mu_T \left (   U \cap \{  f \leq a \} \right )     \right )\\
&=   \ {Lip}(f)  \left ( F( b) - \lim_{s\to a+} F( a )     \right )\\
&\geq   \ {Lip}(f) \int_a^b   F'(t) dt \\
&\geq \int_a^b M \left ( \langle T,f,t+ \rangle \right ) dt .
\end{align*}
where we apply property (4) to the last inequality; this proves property (5). Another way to finish is to observe that $t \to M \left ( \langle T,f,t+ \rangle \right )$ is measurable, which holds true because of the definitions of mass and slice, and conclude similarly. By Proposition \ref{masstoint} and since $T \in N_{\mathbb{H},k+1}(U)$, we have that $\mu_T \left (   U \cap \{ a < f < b \} \right ) < \infty$. Then, by property (5),
$$
M \left ( \langle T,f,t+ \rangle \right ) <\infty \quad  \text{for a.e. } t.
$$
Finally, by property (3) in Proposition \ref{first4properties} and by repeating the previous argument for $\partial T$,
$$
M \left ( \partial \langle T,f,t+ \rangle \right )  = M \left ( - \langle \partial T,f,t+ \rangle \right ) <\infty \quad  \text{for a.e. } t
$$
Thus property (6) holds.
\end{proof}


\subsection{The case $k=n$}

Note that lemmas \ref{L4} and \ref{L5} and Proposition \ref{next3properties} did not deal with the case $k=n$, the more challenging one. If $k=n$ we can consider $U \subseteq \mathbb{H}^n$ an open set, $T\in \mathcal{D}_{\mathbb{H},n+1}(U)$,  
 $[\omega ]_{I^{n}} \in \mathcal{D}^n_{\mathbb{H}}(U) = \frac{\Omega^n}{I^n}$,  and the functions $g_i \in C^\infty (U,\mathbb{R})$  defined in Lemma \ref{lemmag_i--faseII}. By Lemma \ref{L2}, then
\begin{align}\label{newbeginning}
\begin{aligned}
&\left [ \left ( \partial T \right ) \mres{g_i} -  \partial \left ( T \mres{g_i} \right ) \right ] \left  (  [\omega ]_{I^{n}}  \right )  \\
=&
T  \Big (    d^{(1)} g_i  \wedge \left ( \omega + \mathcal{L}(\omega) \wedge \theta \right )  +    d^{(n+1)} \left (   \left (   \mathcal{L}( g_i \omega) -   g_i \mathcal{L}( \omega)    \right )  \wedge \theta   \right )  \Big ) .
\end{aligned}
\end{align}

\noindent
The right hand side can be partially rewritten using the following lemma.

\begin{lem}\label{L3}
Let  $U \subseteq \mathbb{H}^n$ open,  $\omega \in \Omega^n$   and the functions $g_i \in C^\infty (U,\mathbb{R})$  defined in Lemma \ref{lemmag_i--faseII}. Also recall Notation \ref{notationL}.  Then
$$
\mathcal{L}( g_i \omega) -   g_i \mathcal{L}( \omega) 
= L^{-1}   \left ( - \left ( d^{(1)} g_i \wedge  \omega \right )_{\vert_{ {\prescript{}{}\bigwedge}^{n+1} \mathfrak{h}_1 }}  \right ) .
$$
\end{lem}
\begin{proof}
By the definition of  $\mathcal{L}$ and using twice the linearity of $L^{-1}$,
\begin{align*}
 \mathcal{L}( g_i \omega) &= L^{-1}   \left ( - \left ( d^{(n+1)} (g_i \omega ) \right )_{\vert_{ {\prescript{}{}\bigwedge}^{n+1} \mathfrak{h}_1 }}  \right ) \\
&= L^{-1}   \left ( - \left ( d^{(1)} g_i \wedge \omega + g_i d^{(n+1)} \omega  \right )_{\vert_{ {\prescript{}{}\bigwedge}^{n+1} \mathfrak{h}_1 }}  \right ) \\
&= L^{-1}   \left ( - \left ( d^{(1)} g_i \wedge \omega \right )_{\vert_{ {\prescript{}{}\bigwedge}^{n+1} \mathfrak{h}_1 }}  - \left ( g_i d^{(n+1)} \omega  \right )_{\vert_{ {\prescript{}{}\bigwedge}^{n+1} \mathfrak{h}_1 }}  \right ) \\
&= L^{-1}   \left ( - \left ( d^{(1)} g_i \wedge \omega \right )_{\vert_{ {\prescript{}{}\bigwedge}^{n+1} \mathfrak{h}_1 }} \right )    +    
g_i L^{-1}   \left (  - \left (  d^{(n+1)} \omega  \right )_{\vert_{ {\prescript{}{}\bigwedge}^{n+1} \mathfrak{h}_1 }}  \right )\\
&=L^{-1}   \left ( - \left ( d^{(1)} g_i \wedge \omega \right )_{\vert_{ {\prescript{}{}\bigwedge}^{n+1} \mathfrak{h}_1 }} \right )    +    
g_i  \mathcal{L}(  \omega) . 
\end{align*}
\end{proof}

\noindent
Furthermore, one can observe that the right hand side of equation (\ref{newbeginning}) is not null because of the following lemma. This observation was not needed in the cases of $k\neq n$ as the definition of $\mathcal{D}_{\mathbb{H}^*}(U)$ didn't change between $k$ and $k+1$, making the step immediate.

\begin{lem}\label{appartenenza}
Consider an open set $U \subseteq \mathbb{H}^n$, $T\in \mathcal{D}_{\mathbb{H},n+1}(U)$, $\omega \in \Omega^n$  
 and the functions $g_i \in C^\infty (U,\mathbb{R})$  defined in Lemma \ref{lemmag_i--faseII}. Also recall Notation \ref{notationL}. Then
\begin{align}\label{condJ}
    d^{(1)} g_i  \wedge \left ( \omega + \mathcal{L}(\omega) \wedge \theta \right )  +    d^{(n+1)} \left (   \left (   \mathcal{L}( g_i \omega) -   g_i \mathcal{L}( \omega)    \right )  \wedge \theta   \right )  \in J^{n+1}.
\end{align}
\end{lem}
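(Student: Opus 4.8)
The plan is to avoid verifying directly the two defining conditions $\alpha\wedge\theta=0$ and $\alpha\wedge d\theta=0$ of $J^{n+1}$, and instead to recognise the left-hand side of (\ref{condJ}) as a combination of forms already known to sit in $J^{n+1}$. Write $\Phi$ for the left-hand side of (\ref{condJ}). The computation already carried out in the proof of Lemma \ref{L2} (case $k=n$) establishes the identity
$$
\Phi = D[g_i\omega]_{I^n} - g_i\, D[\omega]_{I^n},
$$
where $D$ is the second-order Rumin operator of Definition \ref{complexHn}. Thus the whole task reduces to showing that the right-hand side belongs to $J^{n+1}$, and no new calculation with $\mathcal{L}$ or $L$ is needed.

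First I would invoke Definition \ref{complexHn}: by construction of the Rumin complex, $D$ maps $\frac{\Omega^n}{I^n}$ into $J^{n+1}$. Since $\omega\in\Omega^n$ gives $[\omega]_{I^n}\in\frac{\Omega^n}{I^n}$ and likewise $g_i\omega\in\Omega^n$ gives $[g_i\omega]_{I^n}\in\frac{\Omega^n}{I^n}$, this yields at once $D[g_i\omega]_{I^n}\in J^{n+1}$ and $D[\omega]_{I^n}\in J^{n+1}$.

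Second, I would observe that $J^{n+1}$ is stable under multiplication by the smooth scalar function $g_i$. This is the only point requiring an argument, and it follows immediately from the pointwise, linear nature of the defining conditions: if $\beta\in J^{n+1}$, i.e. $\beta\wedge\theta=0$ and $\beta\wedge d\theta=0$, then $(g_i\beta)\wedge\theta=g_i(\beta\wedge\theta)=0$ and $(g_i\beta)\wedge d\theta=g_i(\beta\wedge d\theta)=0$, so $g_i\beta\in J^{n+1}$. Applying this with $\beta=D[\omega]_{I^n}$ gives $g_i\, D[\omega]_{I^n}\in J^{n+1}$.

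Finally, since $J^{n+1}$ is a vector space, the difference $D[g_i\omega]_{I^n}-g_i\, D[\omega]_{I^n}=\Phi$ lies in $J^{n+1}$, which is exactly the claim. The main (and essentially only) conceptual obstacle is the first step, namely identifying $\Phi$ with $D[g_i\omega]_{I^n}-g_i\, D[\omega]_{I^n}$; but this identity is produced verbatim in the computation of Lemma \ref{L2}, so it may simply be quoted. An alternative route would be a direct check of $\Phi\wedge\theta=0$ and $\Phi\wedge d\theta=0$ using the definition of $\mathcal{L}$ and the isomorphism $L$, but this is considerably more laborious, and I would fall back on it only if the identification with $D$ were unavailable.
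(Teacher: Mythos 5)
Your proof is correct, but it takes a genuinely different and more economical route than the paper's. The paper verifies the two defining conditions of $J^{n+1}$ by hand: it splits into cases according to whether $\omega$ has components not containing $\theta$, computes $\mathcal{L}(g_i\omega)-g_i\mathcal{L}(\omega)$ explicitly as the $L^{-1}$-preimage $\gamma$ of $-\left(dg_i\wedge\omega\right)_{\vert_{{\prescript{}{}\bigwedge}^{n+1}\mathfrak{h}_1}}$, checks $\theta\wedge\Phi=0$ directly, and then obtains $d\theta\wedge\Phi=0$ by differentiating that identity and invoking Observation 2.1.11 in \cite{GClicentiate} (i.e.\ that $d\left(\omega+\mathcal{L}(\omega)\wedge\theta\right)\in J^{n+1}$). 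You instead quote the identity $\Phi=D[g_i\omega]_{I^n}-g_iD[\omega]_{I^n}$, which is indeed displayed verbatim in the $k=n$ case of the proof of Lemma \ref{L2}, and then use only that $D$ lands in $J^{n+1}$ (Definition \ref{complexHn}) and that $J^{n+1}$, being cut out by the pointwise linear conditions $\alpha\wedge\theta=0$ and $\alpha\wedge d\theta=0$, is a module over $C^\infty(U)$ and in particular closed under multiplication by $g_i$ and under differences. All steps are sound, and note that your argument and the paper's ultimately rest on the same external input — that $D[\alpha]_{I^n}\in J^{n+1}$, applied by you to both $[\omega]_{I^n}$ and $[g_i\omega]_{I^n}$ — so you are not assuming anything the paper does not already use. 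What your approach gives up is the explicit intermediate formula $\mathcal{L}(g_i\omega)-g_i\mathcal{L}(\omega)=\gamma$ with $\gamma\wedge d\theta=-\left(dg_i\wedge\omega\right)_{\vert_{{\prescript{}{}\bigwedge}^{n+1}\mathfrak{h}_1}}$, which the paper's computation produces along the way and which feeds into the rewriting of equation (\ref{2piecesT}); but that formula is established independently in Lemma \ref{L3}, so nothing downstream is lost.
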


\begin{proof}
Consider 
$$
\omega = \sum_{1\leq l_1 \leq \dots \leq l_n \leq 2n+1 } \omega_{ l_1  \dots  l_n} dw_{l_1} \wedge \dots \wedge dw_{l_n}  
\quad \text{and} \quad
d g_i = \sum_{j=1}^{ 2n+1 } W_j g_i dw_j.  
$$
Assume first that 
$\omega$ has some components without $\theta$. We compute
\begin{align*}
 d g_i \wedge \omega
=& \sum_{j=1}^{ 2n+1 }  \sum_{1\leq l_1 \leq \dots \leq l_n \leq 2n+1 } 
 W_j g_i  \omega_{ l_1  \dots  l_n} 
 dw_j  \wedge  dw_{l_1} \wedge \dots  \wedge dw_{l_n} 
\end{align*}
and
\begin{align*}
 -\left ( d g_i \wedge \omega \right )_{\vert_{{\prescript{}{}\bigwedge}^{n+1} \mathfrak{h}_1 }}
=& - \sum_{j=1}^{ 2n }  \sum_{1\leq l_1 \leq \dots \leq l_n \leq 2n } 
 W_j g_i  \omega_{ l_1  \dots  l_n} 
 dw_j  \wedge  dw_{l_1} \wedge \dots  \wedge dw_{l_n} .
\end{align*}
Notice that the $dw_{l_m}$'s are $n$ different basis elements of $\Omega^1$ and they always have their counterpart $dw_{l_m+n}$ among the $ dw_j $'s, since $j=1,\dots,2n$. Hence we can write 
\begin{align*}
 -\left ( d g_i \wedge \omega \right )_{\vert_{{\prescript{}{}\bigwedge}^{n+1} \mathfrak{h}_1 }}
= - \sum_{j=1}^{ n }  dw_j  \wedge  dw_{j+n} \wedge \gamma
= d\theta \wedge \gamma
=\gamma \wedge d\theta,
\end{align*}
where $\gamma \in  {\prescript{}{}\bigwedge}^{n-1} \mathfrak{h}_1$. It follows that
\begin{align*}
 \mathcal{L}( g_i \omega) -   g_i \mathcal{L}( \omega)
=& L^{-1} \left ( -\left ( d g_i \wedge \omega \right )_{\vert_{{\prescript{}{}\bigwedge}^{n+1} \mathfrak{h}_1 }} \right )  =\gamma.
\end{align*}
Next,
\begin{align}\label{thetawedge}
\begin{aligned}
& \theta \wedge \big [
 d g_i  \wedge \left ( \omega + \mathcal{L}(\omega) \wedge \theta \right )  +    d \left (   \left (   \mathcal{L}( g_i \omega) -   g_i \mathcal{L}( \omega)    \right )  \wedge \theta   \right ) 
 \big ] \\
=&
\theta \wedge   d g_i  \wedge \left ( \omega + \mathcal{L}(\omega) \wedge \theta \right ) 
 + 
\theta \wedge   d \left (   \left (   \mathcal{L}( g_i \omega) -   g_i \mathcal{L}( \omega)    \right )  \wedge \theta   \right ) \\
=&
\theta \wedge   d g_i  \wedge \omega 
 + 
\theta \wedge      \left (   \mathcal{L}( g_i \omega) -   g_i \mathcal{L}( \omega)    \right )  \wedge d\theta   \\
=&
\theta \wedge  \left ( d g_i \wedge \omega \right )_{\vert_{{\prescript{}{}\bigwedge}^{n+1} \mathfrak{h}_1 }}
 + 
\theta \wedge      \gamma \wedge d\theta   \\
=&
\theta \wedge  \left ( d g_i \wedge \omega \right )_{\vert_{{\prescript{}{}\bigwedge}^{n+1} \mathfrak{h}_1 }}
 -
\theta \wedge     \left ( d g_i \wedge \omega \right )_{\vert_{{\prescript{}{}\bigwedge}^{n+1} \mathfrak{h}_1 }}\\
=&0.
\end{aligned}
\end{align}
This proves the first condition for belonging to $J^{n+1}$. For the second condition, we apply the operator $d$ to (\ref{thetawedge}) and get
\begin{align*}
0=
& d \big [ \theta \wedge \left [
 d g_i  \wedge \left ( \omega + \mathcal{L}(\omega) \wedge \theta \right )  +    d \left (   \left (   \mathcal{L}( g_i \omega) -   g_i \mathcal{L}( \omega)    \right )  \wedge \theta   \right ) 
 \right ]  \big ]\\
=& d  \theta \wedge  \left [
 d g_i  \wedge \left ( \omega + \mathcal{L}(\omega) \wedge \theta \right )  +   
 d \left (   \left (   \mathcal{L}( g_i \omega) -   g_i \mathcal{L}( \omega)    \right )  \wedge \theta   \right ) 
 \right ] \\ 
&+ \theta \wedge d \left [
 d g_i  \wedge \left ( \omega + \mathcal{L}(\omega) \wedge \theta \right )  +    d \left (   \left (   \mathcal{L}( g_i \omega) -   g_i \mathcal{L}( \omega)    \right )  \wedge \theta   \right ) 
 \right ] \\
=&  d  \theta \wedge \left [
 d g_i  \wedge \left ( \omega + \mathcal{L}(\omega) \wedge \theta \right )  +    d \left (   \left (   \mathcal{L}( g_i \omega) -   g_i \mathcal{L}( \omega)    \right )  \wedge \theta   \right ) 
 \right ] \\
&+  \theta \wedge   d g_i  \wedge d \left ( \omega + \mathcal{L}(\omega) \wedge \theta \right )  .
\end{align*}
We know by Observation 2.1.11 in \cite{GClicentiate} 
 that $d \left ( \omega + \mathcal{L}(\omega) \wedge \theta \right ) \in J^{n+1}$, meaning that
$$
 \theta \wedge d \left ( \omega + \mathcal{L}(\omega) \wedge \theta \right )  =0 .
$$
Then we verify the second condition by concluding that
$$
d  \theta \wedge  \big [
 d g_i  \wedge \left ( \omega + \mathcal{L}(\omega) \wedge \theta \right )  +    d \left (   \left (   \mathcal{L}( g_i \omega) -   g_i \mathcal{L}( \omega)    \right )  \wedge \theta   \right ) 
 \big ] =0.
$$
Assume now that   $\omega$ has no components without $\theta$. Then $\omega = \theta \wedge \beta$, $\beta \in  \Omega^{n-1} $ and, trivially by definition of $ \mathcal{L}$, $ \mathcal{L}(\omega)=0= \mathcal{L}(g_i \omega)$. It follows that condition \ref{condJ} is reduced to prove that $ d g_i  \wedge  \omega 
 \in J^{n+1}$. We see immediately that
\begin{align*}
\theta  \wedge  d g_i  \wedge  \omega 
=0
\end{align*}
and, applying $d$, that
\begin{align*}
0=& d \big ( \theta \wedge  d g_i  \wedge  \omega   \big ) \\
=& d  \theta \wedge d g_i  \wedge  \omega   + \theta \wedge d g_i  \wedge d  \omega  .
\end{align*}
We know, again by Observation 2.1.11 in \cite{GClicentiate}, 
 that $ d\omega=d \left ( \omega + \mathcal{L}(\omega) \wedge \theta \right )  \in J^{n+1} $ and so
\begin{align*}
 \theta \wedge d g_i  \wedge d  \omega    =0 .
\end{align*}
Then
\begin{align*}
d  \theta \wedge  d g_i  \wedge  \omega    =0 .
\end{align*}
This completes the proof.
\end{proof}

\noindent
From the discussion in section \ref{knotn}, it is clear that the difficulty of the case with $k=n$ lies in having 
 an inequality of the kind of property (4) in Proposition \ref{next3properties}. This means that, for $T\in N_{\mathbb{H},n+1}(U)$, $f \in Lip(U,\mathbb{R})$ and $t \in \mathbb{R}$, we wish to estimate $M \left ( \langle T,f,t+ \rangle \right )$ from above with a quantity including $ \liminf\limits_{h \rightarrow 0+} \frac{1}{h} \mu_T \left (   U \cap \{ t < f < t+h \}  \right )$.
As a start, by Lemma \ref{faseIII}, we already know that
\begin{align*}
M (\langle T,f,t+ \rangle) \leq &  \liminf\limits_{h \rightarrow 0+}  \lim_{i \to \infty}
M  \left ( 
 \left ( \partial T \right ) \mres{g_i} -  \partial \left ( T \mres{g_i} \right ) 
\right ).
\end{align*}
where, by Lemmas \ref{L2} and \ref{L3}, for $\omega \in \Omega^n$,
\begin{align}
\begin{aligned}\label{2piecesT}
\left [ \left ( \partial T \right ) \mres{g_i} -  \partial \left ( T \mres{g_i} \right ) \right ]  ([\omega ]_{I^{n}})  
=&
T  \Bigg (  d^{(1)} g_i  \wedge \left ( \omega + \mathcal{L}(\omega) \wedge \theta \right ) \\
&+   d^{(n+1)} \left (  
L^{-1} \left ( - \left ( d^{(1)} g_i \wedge  \omega \right )_{\vert_{ {\prescript{}{}\bigwedge}^{n+1} \mathfrak{h}_1 }}  \right )
  \wedge \theta \right ) \Bigg ) .
\end{aligned}
\end{align}
To proceed from here, we would need some results to replace lemmas \ref{L4} and \ref{L5}, which are currently missing. The case $k=n$ is the subject of ongoing research work.







\bibliography{Bibliography_United_Thesis_190520} 
\bibliographystyle{abbrv}

\end{document}